\documentclass[11pt]{amsart}
\usepackage[T1]{fontenc}
\usepackage{lmodern,amsmath,amssymb,amsthm,mathtools}
\usepackage[margin=1in]{geometry}
\usepackage{enumitem,microtype}
\usepackage{xcolor}

\numberwithin{equation}{section}
\usepackage[colorlinks=true,linkcolor=blue,citecolor=blue,urlcolor=blue]{hyperref}

\setlist[enumerate]{label=\textup{(\arabic*)},leftmargin=*,itemsep=4pt}
\newtheorem{theorem}{Theorem}[section]
\newtheorem{lemma}[theorem]{Lemma}
\newtheorem{proposition}[theorem]{Proposition}
\newtheorem{corollary}[theorem]{Corollary}
\theoremstyle{definition}
\newtheorem{definition}[theorem]{Definition}
\newtheorem{remark}[theorem]{Remark}
\newcommand{\sa}{\mathrm{sa}}
\newcommand{\baupre}[1]{\xrightarrow[#1]{\mathrm{b.a.u.}_*}}
\newcommand{\bau}[1]{\xrightarrow[#1]{\mathrm{b.a.u.}}}

\newcommand{\uw}{\mathop{\longrightarrow}\limits^{\mathrm{w^*}}}
\newcommand{\Tr}{\operatorname{Tr}} 
\newcommand{\supp}{\operatorname{s}}

\newcommand{\wstar}{\mathrm{weak}^*}
\newcommand{\mPhi}{\mathfrak m_\Phi}

\newcommand{\one}{\mathbf 1}
\newcommand{\Omb}{(g^{\frac{1}{2}} \varphi g^{\frac{1}{2}})}

\title{Quantum integrals II: Dominated and almost uniform convergence}
\author{David P. Blecher}
\address{Department of Mathematics, University of Houston, Houston, TX
77204-3008, USA}
\email[David P. Blecher]{dpbleche@central.uh.edu}

\date{November 1, 2026}
\begin{document}

\begin{abstract} One main aim of our paper is to 
 formulate noncommutative versions of the Lebesgue dominated convergence theorem in the setting of quantum integration theory on von Neumann algebras. Our starting point is a reformulation of classical dominated convergence in terms of weak* convergence. 
Using our recent sequential-continuity theorem, we obtain
dominated convergence formulations for sequentially normal
states and, under domination by a fixed bounded integrable
majorant, for sequentially normal weights.
 Integrable, possibly unbounded, majorants are treated first in the tracial setting, then for arbitrary von Neumann algebras using Haagerup’s \(L^1\) space, and finally using normal semifinite weights and their GNS representations. For any faithful normal semifinite weight \(\varphi\), weak* convergence of norm-bounded sequences is characterized by `integrating  against' bounded positive integrable majorants. We also study noncommutative almost uniform and local almost uniform convergence, and make the relationship with the order convergence results of Chetcuti et al.\  explicit. In particular, for norm-bounded sequences, local \(\varphi\)-bilaterally almost uniform convergence implies weak* convergence and
\(\|h^{1/2}(x_n-x)h^{1/2}\|_1\to0\)
for every positive Haagerup \(L^1\) density \(h\), without assuming \(\sigma\)-finiteness. We also prove a dominated convergence theorem in the predual: fixed positive order domination and Petz’s almost uniform convergence imply convergence in the predual norm. 
We establish implications and equivalences among these convergence notions, connecting dominated convergence with tracial and nontracial integration. \end{abstract}

\maketitle 

\section{Introduction} 

Let $M$ be a complex von Neumann algebra with canonical predual
$M_*$. A state $\varphi$ on $M$ is \emph{countably additive on
projections} if
\[
 \varphi\left(\sum_{n=1}^{\infty}p_n\right)
   =\sum_{n=1}^{\infty}\varphi(p_n)
\]
for every sequence of mutually orthogonal projections in $M$.
It is \emph{sequentially normal}, or \emph{$\sigma$-normal}, if
$\varphi(a_n)\uparrow\varphi(a)$ whenever $0\leq a_n\uparrow a$ in
$M$. Normality requires the analogous assertion for bounded
increasing nets, while \emph{complete additivity on projections}
requires the displayed identity to hold for arbitrary families
of mutually orthogonal projections. (For background on quantum measures and states see Hamhalter's excellent
text  \cite{Hamhalter}.) 
For positive functionals,
complete additivity on projections, normality, and weak*
continuity are equivalent \cite{Tak1}. These functionals form the
positive cone $(M_*)_+$, whose complex linear span is the entire
predual $M_*$. The following sequential analogue was recently proved
in \cite{BLd}. 

\begin{theorem}\label{main}
For a state $\varphi$ on a von Neumann algebra $M$, the following
are equivalent:
\begin{enumerate}[label=\textup{(\roman*)}]
\item $\varphi$ is countably additive on projections.
\item $\varphi$ is sequentially normal.
\item $\varphi$ is sequentially weak$^*$ continuous: if $a_n\to a$
weak$^*$ in $M$, then $\varphi(a_n)\to\varphi(a)$.
\end{enumerate}
In any faithful normal concrete representation of $M$, these
conditions are also equivalent to sequential continuity for the
weak and strong operator topologies.
\end{theorem}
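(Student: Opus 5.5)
The implications (iii)$\Rightarrow$(ii)$\Rightarrow$(i) are routine, and I would dispose of them first. If $0\le a_n\uparrow a$ in $M$ with $a_n$ bounded, then $a_n\to a$ strongly, hence weak$^*$ (Vigier), so (iii) gives $\varphi(a_n)\uparrow\varphi(a)$. If $(p_n)$ are mutually orthogonal projections, the partial sums $\sum_{k\le n}p_k$ increase to $\sum_k p_k$, so (ii) gives countable additivity. The same reasoning handles the final sentence of the theorem. In a faithful normal representation, a WOT- or SOT-convergent bounded sequence is weak$^*$ convergent. Conversely, every weak$^*$ convergent sequence is bounded by uniform boundedness, hence WOT-convergent. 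So sequential WOT continuity is equivalent to (iii), and sequential SOT continuity lies between (ii) and (iii) via Vigier again.

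The real content is (i)$\Rightarrow$(iii). I would first reduce it to self-adjoint, weak$^*$ null, norm-bounded sequences. This uses the observation that weak$^*$ convergent sequences are bounded, linearity of $\varphi$, and splitting into real and imaginary parts. I would then treat the $\sigma$-finite case. There every orthogonal family of nonzero projections is countable, so (i) upgrades to complete additivity, hence normality by \cite{Tak1}, and (iii) is immediate. The same argument shows that $\varphi$ is normal on $pMp$ for every $\sigma$-finite projection $p$.

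For general $M$ I would decompose $\varphi=\psi+\rho$ into its normal part $\psi$ and a singular part $\rho$ (Takesaki's decomposition of positive functionals). The part $\rho$ is again countably additive on projections and vanishes on every $\sigma$-finite projection. It therefore suffices to show that $\rho(a_n)\to0$ whenever $a_n\to0$ weak$^*$ with $\|a_n\|\le C$.

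The model case is $\ell^\infty(I)$ with $\rho$ integration against a countably additive but not completely additive measure. Such measures exist only when $|I|$ is at least a measurable cardinal, so $\rho=0$ unless that happens. In this model, weak$^*$ null bounded sequences are exactly bounded pointwise null sequences, and the classical dominated convergence theorem for countably additive measures finishes the argument. For general $M$ I would try to reduce to this commutative picture. I would use the Gleason-type description of $\rho$ via Bunce--Wright/Christensen--Yeadon together with the Eilers--Horst/Dorofeev--Sherstnev analysis of $\sigma$-additive measures. The aim is to show that $\rho$ factors through a countably complete ultrafilter-type construction on an orthogonal decomposition of the identity into $\sigma$-finite projections $\{e_i\}$.

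The main obstacle is exactly this last step: controlling $\rho(a_n)$ when the $a_n$ do not lie in a common abelian subalgebra adapted to $\{e_i\}$. What I would try first is a truncation and estimation argument. Given $\varepsilon>0$, I would use countable additivity to choose a countable subfamily of the $e_i$ that carries $\rho$ up to $\varepsilon$. I would then handle the off-diagonal terms $e_iae_j$ by the Cauchy--Schwarz inequality $|\rho(x^*y)|^2\le\rho(x^*x)\rho(y^*y)$. On each $\sigma$-finite corner, weak$^*$ convergence gives pointwise control, and countable completeness of the ultrafilter should then pass the limit through.
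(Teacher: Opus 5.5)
The paper does not prove Theorem~\ref{main}; it quotes it from \cite{BLd}, so your outline has to stand on its own. Several parts are fine:
the implications (iii)$\Rightarrow$(ii)$\Rightarrow$(i);
the treatment of the weak and strong operator topologies;
the reduction to bounded selfadjoint weak$^*$-null sequences;
the $\sigma$-finite case;
the passage to the singular part $\rho$.
For the last, $\rho$ is countably additive as a difference of countably additive functionals. It vanishes on every $\sigma$-finite projection $p$, because $\rho|_{pMp}$ is then normal and singular, hence zero. However, the non-$\sigma$-finite case of (i)$\Rightarrow$(iii) is the entire content of the theorem. You leave it as a plan, and its concrete steps fail.

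\textbf{The truncation step is impossible.} A countable sum of $\sigma$-finite projections is again $\sigma$-finite: use $\sum_k 2^{-k}\omega_k$ with $\omega_k$ faithful normal states on the summands. Hence $\rho\bigl(\sum_k e_{i_k}\bigr)=0$ for every countable subfamily. No countable subfamily carries any part of $\rho$ unless $\rho=0$, so your $\varepsilon$-truncation cannot be carried out.

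\textbf{Blockwise control is empty.} Cauchy--Schwarz gives $|\rho(e_iae_j)|^2\le\rho(e_iaa^*e_i)\,\rho(e_j)=0$ for all $i,j$. So $\rho$ annihilates every diagonal and off-diagonal block of every $a_n$, and weak$^*$ convergence of $e_ia_ne_i$ in each corner says nothing about $\rho(a_n)$. The whole difficulty is that $\rho$ does not commute with uncountable block sums, while countable additivity only permits countable interchanges.

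\textbf{The missing idea.} You need a device that turns the operator-valued blocks into scalar functions on the index set $I$. Dominated convergence for the $\sigma$-additive measure $E\mapsto\rho\bigl(\sum_{i\in E}e_i\bigr)$ could then be applied to those functions. One example would be a representation of $\rho$, on the algebra reduced by the $e_i$, as an integral over $I$ of normal functionals on the corners. You do not derive any such representation from the cited Gleason-type and $\sigma$-additive-measure results.

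\textbf{The ultrafilter picture cannot suffice.} Suppose there is a real-valued measurable cardinal $\le|I|$ but no measurable one. Then every nonzero $\sigma$-additive measure on the subsets of $I$ vanishing on singletons is atomless, so no countably complete nonprincipal ultrafilter lies behind it. (In your model case, the correct threshold is likewise a real-valued measurable cardinal.)

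The off-diagonal terms can at least be removed by adapting the $\sigma$-finite decomposition to the sequence. The countable-group construction in the proof of Proposition~\ref{gm-order-local-au} (after \cite{BHK}), applied to the selfadjoint $a_n$, yields orthogonal $\sigma$-finite projections with sum $1$ commuting with every $a_n$. The disintegration step still remains.
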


Sequential weak* convergence plays a large role in many branches of mathematics and theoretical statistics.
For example it is often crucially related to  pointwise convergence or a.e.\ convergence in function theoretic settings,
such as Hardy spaces.
Sequential weak* convergence often admits a concrete function-theoretic description; for example, on norm-bounded sequences in \(H^\infty(\mathbb D)\), it is equivalent to pointwise convergence on the disk. 
As we observed in the introduction to \cite{BLd}, the classical
Lebesgue dominated convergence theorem (LDCT) may be viewed as a
statement about sequential weak* convergence. Indeed suppose that $\mu$
is a localizable measure (this restriction is removed in
Section~\ref{class}).  The classical LDCT is equivalent to the assertion
that every norm-bounded sequence in $M=L^\infty(X,\mu)$ that
converges a.e.\ also converges weak*. Indeed, if $(f_n)$ is such a
sequence with a.e.\ limit $f$, then LDCT gives
\[
 \int_X f_n g\,d\mu \longrightarrow \int_X fg\,d\mu
 \qquad \bigl(g\in L^1(X,\mu)\bigr),
\]
which is precisely weak* convergence. Conversely, suppose that (not necessarily essentially bounded) 
$f_n\to f$ a.e.\ and $|f_n|\leq g$ a.e.\ for some nonnegative
$g\in L^1(X,\mu)$. Define $h_n = 0$ when $g = 0$ and otherwise $h_n = f_n/g$; and 
define $h$ similarly. Then $(h_n)$ lies in the
unit ball of $M$ and converges a.e.\ to $h$. If $h_n\to h$ weak*,
then
\[
 \int_X f_n\,d\mu
   =\int_X h_n g\,d\mu
   \longrightarrow
   \int_X hg\,d\mu
   =\int_X f\,d\mu,
\]
since integration against $g\,d\mu$ is a weak* continuous
functional. This gives the conclusion of LDCT.   Indeed applying the same bounded convergence assertion to $|h_n-h|$
also gives
\[
 \|f_n-f\|_1
   =\int_X g|h_n-h|\,d\mu\longrightarrow0.
\]

The sequential-continuity theorem of \cite{BLd}, recalled above,
provides our starting point. We apply it for example  to obtain dominated
convergence in the noncommutative (or nc for short) case for sequentially normal weights:
Proposition~\ref{bounded-weight} shows that, if $\Phi$ is such
a weight, $g\in M_+$ satisfies $\Phi(g)<\infty$, and
$-g\le f_n\le g$ with $f_n\to f$ weak*, then
$\Phi(f_n)\to\Phi(f)$. Here the integrable majorant is bounded,
while the weight need not be normal. We subsequently treat
possibly unbounded majorants in the tracial and Haagerup
$L^1$ settings, and through the GNS representation of a
normal semifinite weight. (Instead of Haagerup $L^1$-spaces one could often use the standard form, if preferred.  Sometimes however this would make statements of results more cumbersome.) 

We often state results in terms of a fixed faithful normal semifinite weight 
$\varphi$. 
We recall (see e.g.\ \cite[Corollary 14.2]{BLd}) that a faithful $\sigma$-normal weight is actually normal.
Thus most of the many hypotheses in our paper using faithful normal weights could equivalently be stated using
$\sigma$-normality with no changes in the proofs. 
For such a weight, 
Theorem~\ref{ldct-merged:criterion} for example,  identifies a particularly
simple family of tests for weak* convergence. For norm-bounded
sequences,
\[
 x_n\longrightarrow x\ \text{weak*}
 \quad\Longleftrightarrow\quad
 \varphi(g^{1/2}x_ng^{1/2})
 \longrightarrow
 \varphi(g^{1/2}xg^{1/2})
 \quad
 \bigl(g\in M_+,\ \varphi(g)<\infty\bigr).
\]
Thus bounded positive integrable majorants suffice. We also
give formulations using positive affiliated majorants.

The final sections concern noncommutative almost uniform
and local almost uniform convergence. Classically, local
almost uniform convergence means almost uniform convergence
on every measurable set of finite measure. On any measure
space, pointwise almost-everywhere convergence implies local
almost uniform convergence by Egoroff's theorem; the converse
holds when the measure is semifinite. Thus, for localizable
$\mu$, the classical LDCT is equivalent to the assertion that
every norm-bounded sequence in $L^\infty(X,\mu)$ which
converges locally almost uniformly also converges weak*
to the same limit. This provides the motivation for the
noncommutative local convergence results below.
 Our local almost uniform convergence results give concrete
hypotheses for convergence of integrals.
Theorem~\ref{ldct-addon:theorem} shows that, for norm-bounded
sequences, local $\varphi$-b.a.u.\ convergence implies weak*
convergence and, more strongly,
\[
 \|h^{1/2}(x_n-x)h^{1/2}\|_1\longrightarrow0
 \qquad\bigl(h\in L^1(M)_+\bigr),
\]
where $L^1(M)$ is Haagerup's space. No $\sigma$-finiteness
assumption on $M$ is required. Local $\varphi$-a.u.\
convergence gives the stronger operator-topology conclusion
of $\sigma$-strong convergence.  Many of the just mentioned results  are generalizations
of the classical dominated convergence theorem.

The one-sided local convergence conclusion is also a consequence of
the order-convergence results recorded in
\cite[\S2.2, p.~103, and Proposition 2.7]{CHW}: order convergence
implies $\sigma$-strong, hence weak*, convergence, without
$\sigma$-finiteness. The identification of our bounded local a.u.\
condition with order convergence of $|x_n-x|$ is proved in
Appendix~\ref{app:order-local}. The precise local-weight formulation which we give 
is not stated in \cite{CHW}.

We also establish a dominated convergence theorem directly in the
predual $M_*$ of an arbitrary von Neumann algebra
(Theorem~\ref{predual:dct}). For selfadjoint normal functionals dominated
in order by a fixed positive normal functional, the almost uniform
convergence of Definition~\ref{predual:definition} implies convergence
in the predual norm, and hence convergence of their values at the
identity.  In the commutative case, the convergence hypothesis
is equivalent to almost everywhere convergence, and the theorem
recovers classical $L^1$ dominated convergence.

Finally, Proposition~\ref{lm-bounded-comparison} characterizes
$\sigma$-strong convergence of norm-bounded sequences by
one-sided local convergence in measure.
The corresponding symmetric condition characterizes
$\sigma$-strong* convergence. In particular, these bounded
sequential convergence conditions are independent of the
chosen faithful normal semifinite weight.

For the readers convenience, we end this introduction with a summary of the implications between various notions of sequential convergence 
of functions on a measure space $(X,\mu)$. Later in our paper we will give analogues of the above in the noncommutative setting (they are summarized
at the end of our paper).  Many other authors have established similar results for special classes of von Neumann algebras
(see e.g.\ \cite{BLS} and references therein, such as the papers of Paszkiewicz cited there).   Namely, almost uniform
convergence implies a.e.\ convergence, indeed the latter is equivalent to local almost uniform
convergence if the measure is semifinite.  
Thus, for a
semifinite measure $\mu$, the whole-sequence implications are
\[
 \text{a.u.}
 \ \Longrightarrow\
 \text{a.e.}
 \ \Longleftrightarrow\
 \text{local a.u.}
 \ \Longrightarrow\
 \text{local convergence in measure}.
\]
The first implication is an equivalence when $\mu$ is finite. 
If the sequence and its limit are dominated in absolute value
by a common function in $L^1(\mu)_+$, then a.u., a.e., and local
a.u.\ convergence are equivalent without any semifiniteness
assumption on the measure.
If $\mu$ is $\sigma$-finite, local convergence in measure implies existence of a
subsequence converging a.e.,
equivalently locally almost uniformly, to the same limit.
For norm-bounded sequences in $L^\infty(X,\mu)$ with $\mu$
localizable, local convergence in measure is equivalent to strong,
strong*, $\sigma$-strong, and $\sigma$-strong* convergence in any
faithful normal representation. These conditions imply weak
operator convergence, which is equivalent to weak* convergence
on norm-bounded sequences.

\section{The classical formulations} \label{class}

Let $(X,\Sigma,\mu)$ be a measure space, not necessarily localizable. Boundedness of a sequence in
$L^\infty(\mu)$ of course means  a uniform essential bound. For an arbitrary
measure space we write $\sigma(L^\infty,L^1)$ explicitly; we do not assume
without qualification that $(L^1)^*=L^\infty$, as is true in the localizable case.

\begin{definition}
A positive functional $\psi$ on a monotone sequentially complete $C^*$-algebra
is \emph{sequentially normal} if
\[
 0\le a_n\uparrow a\quad\Longrightarrow\quad\psi(a_n)\uparrow\psi(a).
\]
The sequences here are bounded and $a$ is their supremum in the algebra.
\end{definition}

The following is no doubt well known \cite[Lemma 3.2]{BGL}.

\begin{lemma}[Measures representing sequentially normal states]\label{measure} 
Sequentially normal states on $L^\infty(\mu)$ correspond bijectively to
countably additive probability measures $\nu$ on $\Sigma$ such that
$\mu(E)=0$ implies $\nu(E)=0$. The correspondence is
\[
 \nu(E)=\psi(\mathbf1_E),\qquad \psi(h)=\int_Xh\,d\nu.
\] 
\end{lemma}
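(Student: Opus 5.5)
Both directions of the correspondence will be checked, followed by the verification that the two maps are mutually inverse.

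\emph{From states to measures.} Let $\psi$ be a sequentially normal state on $L^\infty(\mu)$ and set $\nu(E)=\psi(\mathbf1_E)$. This is well defined on $\Sigma$ because $\mathbf1_E$ depends only on $E$ modulo $\mu$-null sets. For the same reason $\mu(E)=0$ gives $\mathbf1_E=0$ in $L^\infty$, hence $\nu(E)=0$. Positivity gives $\nu\ge0$, and $\nu(X)=\psi(\mathbf 1)=1$. Finite additivity follows from linearity. For countable additivity, let $E=\bigsqcup E_k$ and put $a_n=\mathbf1_{E_1\cup\dots\cup E_n}$. The step to check is that $a_n\uparrow\mathbf1_E$ with $\mathbf1_E$ the supremum in $L^\infty(\mu)$, the supremum being taken in the lattice of (classes of) bounded measurable functions. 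Indeed, if $b\ge a_n$ $\mu$-a.e. for all $n$, then, since countably many null sets have null union, $b\ge\mathbf1_E$ a.e. Sequential normality then gives $\nu(E)=\lim\nu(E_1\cup\dots\cup E_n)=\sum_k\nu(E_k)$.

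\emph{From measures to states.} Let $\nu$ be a countably additive probability measure with $\nu\ll\mu$, and define $\psi(h)=\int h\,d\nu$. This is well defined on $L^\infty(\mu)$: if $h=h'$ $\mu$-a.e., then they agree $\nu$-a.e. by absolute continuity. Moreover $|h|\le\|h\|_\infty$ $\nu$-a.e., so the integral is finite and $\psi$ is a state. If $0\le a_n\uparrow a$ in $L^\infty(\mu)$, then pointwise a.e.\ we have $a_n\le a$ and $a_n$ increasing. The main subtlety here is to check that $\sup_n a_n=a$ $\mu$-a.e., not merely that $a$ is an upper bound. To see this, note that the pointwise supremum $s=\sup_n a_n$ (of representatives) is bounded measurable and is an upper bound in $L^\infty$, so $a\le s$ a.e.; combined with $s\le a$ a.e., this gives $s=a$. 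Absolute continuity transfers this to $\nu$-a.e.\ convergence, and the monotone convergence theorem (or bounded convergence, since $\nu$ is finite) gives $\psi(a_n)\uparrow\psi(a)$.

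\emph{Bijectivity.} Starting from $\nu$, the state $\psi$ returns $\psi(\mathbf1_E)=\nu(E)$. Starting from $\psi$, the associated $\nu$ yields a functional $\psi'$ with $\psi'=\psi$ on indicator functions, and hence on simple functions by linearity. Simple functions are norm dense in $L^\infty(\mu)$, and both $\psi$ and $\psi'$ are positive, hence norm continuous with norm $1$, so $\psi'=\psi$. I expect no real obstacle. The only care needed is the identification of suprema in $L^\infty(\mu)$ of a possibly non-localizable measure space with a.e.\ pointwise suprema of sequences, which holds because only countably many null sets are involved.
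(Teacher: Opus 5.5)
Your proof is correct. The paper does not prove this lemma: it calls it well known and cites \cite[Lemma 3.2]{BGL}. Your argument is therefore a self-contained replacement for that citation rather than a variant of something in the text.

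The one substantive step is identifying the supremum in $L^\infty(\mu)$ of a bounded increasing sequence with the class of its a.e.\ pointwise supremum. This works because only countably many null sets are involved, so it needs no localizability or semifiniteness. That is exactly the generality required, since the paper applies the lemma on an arbitrary measure space. There $L^\infty(\mu)$ need not be $L^1(\mu)^*$, and the representing $\nu$ need not have an $L^1(\mu)$ density, so no appeal to normal functionals or $L^1$-duality would be available. The same identification of countable suprema also justifies $v_n\downarrow 0$ in the direct proof of Lemma~\ref{classical}(3).

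A cosmetic point: before taking $s=\sup_n a_n$, choose representatives with $0\le a_n\le\|a\|_\infty$ everywhere, which is possible since $0\le a_n\le a$. Then $s$ is genuinely bounded rather than merely essentially bounded.
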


\begin{lemma}[Classical dominated convergence formulations]\label{classical}
The following are equivalent convergence principles:
\begin{enumerate}
\item If $f_n\to f$ almost everywhere and $|f_n|\le g$ almost everywhere
for some $g\in L^1(\mu)_+$, then $f\in L^1(\mu)$ and
$\|f_n-f\|_1\to0$.
\item If $(h_n)$ is bounded in $L^\infty(\mu)$ and $h_n\to h$ almost
everywhere, then $h\in L^\infty(\mu)$ and
$h_n\to h$ in $\sigma(L^\infty,L^1)$.
\item Under the hypotheses in {\rm (2)}, $\psi(h_n)\to\psi(h)$ for every
sequentially normal state $\psi$ on $L^\infty(\mu)$.
\end{enumerate}
All three assertions hold on every measure space.
\end{lemma}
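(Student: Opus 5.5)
We prove the three implications (1)$\Rightarrow$(2)$\Rightarrow$(3)$\Rightarrow$(1). Since (1) is the classical Lebesgue dominated convergence theorem, which holds on every measure space, all three assertions then hold in general. Throughout we work with the pairing $\langle h,g\rangle=\int hg\,d\mu$ of $L^\infty(\mu)$ with $L^1(\mu)$. This pairing always makes sense, even though $L^1(\mu)^*$ may be strictly larger than $L^\infty(\mu)$ when $\mu$ is not localizable.

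For (1)$\Rightarrow$(2), let $|h_n|\le C$ a.e. and $h_n\to h$ a.e. Then $|h|\le C$ a.e., because a countable union of null sets is null, so $h\in L^\infty(\mu)$. Fix $g\in L^1(\mu)$ and set $f_n=h_ng$. Then $f_n\to hg$ a.e. and $|f_n|\le C|g|\in L^1(\mu)_+$. By (1),
\[
\Bigl|\int h_ng\,d\mu-\int hg\,d\mu\Bigr|\le\|h_ng-hg\|_1\to0,
\]
which is exactly $\sigma(L^\infty,L^1)$ convergence.

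For (2)$\Rightarrow$(3), we use Lemma~\ref{measure} rather than sequential weak* continuity, since a sequentially normal $\psi$ need not be given by an element of $L^1(\mu)$. Write $\psi(k)=\int k\,d\nu$, where $\nu$ is a countably additive probability measure with $\nu\ll\mu$. The key observation is that $\mu$-null sets are $\nu$-null, so $h_n\to h$ $\nu$-a.e. and $|h_n|\le C$ $\nu$-a.e. Now apply (2) on the finite measure space $(X,\Sigma,\nu)$, testing against the function $1\in L^1(\nu)$. This gives $\int h_n\,d\nu\to\int h\,d\nu$, that is, $\psi(h_n)\to\psi(h)$.

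This step needs care about what ``principle'' means. We read (2) as a statement quantified over all measure spaces, so applying it to $\nu$ is legitimate. If instead one insists that (2) refers only to the fixed $\mu$, the cleanest route is to note that for bounded $h_n$ and the finite measure $\nu$ the conclusion is just the bounded convergence theorem, which is a special case of (1) and hence of (2) applied to $\nu$. Either way it is this transfer from $\mu$ to $\nu$ that needs the most attention. A second subtlety is that elements of $L^\infty(\mu)$ are $\mu$-classes, and we must check that $\nu$-integrals are well defined on them. This again follows from $\nu\ll\mu$.

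For (3)$\Rightarrow$(1), suppose $f_n\to f$ a.e. and $|f_n|\le g$ with $g\in L^1(\mu)_+$. Then $|f|\le g$, so $f\in L^1(\mu)$. We may assume $\int g\,d\mu>0$, since otherwise everything vanishes a.e. Following the introduction, set $k_n=|f_n-f|/g$ on $\{g>0\}$ and $k_n=0$ elsewhere. Then $0\le k_n\le 2$ and $k_n\to0$ a.e. Define the state
\[
\psi(k)=\frac{\int kg\,d\mu}{\int g\,d\mu}.
\]
It is sequentially normal by the monotone convergence theorem, so (3) gives $\psi(k_n)\to\psi(0)=0$. Hence
\[
\|f_n-f\|_1=\int k_ng\,d\mu\to0,
\]
where we use that $f_n-f=0$ a.e. on $\{g=0\}$. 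This establishes (1) and closes the cycle.
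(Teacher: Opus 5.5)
Your argument is correct and uses the same ingredients as the paper. These are domination by $C|g|$ for (1)$\Rightarrow$(2), the representing measure $\nu\ll\mu$ of Lemma~\ref{measure} for (3), and the normalization $|f_n-f|/g$ tested against the sequentially normal state $k\mapsto\int kg\,d\mu/\int g\,d\mu$. You use that normalization to go directly from (3) to (1), where the paper goes (3)$\Rightarrow$(2) via the states $\psi_a$ and then (2)$\Rightarrow$(1).

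The only real difference is that the paper also proves (3) on the fixed algebra, without changing measure. It uses $v_n=\sup_{k\ge n}|h_k-h|\downarrow0$ in $L^\infty(\mu)$ and $|\psi(h_n-h)|\le\psi(v_n)$. This avoids the quantification issue you flag in (2)$\Rightarrow$(3), and it makes the chain (3)$\Rightarrow$(2)$\Rightarrow$(1) a derivation of LDCT from monotone convergence alone, rather than an appeal to LDCT as already known.
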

\begin{proof}
For $(1)\Rightarrow(2)$, let $C=\sup_n\|h_n\|_\infty$. Outside a common
null set, $|h_n|,|h|\le C$. For $a\in L^1(\mu)$,
\[
 a(h_n-h)\to0\quad\text{a.e.},\qquad |a(h_n-h)|\le2C|a|.
\]
Assertion (1) gives convergence of the integrals.

For $(2)\Rightarrow(1)$, $|f|\le g$ a.e., and the functions
\[
 u_n=\begin{cases}|f_n-f|/g,&g>0,\\0,&g=0,\end{cases}
\]
are bounded by $2$ and converge to zero a.e..   Integrating  (2) against $g$ gives
$\|f_n-f\|_1=\int gu_n\,d\mu\to0$.

For (3), Lemma~\ref{measure} and dominated convergence for the finite
measure $\nu$ give the assertion immediately. There is also a direct proof
on the fixed algebra, avoiding a change of measure: let
\[
 v_n=\sup_{k\ge n}|h_k-h|.
\]
Then $v_n\downarrow0$ in $L^\infty(\mu)$ and
$|\psi(h_n-h)|\le\psi(v_n)\to0$ by sequential normality.
Finally, $(3)\Rightarrow(2)$ follows by using the sequentially normal
states
\[
 \psi_a(b)=\frac{\int ab\,d\mu}{\int a\,d\mu},
 \qquad 0\ne a\in L^1(\mu)_+,
\]
and decomposing an arbitrary $L^1$ function into a linear combination of
positive functions.
\end{proof}

Here monotone convergence suffices to show that each $\psi_a$ is
sequentially normal. Thus the direct  proof of (3),
followed by $(3)\Rightarrow(2)\Rightarrow(1)$,  gives a proof of
classical LDCT using only monotone convergence and elementary
properties of the integral.

\begin{remark}
The representing measure in Lemma~\ref{measure} need not have an
$L^1(\mu)$ density on an arbitrary measure space. This is why (3) is not
merely a restatement of (2). Nevertheless, (2) and
(3) give the same conclusion for bounded almost-everywhere convergent sequences. 
\end{remark}

The restatements of LDCT above (and in the introduction) may also be rephrased in a way that gives a different 
perspective, and suggests another route forward.  
Suppose again that we are given $g \in L^1(X,\mu)_+$ and  $f_n \to f$ satisfying the usual hypotheses  of LDCT.   Let $d \nu = g d \mu$, so that $\nu \ll \mu$.  We may then do a change of measure, and move the  
problem inside the von Neumann algebra $N = L^\infty(X,\nu)$. 
  The canonical $*$-homomorphism $\pi : L^\infty(X,\mu) \to L^\infty(X,\nu)$  is easily seen to be surjective. 
  Let 
  $(k_n)$ denote the image under $\pi$ of the sequence $(h_n)$ considered in the introduction, similarly $k = \pi(h)$.  
  The dominated-convergence hypotheses on $(f_n)$  are equivalent to the bounded-convergence 
  $k_n \to k$ $\nu$-a.e..   Thus the general LDCT is equivalent to its special case  of a finite measure and an essentially bounded sequence of functions.  
  
The noncommutative variant of this finite measure case 
consists of a faithful normal state on a $\sigma$-finite von Neumann algebra $N$, together with a bounded sequence $(x_n)$ in $N$.  The n.c.\ variant of the convergence of $x_n$ to $x$ which replaces 
a.e.\ convergence, is the almost uniform convergence which we will study in the last sections of our paper
(see also \cite{BLS} and references therein).  Indeed nc almost uniform convergence  in $\sigma$-finite algebras
has been well studied,  e.g.\ in the papers of Paszkiewicz cited there such as \cite{Pasz}.   The associated nc LDCT in this perspective  is then essentially 
sequential nc a.u.\ convergence implying weak* convergence in $N$, which follows from 
\cite[Theorem 1.1.3]{Jaj}. 

\section{Preliminary convergence criteria}
We record consequences of Theorem~\ref{main} and some
bounded convergence criteria used below. Throughout,
weak* convergence in $M$ means convergence in
$\sigma(M,M_*)$.

\begin{lemma}\label{one-state}
For a state $\psi$ on $M$, the following are equivalent:
\begin{enumerate}
\item $\psi$ is sequentially normal.
\item If $(x_n)$ is norm bounded and $x_n\to x$ $\sigma$-strong, then
$\psi(|x_n-x|)\to0$.
\item If $x_n\uw x$, then $\psi(x_n)\to\psi(x)$.
\item Whenever a norm-bounded sequence \(a_n\in M_+\) converges \(\sigma\)-strong to \(a\), 
\[
\psi(a)\le\liminf_n\psi(a_n).
\tag{Fatou}
\]
\end{enumerate}
Moreover, for a bounded sequence $(x_n)$ in $M$ and $x\in M$,
\[
 x_n\uw x\quad\Longleftrightarrow\quad
 \psi(x_n)\to\psi(x)\ \text{for every sequentially normal state }\psi.
\]
\label{all-states}
\end{lemma}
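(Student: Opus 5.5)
The plan is to route everything through Theorem~\ref{main}, which gives (1)$\Leftrightarrow$(3) directly: sequential normality is equivalent to sequential weak$^*$ continuity. It then remains to connect (2) and (4) to this pair. I would prove the cycle (3)$\Rightarrow$(2)$\Rightarrow$(4)$\Rightarrow$(1), and (1)$\Rightarrow$(3) comes from Theorem~\ref{main}.

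For (3)$\Rightarrow$(2), suppose $(x_n)$ is bounded and $x_n\to x$ $\sigma$-strongly. Then $x_n-x\to0$ $\sigma$-strongly and boundedly. Hence $|x_n-x|^2=(x_n-x)^*(x_n-x)$ tends to $0$ weak$^*$: for a vector functional, $\langle |x_n-x|^2\xi,\xi\rangle=\|(x_n-x)\xi\|^2\to0$, and boundedness passes this to all of $M_*$. By (3), $\psi(|x_n-x|^2)\to0$. The Cauchy--Schwarz (Kadison) inequality $\psi(|y|)^2\le\psi(|y|^2)$ for a state then gives $\psi(|x_n-x|)\to0$. Note that $\sigma$-strong convergence of $x_n$ alone would not control $|x_n-x|$ directly, so the squaring step is essential.

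For (2)$\Rightarrow$(4), let $a_n\ge0$ be bounded with $a_n\to a$ $\sigma$-strongly. The key estimate is
\[
|\psi(a_n)-\psi(a)|=|\psi(a_n-a)|\le\psi(|a_n-a|),
\]
which holds for a selfadjoint element $y$ because $-|y|\le y\le|y|$. By (2) this bound tends to $0$, so in fact $\psi(a_n)\to\psi(a)$, which is stronger than the Fatou inequality. For (4)$\Rightarrow$(1), take $0\le a_n\uparrow a$ bounded. By Vigier's theorem the supremum $a$ is the strong, hence $\sigma$-strong, limit, so (4) gives $\psi(a)\le\liminf\psi(a_n)$. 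Monotonicity gives $\psi(a_n)\le\psi(a)$, so $\psi(a_n)\uparrow\psi(a)$.

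For the ``moreover'' statement, the forward direction is exactly (3) applied to each sequentially normal state. For the converse, the key fact is that every normal state is sequentially normal, and $M_*$ is the span of normal states. Hence $\omega(x_n)\to\omega(x)$ for all $\omega\in M_*$, which is weak$^*$ convergence; boundedness is not even needed here.

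The main obstacle I anticipate is the step (3)$\Rightarrow$(2), specifically checking that $|x_n-x|^2\to0$ weak$^*$ and that the Kadison--Schwarz estimate is available for a possibly non-normal state. The latter holds since $\psi$ is a positive functional on a $C^*$-algebra, via $\psi(|y|)^2=\psi(1\cdot|y|)^2\le\psi(1)\psi(|y|^2)$.
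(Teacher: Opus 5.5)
Your proof is correct, and its core is the same as the paper's: the only deep input is Theorem~\ref{main} for (1)$\Rightarrow$(3), and everything else is elementary manipulation of $y_n=x_n-x$ and $y_n^*y_n$. Your ``moreover'' argument is also the paper's.

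The decomposition differs slightly.
\begin{itemize}
\item The paper proves (3)$\Rightarrow$(2)$\Rightarrow$(1) and (2)$\Leftrightarrow$(4). Its step (4)$\Rightarrow$(2) applies Fatou to the positive sequence $C1-|x_n-x|\to C1$.
\item You close the cycle with (4)$\Rightarrow$(1) instead. You apply Fatou to $0\le a_n\uparrow a$, which converges strongly by Vigier, and combine this with $\psi(a_n)\le\psi(a)$.
\end{itemize}
Your route is shorter and avoids the $C1-|x_n-x|$ device. The paper's route shows, without invoking Theorem~\ref{main}, that the Fatou condition alone forces $\psi(|x_n-x|)\to0$. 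In your cycle, getting from (4) to (2) passes through Theorem~\ref{main}.

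In (3)$\Rightarrow$(2), the paper applies (3) to $|y_n|$ itself. You apply (3) to $|y_n|^2$ and then use Cauchy--Schwarz for $\psi$. Both are valid. However, your side remark that $\sigma$-strong convergence of $x_n$ ``would not control $|x_n-x|$ directly'' is inaccurate. Since $|y_n|^*|y_n|=y_n^*y_n$, $\sigma$-strong convergence of $y_n$ to $0$ is the same as $\sigma$-strong convergence of $|y_n|$ to $0$. That gives $|y_n|\to0$ weak$^*$ immediately, which is exactly how the paper argues.
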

\begin{proof} 
Theorem~\ref{main} gives $(1)\Rightarrow(3)$.

$(3)\Rightarrow(2)$. Put $y_n=x_n-x$.
Since $y_n\to0$ $\sigma$-strongly, for every
$\rho\in M_*^+$ we have
\[
 \rho(|y_n|^2)=\rho(y_n^*y_n)\longrightarrow0.
\]
Thus $|y_n|\to0$ $\sigma$-strongly, hence weak*.
Applying (3) gives $\psi(|y_n|)\to0$.

$(2)\Rightarrow(1)$. If $0\le a_n\uparrow a$,
then $a_n\to a$ $\sigma$-strong,  and
$\psi(a)-\psi(a_n)=\psi(|a-a_n|)\to0$.

The implication $(2)\Rightarrow(4)$ is immediate, since $-|a_n -a| \leq a_n -a  \leq |a_n -a|$.
For $(4)\Rightarrow(2)$, choose
$C\ge\sup_n\|x_n-x\|$. We observed that $|y_n| \to 0$, and so 
\[
 C1-|x_n-x|\longrightarrow C1
 \qquad \sigma\text{-strongly}.
\]
These elements are positive and norm bounded, so (4) gives
\[
 C=\psi(C1)
 \le \liminf_n\psi(C1-|x_n-x|)
 =C-\limsup_n\psi(|x_n-x|).
\]
Since $\psi(|x_n-x|)\ge0$, it follows that
$\psi(|x_n-x|)\to0$, proving (2).  The final equivalence follows from $(1)\Rightarrow(3)$
and the fact that normal states are sequentially normal
and their linear span is $M_*$. \end{proof}

\begin{remark}[Compatibility with the classical formulations]
Suppose that $\mu$ is localizable and that $(h_n)$ is
norm bounded in $L^\infty(\mu)$, with $h_n\to h$ a.e.
Applying Lemma~\ref{classical}(2) to $|h_n-h|^2$ gives
\[
\int a|h_n-h|^2\,d\mu\longrightarrow0
\qquad(a\in L^1(\mu)_+).
\]
Thus $h_n\to h$ $\sigma$-strongly, and
Lemma~\ref{one-state} gives convergence under every
sequentially normal state. The independent recovery of
classical dominated convergence is given in
the direct proof in Lemma~\ref{classical}, and also in
Section~\ref{au-weight-section}.
\end{remark}

The following probably well known criteria are useful descriptions of stronger convergence
hypotheses.
 In particular,
the spectral-projection condition below is exactly $\sigma$-strong
convergence on norm-bounded sequences, and is a nice variant of `convergence in measure'.

\begin{lemma}[Equivalent bounded convergence criteria]
\label{prop:projection-measure-convergence}
Let $(x_n)\subseteq M$ be norm bounded and $x\in M$. The following
conditions are equivalent:
\begin{enumerate}[label=(\roman*)]
\item For every $\delta>0$, the spectral projections
 $e_n(\delta)=\one_{(\delta,\infty)}(|x_n-x|)$ 
 converge weak$^*$ to zero.
\item $x_n\to x$ $\sigma$-strongly.
\item $x_n\to x$ and $x_n^*x_n\to x^*x$ weak$^*$.
\end{enumerate}
Under these conditions, every sequentially normal state $\varphi$
satisfies $\varphi(x_n)\to\varphi(x)$.
\end{lemma}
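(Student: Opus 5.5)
I would set $y_n=x_n-x$, which is norm bounded, say $\|y_n\|\le C$, and prove the cycle (ii)$\Rightarrow$(iii)$\Rightarrow$(ii) and (i)$\Leftrightarrow$(ii). The final assertion then follows from Lemma~\ref{one-state}: $\sigma$-strong convergence implies weak* convergence, and (1)$\Rightarrow$(3) there gives $\varphi(x_n)\to\varphi(x)$.

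\emph{(ii)$\Leftrightarrow$(iii).} Assume (ii). Then $x_n\to x$ weak*. For the products I would write
\[
x_n^*x_n-x^*x=y_n^*y_n+y_n^*x+x^*y_n .
\]
For $\rho\in M_*^+$ we have $\rho(y_n^*y_n)\to0$ by (ii). Next, $y_n\to0$ weak*, so $x^*y_n\to0$ and $y_n^*x\to0$ weak*, because multiplication by a fixed element is weak* continuous and the adjoint is weak* continuous. Since $M_*$ is spanned by $M_*^+$, this gives (iii). Conversely, assume (iii). Reading the same identity backwards, $y_n^*y_n=(x_n^*x_n-x^*x)-y_n^*x-x^*y_n\to0$ weak*. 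Hence $\rho(y_n^*y_n)\to0$ for every $\rho\in M_*^+$, which is $\sigma$-strong convergence $y_n\to0$.

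\emph{(ii)$\Rightarrow$(i).} From (ii), $\rho(|y_n|^2)\to0$ for $\rho\in M_*^+$. Spectral theory gives $\delta^2e_n(\delta)\le|y_n|^2$, so $0\le\rho(e_n(\delta))\le\delta^{-2}\rho(|y_n|^2)\to0$. Thus $e_n(\delta)\to0$ weak*.

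\emph{(i)$\Rightarrow$(ii).} I would split $|y_n|^2$ by the spectral projection:
\[
|y_n|^2=|y_n|^2e_n(\delta)+|y_n|^2(1-e_n(\delta))\le C^2e_n(\delta)+\delta^2 1 .
\]
Then for $\rho\in M_*^+$,
\[
\limsup_n\rho(|y_n|^2)\le C^2\lim_n\rho(e_n(\delta))+\delta^2\rho(1)=\delta^2\rho(1).
\]
Letting $\delta\to0$ gives (ii).

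The only real subtlety is confirming that $\sigma$-strong convergence of a bounded sequence to $0$ is characterized by $\rho(y_n^*y_n)\to0$ for all $\rho\in M_*^+$, together with the weak*-continuity facts. I do not expect any step to be a serious obstacle: the boundedness assumption is used only in the (i)$\Rightarrow$(ii) estimate, and everything else is elementary.
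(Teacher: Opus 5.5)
Your proposal is correct and is essentially the paper's proof. Both use the spectral sandwich $\delta^2e_n(\delta)\le y_n^*y_n\le\delta^2 1+C^2e_n(\delta)$ for (i)$\Leftrightarrow$(ii), the expansion $x_n^*x_n-x^*x=y_n^*y_n+x^*y_n+y_n^*x$ for (ii)$\Leftrightarrow$(iii), and Theorem~\ref{main} (which you reach through Lemma~\ref{one-state}) for the final claim. The only cosmetic difference is that the paper shows $y_n$, $x^*y_n$ and $y_n^*x$ tend to zero weak* via explicit Cauchy--Schwarz bounds, whereas you cite ``$\sigma$-strong implies weak*'' together with weak* continuity of multiplication and the adjoint.
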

\begin{proof}
Put $y_n=x_n-x$ and $C=\sup_n\|y_n\|$. Functional calculus gives
\[
 \delta^2 \, e_n(\delta)\leq y_n^*y_n
 \leq\delta^2 \, 1+C^2 \, e_n(\delta).
\]
By definition, (ii) means that $\omega(y_n^*y_n)\to0$ for every
normal positive functional $\omega$. The left inequality proves
(ii)$\Rightarrow$(i). Conversely, (i) and the right inequality give
\[
 \limsup_n\omega(y_n^*y_n)\leq\delta^2\omega(1).
\]
Letting $\delta\downarrow0$ proves (ii). Considering  normal positive
functionals suffices for weak$^*$ convergence, since every normal
functional is a linear combination of such functionals.

Assume (ii). Cauchy--Schwarz gives
\[
 |\omega(y_n)|^2\leq\omega(1) \, \omega(y_n^*y_n),\qquad
 |\omega(x^*y_n)|^2\leq\omega(x^*x)  \,\omega(y_n^*y_n).
\]
Thus $y_n$, $x^*  \, y_n$, and $y_n^*  \, x$ converge weak$^*$ to zero, as
does $y_n^*y_n$. Expanding
\[
 x_n^*x_n-x^*x=y_n^*y_n + x^*  \, y_n+ y_n^*  \, x
\]
proves (iii). Conversely, if (iii) holds  then weak$^*$ continuity of the
adjoint and of multiplication by a fixed element gives
\[
 y_n^*y_n=x_n^*x_n-x_n^*  \, x-x^*  \, x_n+x^*x\longrightarrow0
\]
weak$^*$, which is (ii). The assertion about $\varphi$ follows from Theorem \ref{main}.
\end{proof}

\section{Variants of the dominated convergence theorem}
\label{sec3}

\subsection{Bounded domination for sequentially normal weights} \label{s3was}

A weight $\Phi:M_+\to[0,\infty]$ is additive and positively homogeneous.
It is sequentially normal if it preserves suprema of bounded increasing
positive sequences. Write
$\mathfrak m_\Phi=\operatorname{span}\{a\in M_+:\Phi(a)<\infty\}$;
on this space $\Phi$ has its usual linear extension.

\bigskip

{\bf Remark.} The basic noncommutative Fatou lemma is weak* lower semicontinuity of a normal weight \cite{Haag}: if \(\varphi\) is a normal weight on \(M\), then
\[
x_n,x\in M_+,\qquad x_n\longrightarrow x\text{ weak*}
\quad\Longrightarrow\quad
\varphi(x)\le\liminf_n\varphi(x_n).
\] Applying this to \(g+f_n\) and \(g-f_n\), when \(\varphi(g)<\infty\), gives the dominated convergence conclusion of the following proposition for normal weights. The proposition extends that conclusion to sequentially normal weights.

\begin{proposition}[Bounded domination for sequentially normal weights]  \label{bounded-weight}
Let $\Phi$ be a sequentially normal weight and let $g\in M_+$ satisfy
$\Phi(g)<\infty$. If $f_n=f_n^*\in M$, $-g\le f_n\le g$, and
$f_n\uw f$, then $-g\le f\le g$, all these elements belong to
$\mathfrak m_\Phi$, and $\Phi(f_n)\to\Phi(f)$.
\end{proposition}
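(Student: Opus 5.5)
The plan is to reduce to Theorem~\ref{main} by building a sequentially normal positive functional out of $\Phi$ and $g$, in the spirit of the change-of-measure discussion in Section~\ref{class}.

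First, the order bounds pass to the limit. The positive cone $M_+$ is weak* closed. Since $g-f_n\ge0$ and $g+f_n\ge0$ converge weak* to $g-f$ and $g+f$, we get $-g\le f\le g$. Membership in $\mathfrak m_\Phi$ follows from this. Each $h$ with $-g\le h\le g$ can be written $h=\tfrac12(g+h)-\tfrac12(g-h)$ with $0\le g\pm h\le 2g$. Since $\Phi$ is monotone on $M_+$ (by additivity), $\Phi(g\pm h)\le 2\Phi(g)<\infty$, so $h\in\mathfrak m_\Phi$. This applies to $h=f_n$ and to $h=f$.

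Next, avoid working with $\Phi$ on all of $M$, and localize to the order interval. Replacing $f_n$ by $a_n=g+f_n$ and $f$ by $a=g+f$ reduces the claim to
\[
0\le a_n\le 2g,\quad a_n\uw a \ \Longrightarrow\ \Phi(a_n)\to\Phi(a).
\]
The issue is that $\Phi$ need not be a bounded functional on $M$. The natural fix is a Radon--Nikodym type functional, namely $\psi(x)=\Phi(g^{1/2}xg^{1/2})$ for $x\in M_+$.

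This $\psi$ is a positive functional on $M$. For $0\le x\le \|x\|1$ we have $0\le g^{1/2}xg^{1/2}\le\|x\|g$, so $\psi(x)\le\|x\|\Phi(g)<\infty$. It extends linearly, and $\psi$ is sequentially normal. Indeed, if $0\le x_n\uparrow x$ then $g^{1/2}x_ng^{1/2}\uparrow g^{1/2}xg^{1/2}$. The supremum is preserved because the map $y\mapsto g^{1/2}yg^{1/2}$ is $\sigma$-strongly continuous and increasing bounded sequences converge $\sigma$-strongly to their supremum. Sequential normality of $\Phi$ then gives $\psi(x_n)\uparrow\psi(x)$. Normalizing (the case $\Phi(g)=0$ is trivial) and applying Theorem~\ref{main}, $\psi$ is sequentially weak* continuous.

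The main obstacle is that $a_n$ is not of the form $g^{1/2}x_ng^{1/2}$ with $x_n$ bounded and weak* convergent unless $g$ is invertible. I would try to factor $a_n=g^{1/2}x_ng^{1/2}$ via the Douglas-type lemma. Since $a_n\le 2g$, we have $a_n^{1/2}=u_ng^{1/2}$ with $\|u_n\|\le\sqrt2$, and the $u_n$ can be chosen supported on the range projection $s(g)$. Then $x_n=u_n^*u_n$ is bounded by $2$ and satisfies $a_n=g^{1/2}x_ng^{1/2}$. However, weak* convergence of $x_n$ is not clear.

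To get around this, I would use subsequences. By boundedness, each subsequence of $(x_n)$ has a weak* cluster point, but only nets are available in general, and Theorem~\ref{main} is sequential. So instead I would first try the approximation $g_\epsilon=g+\epsilon\,s(g)$, or cut $g$ by spectral projections $p_\epsilon=\one_{[\epsilon,\infty)}(g)$. On $p_\epsilon$, the element $g$ is invertible, so $p_\epsilon a_np_\epsilon=g^{1/2}(g^{-1/2}p_\epsilon a_np_\epsilon g^{-1/2})g^{1/2}$ with a bounded weak* convergent middle factor, and Theorem~\ref{main} handles this part. The remainder is controlled by the order bound $a_n\le 2g$. Cauchy--Schwarz for the positive sesquilinear form $(y,z)\mapsto\Phi(y^*z)$ on the left ideal $\mathfrak n_\Phi$ gives
\[
|\Phi(q a_n r)|\le \Phi(q a_n q)^{1/2}\Phi(r a_n r)^{1/2}\le 2\Phi(qgq)^{1/2}\Phi(g)^{1/2},
\]
where $q=1-p_\epsilon$ and $r$ is either $p_\epsilon$ or $q$. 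Since $qgq\le\epsilon q$, this is small in $\epsilon$ once we know that $\Phi(qgq)=\Phi(g(1-p_\epsilon))\to0$. That in turn follows from sequential normality, since $gp_{1/k}\uparrow g$ (restricted to the support). Combining the three pieces uniformly in $n$ gives $\Phi(a_n)\to\Phi(a)$.
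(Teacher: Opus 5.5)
Your proof is correct, but after the setup it takes a different route from the paper. The paper carries through exactly what you started and then abandoned. It writes $f_n=g^{1/2}h_ng^{1/2}$ and $f=g^{1/2}hg^{1/2}$ with selfadjoint contractions $h_n,h\in pMp$, where $p=\supp(g)$, using the Douglas factorization. It then observes that $h\mapsto g^{1/2}hg^{1/2}$ is weak* continuous and injective on $pMp$, hence a homeomorphism on the weak* compact ball, so $h_n\uw h$. Finally it applies Theorem~\ref{main} to the sequentially normal state $\rho(a)=\Phi(g)^{-1}\Phi(g^{1/2}ag^{1/2})$ on $pMp$.

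So your ``main obstacle'' is not one. Your factors $x_n=u_n^*u_n$ lie in $pMp$. Every weak* cluster point $x'$ of $(x_n)$ also lies in $pMp$ and satisfies $g^{1/2}x'g^{1/2}=a$, so it equals the unique such preimage $x$. A sequence in a compact Hausdorff space with only one cluster point converges to it. Nets enter only in this compactness argument; what is fed into Theorem~\ref{main} is the sequence itself.

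Your spectral-cutoff argument nevertheless works. On $p_\epsilon=\one_{[\epsilon,\infty)}(g)$ the normalization is explicit and trivially weak* continuous. The three remaining pieces are bounded uniformly in $n$ by Cauchy--Schwarz for $(y,z)\mapsto\Phi(y^*z)$ on $\mathfrak n_\Phi$, together with $\Phi(g\one_{[0,\epsilon)}(g))\to0$, which comes from a second use of sequential normality. You should record that $a_n^{1/2}q$ and $a_n^{1/2}p_\epsilon$ lie in $\mathfrak n_\Phi$: this holds because $q$ and $p_\epsilon$ commute with $g$, so e.g.\ $qa_nq\le 2qgq\le 2g$.

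Your version avoids the Douglas--Moslehian--Kian--Xu factorization and the compactness argument, using only functional calculus in $g$. The paper's version is shorter and needs no $\epsilon$-approximation or Cauchy--Schwarz for a possibly non-normal weight. It also yields, as a by-product, weak* convergence of the normalized coefficients $h_n$, the noncommutative analogue of the classical $f_n/g$ normalization from the introduction.
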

\begin{proof}
The order interval is weak* closed. Membership in
$\mathfrak m_\Phi$ follows from $0\le g\pm f_n\le2g$, and similarly
for $f$. Put $p=\supp(g)$. The bounded quadratic-form factorization
of an element of $[-g,g]$, a consequence of Douglas's theorem
\cite[Theorem 1]{Douglas} and its von Neumann algebra version
\cite[Section 2, p.~3 of the arXiv version]{MKX}, gives unique selfadjoint contractions
$h_n,h\in pMp$ such that
\[
 f_n=g^{1/2}h_ng^{1/2},\qquad f=g^{1/2}hg^{1/2}.
\]
The map $h\mapsto g^{1/2}hg^{1/2}$ is weak* continuous and injective
on $pMp$. On its compact selfadjoint unit ball it is a homeomorphism onto
its image. Consequently $h_n\uw h$.

If $\Phi(g)>0$, then
\[
 \rho(a)=\Phi(g)^{-1}\Phi(g^{1/2}ag^{1/2}),\qquad a\in pMp,
\]
is a sequentially normal state. Multiplication preserves the relevant
increasing suprema, so we have verified sequential normality.
Theorem~\ref{main} now proves the result. If $\Phi(g)=0$, all the
displayed integrals vanish by order domination.
\end{proof}

\begin{remark}
Conversely, this proposition, asserted for all sequentially normal weights,
implies the final equivalence in Lemma~\ref{all-states}, by taking
$\Phi=\psi$, $g=C1$ with $C>0$ large enough, and treating real
and imaginary parts separately. 
\end{remark}

\medskip

To allow the majorant to vary, we use {\em sequential weak*
lower semicontinuity} 
\[
a_n,a\in M_+,\quad a_n\to a\ \text{weak*}
\quad\Longrightarrow\quad
\Phi(a)\le\liminf_n\Phi(a_n). \]
Corollary~\ref{cor:weights} gives classes of weights
satisfying this hypothesis.

\begin{theorem}[Varying-majorant dominated convergence]\label{thm:varying}
Suppose that $\Phi$ is sequentially weak* lower semicontinuous on $M_+$.
Let $g_n,g\in M_+$ and $f_n=f_n^*\in M$ satisfy
\[
 g_n\longrightarrow g\ \wstar,\qquad
 f_n\longrightarrow f\ \wstar,\qquad
 -g_n\le f_n\le g_n.
\]
Assume also that for all $n$ we have 
\[
 \Phi(g_n)<\infty ,\qquad
 \Phi(g_n)\longrightarrow\Phi(g)<\infty.
\]
Then $f=f^*$, $-g\le f\le g$, all $f_n$ and $f$ belong to $\mPhi$, and
\[
 \Phi(f_n)\longrightarrow\Phi(f).
\]
\end{theorem}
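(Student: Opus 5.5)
This is the standard generalized Fatou argument: apply sequential weak* lower semicontinuity to the two positive sequences $g_n+f_n$ and $g_n-f_n$.

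\emph{Preliminary facts.} Since the adjoint is weak* continuous, $f=f^*$. The positive cone is weak* closed, so from $g_n\pm f_n\ge 0$ and $g_n\pm f_n\to g\pm f$ weak* we get $-g\le f\le g$. Membership in $\mPhi$ follows from the bounds
\[
0\le g_n\pm f_n\le 2g_n,\qquad 0\le g\pm f\le 2g.
\]
Together with $\Phi(g_n),\Phi(g)<\infty$, these give $\Phi(g_n\pm f_n)<\infty$ and $\Phi(g\pm f)<\infty$. Then $f_n=\tfrac12\bigl((g_n+f_n)-(g_n-f_n)\bigr)\in\mPhi$, and similarly for $f$. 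On $\mPhi$ the linear extension of $\Phi$ gives
\[
\Phi(g_n\pm f_n)=\Phi(g_n)\pm\Phi(f_n),\qquad \Phi(g\pm f)=\Phi(g)\pm\Phi(f).
\]
All four quantities are real and finite.

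\emph{Main step.} Apply sequential weak* lower semicontinuity to $a_n=g_n+f_n\to g+f$ weak*, which is a sequence in $M_+$. This gives
\[
\Phi(g)+\Phi(f)\le\liminf_n\bigl(\Phi(g_n)+\Phi(f_n)\bigr)=\Phi(g)+\liminf_n\Phi(f_n),
\]
where we used $\Phi(g_n)\to\Phi(g)<\infty$. Hence $\Phi(f)\le\liminf_n\Phi(f_n)$.

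Likewise, applying lower semicontinuity to $g_n-f_n\to g-f$ gives
\[
\Phi(g)-\Phi(f)\le\Phi(g)-\limsup_n\Phi(f_n),
\]
so $\limsup_n\Phi(f_n)\le\Phi(f)$. Combining the two inequalities yields $\Phi(f_n)\to\Phi(f)$.

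\emph{Obstacle.} The one point needing care is the subtraction of the finite quantity $\Phi(g)$ from both sides. This requires separating the liminf of a sum into $\lim\Phi(g_n)+\liminf\Phi(f_n)$. That step is valid because $\Phi(g_n)$ converges to a finite limit and the sequence $(\Phi(f_n))$ is bounded, since $|\Phi(f_n)|\le\Phi(g_n)$. So no real difficulty arises.
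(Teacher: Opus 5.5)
Your proposal is correct and follows essentially the same route as the paper's proof: you use weak* closedness for $f=f^*$ and $-g\le f\le g$, the bounds $0\le g_n\pm f_n\le 2g_n$ for membership in $\mPhi$, boundedness via $|\Phi(f_n)|\le\Phi(g_n)$, and sequential weak* lower semicontinuity applied to $g_n+f_n$ and $g_n-f_n$, which yields $\limsup_n\Phi(f_n)\le\Phi(f)\le\liminf_n\Phi(f_n)$. Nothing is missing.
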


\begin{proof}
The selfadjoint part and the positive cone of $M$ are weak* closed.  $g_n\pm f_n\ge0$, taking
limits  gives $f=f^*$ and $g\pm f\ge0$.
Since $0\le g_n\pm f_n\le2g_n$, these positive elements have finite
weight, and $f_n\in\mPhi$. Similarly $f\in\mPhi$.

Put $c_n=\Phi(g_n)$, $c=\Phi(g)$, $b_n=\Phi(f_n)$, and $b=\Phi(f)$.
Then $|b_n|\le c_n$, so $(b_n)$ is bounded. Applying sequential weak*
lower semicontinuity to the two positive sequences $g_n+f_n$ and $g_n-f_n$
gives
\[
 c+b\le\liminf_n(c_n+b_n)=c+\liminf_n b_n,
\]
and
\[
 c-b\le\liminf_n(c_n-b_n)=c-\limsup_n b_n.
\]
All terms involving $c$ are finite. Hence
\[
 \limsup_n b_n\le b\le\liminf_n b_n,
\]
which proves the assertion.
\end{proof}

\begin{corollary}\label{cor:weights}
The conclusion of Theorem {\rm \ref{thm:varying}} holds in either of the following cases:
\begin{enumerate}[label=\textup{(\roman*)},leftmargin=*]
\item $\Phi$ is a normal weight;
\item $\Phi$ is sequentially normal and satisfies the condition 
\[
 {\rm (R)}:\qquad
 \Phi(x)=\sup\{\Phi(y):0\le y\le x,\ \Phi(y)<\infty\}
 \quad(x\in M_+).
\]
\end{enumerate}
\end{corollary}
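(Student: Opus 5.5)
By Theorem~\ref{thm:varying}, it suffices in each case to show that $\Phi$ is sequentially weak* lower semicontinuous on $M_+$. That is, if $a_n,a\in M_+$ and $a_n\to a$ weak*, then $\Phi(a)\le\liminf_n\Phi(a_n)$.

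For case (i) this is the noncommutative Fatou lemma recalled in the Remark before Proposition~\ref{bounded-weight}. By Haagerup's theorem \cite{Haag}, a normal weight is a pointwise supremum of positive normal functionals, $\Phi=\sup\{\omega\in M_*^+:\omega\le\Phi\}$. Each such $\omega$ is weak* continuous, so $\Phi$ is weak* lower semicontinuous. In particular it is sequentially weak* lower semicontinuous, and case (i) follows.

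For case (ii), fix $a_n\to a$ weak* in $M_+$. The sequence $(a_n)$ is automatically norm bounded by the uniform boundedness principle. By condition (R) it suffices to prove
\[
\Phi(y)\le\liminf_n\Phi(a_n)
\]
for every $y$ with $0\le y\le a$ and $\Phi(y)<\infty$. The plan is to use Theorem~\ref{main} through the device in the proof of Proposition~\ref{bounded-weight}. Take $g\in M_+$ with $\Phi(g)<\infty$ and $\Phi(g)>0$, and put $p=\supp(g)$. Then
\[
\rho(b)=\Phi(g)^{-1}\Phi(g^{1/2}bg^{1/2})
\]
is a sequentially normal state on $pMp$, hence sequentially weak* continuous by Theorem~\ref{main}.

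The obstacle is to compare $\Phi(a_n)$ with quantities of the form $\Phi(g^{1/2}b_ng^{1/2})$, where $b_n\to b$ weak* with $b_n\le$ something related to $a_n$. Compressions $g^{1/2}a_ng^{1/2}$ are not dominated by $a_n$, so I would instead use the Douglas factorization as in Proposition~\ref{bounded-weight}. Given $0\le y\le a$, write $y^{1/2}=c\,a^{1/2}$ with $c$ a contraction. Then, with $y_n:=a_n^{1/2}c^*c\,a_n^{1/2}\le a_n$, it is tempting to pass to the limit, but $a_n^{1/2}$ need not converge weak*.

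So the first thing I would try is an alternative route via the positive functionals majorized by $\Phi$. Sequential normality together with (R) should give that $\Phi$ is the supremum of the sequentially normal positive functionals $\omega\le\Phi$. Heuristically, on the hereditary face generated by each $y$ with $\Phi(y)<\infty$, the functional $b\mapsto\Phi(y^{1/2}by^{1/2})$ is sequentially normal and hence, by Theorem~\ref{main}, sequentially weak* continuous. A supremum of sequentially weak* continuous functionals is sequentially weak* lower semicontinuous, which gives the required inequality.

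Establishing this supremum representation, namely approximating $\Phi(y)$ from below by sequentially normal functionals bounded by $\Phi$, is the step I expect to be hardest. I would try to model it on the finite-part argument in Haagerup's proof, using the $\mathfrak m_\Phi$ structure supplied by (R).
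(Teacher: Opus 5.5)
Your case (i) is correct and matches the paper's argument: Haagerup's theorem \cite{Haag} gives weak* lower semicontinuity of a normal weight, and Theorem~\ref{thm:varying} then applies. Your reduction of case (ii) to sequential weak* lower semicontinuity is also fine, as is the further reduction via (R) to bounding $\Phi(y)$ for $0\le y\le a$ with $\Phi(y)<\infty$. The gap is that you never prove that lower semicontinuity. The entire content of (ii) is deferred to a representation of $\Phi$ as a supremum of sequentially normal positive functionals $\omega\le\Phi$, and you only flag this as the step you ``expect to be hardest''. As written, (ii) is a plan rather than a proof.

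Moreover, the candidates you suggest for that representation do not work. The functionals $b\mapsto\Phi(y^{1/2}by^{1/2})$ are sequentially normal and bounded by $\|b\|\Phi(y)$. They are not majorized by $\Phi$, however, because $y^{1/2}by^{1/2}\le b$ fails in general. For example, in $M_2$ take $\Phi(b)=\langle b\xi_1,\xi_1\rangle$, let $y$ be the projection onto $(\xi_1+\xi_2)/\sqrt2$, and let $b=e_{22}$. Then $\Phi(b)=0$, while $\Phi(yby)=\Phi(y/2)=1/4$.

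So a supremum of such functionals does not give $\omega(a_n)\le\Phi(a_n)$, and $\Phi(y)\le\liminf_n\Phi(a_n)$ does not follow. Your hereditary-face heuristic is really the commutative picture, where $yb\le b$ whenever $0\le y\le1$. Producing enough functionals $\omega\le\Phi$ is the nontrivial core of Haagerup's theorem even for normal weights. For a merely sequentially normal weight you would also need each such $\omega$ to be sequentially normal, which is not automatic from $\omega\le\Phi$ on elements of infinite weight.

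The paper does not attempt this from scratch. It invokes \cite[Theorem 6.22]{BLd}, which states exactly that a sequentially normal weight satisfying (R) is sequentially weak* lower semicontinuous, and then applies Theorem~\ref{thm:varying}. To complete case (ii) you need either that result or a genuine proof of it.
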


\begin{proof}
In (i), a normal weight is weak* lower semicontinuous by
Haagerup's theorem~\cite[Theorem 1.8]{Haag}.
In (ii), \cite[Theorem 6.22]{BLd} gives sequential weak* lower semicontinuity. 
Then apply Theorem~\ref{thm:varying}.
\end{proof}

\begin{corollary}[Absolute deviations]\label{cor:absolute}
Let $\Phi$ satisfy the hypothesis of Theorem {\rm \ref{thm:varying}}, and similarly let
$g_n,g$ satisfy $g_n,g\in M_+, g_n\to g$ weak*, with $\Phi(g_n)<\infty$ and 
 $\Phi(g_n)\to \Phi(g)<\infty$. Suppose that $(x_n)$ is
norm bounded, $x_n\to x$ $\sigma$-strongly, and
\[
 |x_n-x|\le g_n\qquad(n\ge1).
\]
Then $\Phi(|x_n-x|)\to0$.
\end{corollary}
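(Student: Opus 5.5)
Apply Theorem~\ref{thm:varying} with $f_n=|x_n-x|$ and limit $f=0$. The hypotheses on $\Phi$ and on $(g_n)$, $g$ are assumed verbatim, so two things remain to check: the domination $-g_n\le f_n\le g_n$, and the weak$^*$ convergence $f_n\to0$.

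\textbf{Domination.} Each $f_n$ is positive, so $-g_n\le 0\le f_n\le g_n$ follows from the assumed bound $|x_n-x|\le g_n$.

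\textbf{Weak$^*$ convergence.} This is the substantive step. Put $y_n=x_n-x$. Since $y_n\to0$ $\sigma$-strongly, for every $\rho\in (M_*)_+$ we have $\rho(|y_n|^2)=\rho(y_n^*y_n)\to0$. This is the observation already used in the proof of Lemma~\ref{one-state}, $(3)\Rightarrow(2)$. It says precisely that $|y_n|\to0$ $\sigma$-strongly, because $\sigma$-strong convergence of a sequence $z_n$ to $0$ means $\rho(z_n^*z_n)\to0$ for all such $\rho$.

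Next, $\sigma$-strong convergence implies weak$^*$ convergence. By Cauchy--Schwarz,
\[
|\rho(|y_n|)|^2\le\rho(1)\,\rho(|y_n|^2)\to0 .
\]
Positive normal functionals span $M_*$, so $|y_n|\to0$ weak$^*$. Norm boundedness of $(x_n)$ is not needed for this step; it only guarantees that everything stays inside bounded sets. Lemma~\ref{prop:projection-measure-convergence} could be cited here as well.

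\textbf{Conclusion.} Theorem~\ref{thm:varying} now yields $\Phi(|x_n-x|)\to\Phi(0)=0$.

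The main potential obstacle is making sure that $\sigma$-strong convergence of $x_n$ transfers to the moduli $|x_n-x|$. This is handled by the identity $|y|^2=y^*y$ together with Cauchy--Schwarz, so no real difficulty is expected.
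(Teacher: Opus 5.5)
Your proposal is correct and follows essentially the same route as the paper: Cauchy--Schwarz gives $\rho(|y_n|)^2\le\rho(1)\rho(y_n^*y_n)\to0$ for positive normal $\rho$, hence $|y_n|\to0$ weak$^*$. Theorem~\ref{thm:varying} is then applied with $f_n=|x_n-x|$ and $f=0$.
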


\begin{proof}
Put $y_n=x_n-x$. For every normal positive functional $\rho$, the
Cauchy--Schwarz inequality gives
\[
 \rho(|y_n|)^2\le\rho(1)\rho(y_n^*y_n)\longrightarrow0.
\]
Since the normal positive functionals linearly span $M_*$, it follows that
$|y_n|\to0$ weak*. Apply Theorem~\ref{thm:varying} with $f_n=|y_n|$ and $f=0$.
\end{proof}

\subsection{An unbounded integrable majorant} \label{oldsec4}

Let $\tau$ be a faithful normal semifinite trace on $M$.
Products of $\tau$-measurable affiliated operators below mean their
closures in the algebra of measurable operators. 

In this subsection, we formulate dominated convergence
through bounded normalized sequences. For a positive integrable
majorant $g$, we write
$f_n=g^{1/2}x_ng^{1/2}$ and $f=g^{1/2}xg^{1/2}$,
where the convergence hypotheses are imposed on $x_n,x\in M$.
For selfadjoint contractions these products satisfy
$-g\le f_n,f\le g$.
Classically, this is the normalization $x_n=f_n/g$ discussed in the introduction,
with the quotient set equal to zero where $g=0$. 

\label{HL1}

For a general von Neumann algebra $M$, we use the Haagerup spaces $L^p(M)$ and their canonical integration functional
$\Tr:L^1(M)\to\mathbb C$. The relevant standard facts are
$L^1(M)\cong M_*$, the pairing $\omega_g(a)=\Tr(ga)$, and H\"older's
inequality for products in these spaces. See \cite[Section 1.2]{HJX}.
Here $\Tr$ is an integration functional on $L^1(M)$, not a claim that
$M$ has a faithful normal semifinite trace. The unbounded operators are
realized in the crossed-product construction, not necessarily as
operators affiliated with $M$ itself.

\begin{theorem}[Unbounded dominated case without a trace on $M$]\label{haagerup}\label{tracial}\label{trace-norm}
Let $M$ be any von Neumann algebra, and let $x_n,x\in M$ be selfadjoint
contractions. The following are equivalent:
\begin{enumerate}
\item $x_n\uw x$.
\item $\psi(x_n)\to\psi(x)$ for every sequentially normal state on $M$.
\item For every $g\in L^1(M)_+$, put
\[
 f_n=g^{1/2}x_ng^{1/2},\qquad f=g^{1/2}xg^{1/2}.
\]
Then $\Tr(f_n)\to\Tr(f)$.
\end{enumerate}
In {\rm (3)}, $f_n,f\in L^1(M)$ and $-g\le f_n,f\le g$.
If, in addition, $x_n\to x$ $\sigma$-strongly, then
$\|f_n-f\|_1\to0$ for each $g$.
More generally, this $L^1$-norm conclusion holds for any norm-bounded
sequence $(x_n)$ in $M$ converging $\sigma$-strongly to $x$, without
assuming selfadjointness or contractivity. It also holds if instead
$x_n^*\to x^*$ $\sigma$-strongly.
The same assertions hold in $L^1(M,\tau)$, with $\tau$ in place
of $\Tr$, when $\tau$ is a faithful normal semifinite trace. \end{theorem}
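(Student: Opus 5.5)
The plan is to reduce everything to the identity $\Tr(g^{1/2}ag^{1/2})=\Tr(ga)=\omega_g(a)$ for $g\in L^1(M)_+$ and $a\in M$, together with $L^1(M)\cong M_*$. First I would record the membership claims. By H\"older's inequality, $g^{1/2}\in L^2(M)$, so $g^{1/2}x_ng^{1/2}\in L^1(M)$ with $\|f_n\|_1\le\|x_n\|\,\|g\|_1$. For a selfadjoint contraction $x_n$ we have $-1\le x_n\le 1$. Conjugating by $g^{1/2}$ preserves order in $L^1(M)$, so $-g\le f_n\le g$, and likewise for $f$. Cyclicity of $\Tr$ (Hölder/trace property of $\Tr$ on products of $L^2$ and $L^2$, or $L^1$ and $M$) then gives $\Tr(f_n)=\Tr(gx_n)=\omega_g(x_n)$.

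For the equivalences, (1)$\Leftrightarrow$(2) is exactly the final equivalence of Lemma~\ref{all-states}, since $(x_n)$ is bounded. For (1)$\Rightarrow$(3), $\omega_g$ is normal, so $\Tr(f_n)=\omega_g(x_n)\to\omega_g(x)=\Tr(f)$. For (3)$\Rightarrow$(1), every $\omega\in M_*$ is a linear combination of positive normal functionals. Each of these has the form $\omega_g$ with $g\in L^1(M)_+$. Hence $\omega(x_n)\to\omega(x)$ for all $\omega\in M_*$, which is weak* convergence.

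For the norm conclusion, take a bounded $(x_n)$ with $y_n=x_n-x\to0$ $\sigma$-strongly. Then $f_n-f=g^{1/2}y_ng^{1/2}$, and Hölder gives
\[
\|g^{1/2}y_ng^{1/2}\|_1\le\|g^{1/2}\|_2\,\|y_ng^{1/2}\|_2
= \Tr(g)^{1/2}\,\Tr(g^{1/2}y_n^*y_ng^{1/2})^{1/2}=\|g\|_1^{1/2}\,\omega_g(y_n^*y_n)^{1/2}.
\]
The last factor tends to $0$ because $\omega_g$ is a normal positive functional and $y_n\to0$ $\sigma$-strongly. If instead $y_n^*\to0$ $\sigma$-strongly, I would bound by $\|g^{1/2}y_n\|_2\|g^{1/2}\|_2$ instead. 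Here $\|g^{1/2}y_n\|_2^2=\Tr(y_n^*gy_n)=\Tr(gy_ny_n^*)=\omega_g(y_ny_n^*)\to0$. Alternatively, one can pass to adjoints, since $\|z^*\|_1=\|z\|_1$. In the selfadjoint contraction case this is simply a special instance.

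For the tracial version I would repeat the same argument verbatim in $L^1(M,\tau)$. The pairing $a\mapsto\tau(ga)$ identifies $L^1(M,\tau)_+$ with $(M_*)_+$, and Hölder and cyclicity hold for $\tau$-measurable operators.

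The main technical point, which is hardly an obstacle, is justifying the cyclicity identities $\Tr(g^{1/2}ag^{1/2})=\Tr(ga)$ and $\|ag^{1/2}\|_2^2=\Tr(g^{1/2}a^*ag^{1/2})$ in Haagerup's crossed-product setting. For this I would cite the standard property $\Tr(xy)=\Tr(yx)$ for $x\in L^p$ and $y\in L^{p'}$ from \cite[Section 1.2]{HJX}, together with order preservation of $z\mapsto g^{1/2}zg^{1/2}$.
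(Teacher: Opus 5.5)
Your proposal is correct and follows the paper's proof essentially step for step. You identify $\Tr(f_n)$ with $\omega_g(x_n)$ for the normal positive functionals $\omega_g$, invoke Lemma~\ref{all-states} for (1)$\Leftrightarrow$(2), and use the H\"older bound $\|g^{1/2}y_ng^{1/2}\|_1\le\Tr(g)^{1/2}\,\omega_g(y_n^*y_n)^{1/2}$ for the norm conclusion. Your direct bound via $\omega_g(y_ny_n^*)$ in the adjoint case is an equivalent alternative to the paper's appeal to adjoint invariance of the $L^1$ norm, which you also mention.
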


\begin{proof}
H\"older's inequality gives
$\|g^{1/2}ag^{1/2}\|_1\le\Tr(g)\|a\|$ for $a\in M$,
and positivity gives the stated order bounds.
The functionals
\[
\omega_g(a)=\Tr(g^{1/2}ag^{1/2})=\Tr(ga),
\qquad g\in L^1(M)_+, \]
are precisely the normal positive functionals on $M$.
This proves $(1)\Leftrightarrow(3)$, while
Lemma~\ref{all-states} gives $(1)\Leftrightarrow(2)$.
For the norm conclusion, put $y_n=x_n-x$ and
$\omega_g(a)= \Tr(g^{1/2}ag^{1/2})$ for $a\in M$.
Since $\omega_g$ is normal, $\sigma$-strong convergence gives
$\omega_g(y_n^*y_n)\to0$. Hence H\"older's inequality yields
\[
 \begin{aligned}
 \|g^{1/2}y_ng^{1/2}\|_1
 &\le \|g^{1/2}\|_2\,\|y_ng^{1/2}\|_2\\
 &=\Tr(g)^{1/2}\omega_g(y_n^*y_n)^{1/2}
 \longrightarrow0.
 \end{aligned}
\]
For the assertion about adjoints, apply this argument to $y_n^*$
and use adjoint invariance of the $L^1$ norm.

For the tracial assertions use the same proof in $L^p(M,\tau)$;
see also \cite[Chapter IX, \S2]{TakesakiII}.
\end{proof}

\subsection{A fixed normal semifinite weight and unbounded majorants}
\label{ldct-merged:subsection}

The aim of this subsection is the positive-majorant `integration 
criterion' in Theorem~\ref{ldct-merged:criterion}. We first
construct the normal positive functionals associated with
integrable affiliated majorants, and then show that those
associated with bounded positive majorants already form a
norm-total family in the predual.

Let $\varphi$ be a faithful normal semifinite weight on $M$, let
\[
 \mathfrak n_\varphi=\{a\in M:\varphi(a^*a)<\infty\},
\]
and let $(H_\varphi,\pi_\varphi,\Lambda_\varphi)$ be its GNS construction.
Thus $\Lambda_\varphi(\mathfrak n_\varphi)$ is dense,
$\|\Lambda_\varphi(a)\|^2=\varphi(a^*a)$, and $\pi_\varphi$ is faithful
and normal. We use the linear extension of $\varphi$ to
$\mathfrak m_\varphi=\mathfrak n_\varphi^*\mathfrak n_\varphi$
in expressions such as $\varphi(a^*xa)$. Here products of these spaces mean the linear span of products.

\begin{definition}[The integral associated with an unbounded majorant]
\label{ldct-merged:integral-definition}
Let $g$ be a positive selfadjoint operator affiliated with $M$. Define the truncation $g_k=\min(g,k\cdot1),$ and let 
 $\varphi(g):=\sup_k\varphi(g_k).$ 
We call $g$ integrable for this construction if $\varphi(g)<\infty$.
For integrable $g$ and $x\in M$, define
\begin{equation}
 g^{\frac12}\varphi g^{\frac12}(x)
   =\lim_{k\to\infty}\varphi(g_k^{1/2}xg_k^{1/2}).
 \label{ldct-merged:regularized-integral}
\end{equation}
Existence of this limit, including norm convergence of the functionals,
follows from the next result.
\end{definition}

Related constructions of functionals associated with positive
affiliated operators, and their connections with normal
weights, are developed in \cite{Novikov} (see
e.g.\ Theorems 2 and 11, Corollaries 3 and 8, there). We give a direct
GNS proof of the finite-weight statement needed here,
including the norm estimate for truncations. 

\begin{lemma}[Construction of the regularized weighted integral]
\label{ldct-merged:integral-construction}
Suppose that $\varphi(g)<\infty$. The functional in \eqref{ldct-merged:regularized-integral} exists, is normal
and positive, and satisfies
$\|g^{\frac12}\varphi g^{\frac12}\|
 =g^{\frac12}\varphi g^{\frac12}(1)=\varphi(g)$.
More precisely, if
$\omega_k(x)=\varphi(g_k^{1/2}xg_k^{1/2})$, then
\begin{equation}
 \|g^{\frac12}\varphi g^{\frac12}-\omega_k\|
 \le 2\varphi(g)^{1/2}
        \bigl(\varphi(g)-\varphi(g_k)\bigr)^{1/2}.
 \label{ldct-merged:truncation-estimate}
\end{equation}
\end{lemma}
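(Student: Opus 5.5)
The plan is to realize each truncated functional $\omega_k$ as a vector functional in the GNS space of $\varphi$, show that the corresponding vectors form a Cauchy sequence, and take $g^{\frac12}\varphi g^{\frac12}$ to be the vector functional of the limit vector.

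First, the truncations. By the Borel functional calculus for the positive selfadjoint operator $g$ affiliated with $M$, each $g_k=\min(g,k\cdot1)$ lies in $M_+$, with $\|g_k\|\le k$. The $g_k$ all commute, since they are functions of $g$. They increase with $k$, so $\varphi(g_k)$ increases, and by definition $\varphi(g_k)\uparrow\varphi(g)<\infty$. Since $\varphi\bigl((g_k^{1/2})^*g_k^{1/2}\bigr)=\varphi(g_k)<\infty$, we have $g_k^{1/2}\in\mathfrak n_\varphi$. Put $\xi_k=\Lambda_\varphi(g_k^{1/2})$. For $x\in M$, $g_k^{1/2}xg_k^{1/2}$ lies in $\mathfrak n_\varphi^*\mathfrak n_\varphi\subseteq\mathfrak m_\varphi$, and
\[
\omega_k(x)=\varphi(g_k^{1/2}xg_k^{1/2})=\langle\pi_\varphi(x)\xi_k,\xi_k\rangle .
\]
Because $\pi_\varphi$ is normal, each $\omega_k$ is a normal positive functional, and $\|\omega_k\|=\|\xi_k\|^2=\varphi(g_k)$.

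Second, the Cauchy estimate, which is the key step. For $j\ge k$, put $d=g_j^{1/2}-g_k^{1/2}$. This is a positive element of $M$ commuting with $g_j$ and $g_k$. For real numbers $a\ge b\ge0$,
\[
(\sqrt a-\sqrt b)^2\le(\sqrt a-\sqrt b)(\sqrt a+\sqrt b)=a-b .
\]
Applying this to the commuting functions $\min(t,j)\ge\min(t,k)$ of $g$ gives the operator inequality $d^2\le g_j-g_k$. Hence
\[
\|\xi_j-\xi_k\|^2=\varphi(d^2)\le\varphi(g_j)-\varphi(g_k)\le\varphi(g)-\varphi(g_k).
\]
Here $\varphi(g_j-g_k)=\varphi(g_j)-\varphi(g_k)$ by linearity on $\mathfrak m_\varphi$. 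So $(\xi_k)$ is Cauchy; let $\xi$ be its limit, so that $\|\xi\|^2=\lim_k\varphi(g_k)=\varphi(g)$.

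Third, the conclusion. Define $\omega(x)=\langle\pi_\varphi(x)\xi,\xi\rangle$. This is normal and positive, with $\|\omega\|=\omega(1)=\|\xi\|^2=\varphi(g)$. For vectors $\xi,\eta$ one has the elementary bound $\|\omega_\xi-\omega_\eta\|\le\|\xi-\eta\|(\|\xi\|+\|\eta\|)$. Using $\|\xi_k\|\le\|\xi\|=\varphi(g)^{1/2}$, this gives
\[
\|\omega-\omega_k\|\le 2\varphi(g)^{1/2}\bigl(\varphi(g)-\varphi(g_k)\bigr)^{1/2},
\]
which is \eqref{ldct-merged:truncation-estimate}. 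In particular $\omega_k\to\omega$ in norm, so the limit in \eqref{ldct-merged:regularized-integral} exists for every $x$ and equals $\omega(x)$.

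The main point to handle carefully is the operator inequality $d^2\le g_j-g_k$ for the unbounded affiliated $g$. It is resolved by working entirely within the abelian von Neumann algebra generated by the spectral projections of $g$, where all the $g_k$ are bounded functions of $g$ and the scalar inequality transfers directly.
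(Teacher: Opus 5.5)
Your proposal is correct and follows essentially the same route as the paper. Both arguments use the GNS vectors $\xi_k=\Lambda_\varphi(g_k^{1/2})$, the Cauchy estimate coming from $(g_j^{1/2}-g_k^{1/2})^2\le g_j-g_k$, the limit vector functional, and the bound $\|\omega_\xi-\omega_\eta\|\le(\|\xi\|+\|\eta\|)\|\xi-\eta\|$ to obtain the truncation estimate.
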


\begin{proof}
Put $\xi_k=\Lambda_\varphi(g_k^{1/2})$.
Since the truncations commute, for $l\ge k$ the scalar inequality
for square roots gives
\[
 (g_l^{1/2}-g_k^{1/2})^2\le g_l-g_k.
\]
Consequently
\[
 \|\xi_l-\xi_k\|^2\le\varphi(g_l)-\varphi(g_k).
\]
Thus $\xi_k\to\xi_g$ in $H_\varphi$, $\|\xi_g\|^2=\varphi(g)$, and
\[
 \omega_k(x)=\langle\pi_\varphi(x)\xi_k,\xi_k\rangle
 \longrightarrow\langle\pi_\varphi(x)\xi_g,\xi_g\rangle
 =g^{\frac12}\varphi g^{\frac12}(x).
\]
Normality follows from normality of $\pi_\varphi$.
Writing $\omega_\xi(x)=\langle\pi_\varphi(x)\xi,\xi\rangle$, the estimate
\[
 \|\omega_{\xi_g}-\omega_{\xi_k}\|
 \le(\|\xi_g\|+\|\xi_k\|)\|\xi_g-\xi_k\|
\]
proves \eqref{ldct-merged:truncation-estimate}.
\end{proof}

For brevity, we write $\omega_g=g^{\frac12}\varphi g^{\frac12}$, and call it the regularized weighted integral.

\begin{remark}[What the notation means]
\label{ldct-merged:notation}
One may denote the quantity in \eqref{ldct-merged:regularized-integral} by
$\int g^{1/2}xg^{1/2}\,d\varphi$, provided this is explicitly defined as
the regularized integral above. For a selfadjoint contraction $x$, the
quadratic form on $\operatorname{Dom}(g^{1/2})$ given by
\[
 q_{g,x}(\xi,\eta)=\langle xg^{1/2}\xi,g^{1/2}\eta\rangle
\]
satisfies $-q_g\le q_{g,x}\le q_g$.
Every bounded truncation satisfies
$-g_k\le g_k^{1/2}xg_k^{1/2}\le g_k$.
For a general weight we do not usually identify an arbitrary unbounded
formal product with a selfadjoint measurable operator.

For a trace and $g\in L^1(M,\tau)_+$, the regularized integral is
exactly $\tau(g^{1/2}xg^{1/2})$, by $L^2$ convergence of the square roots
and H\"older's inequality.
\end{remark}

We will use repeatedly a little-known  characterization of semifiniteness (that is, weak* density in $M_+$ of 
$\{ x \in M_+ : \varphi(x) < \infty \}$) of normal weights.  It may be deduced from \cite[Lemma 1.3]{HJKN} 
or its proof, however for completeness 
we include a different proof.   We are grateful to Brent Nelson for the latter reference. 

\begin{lemma}
\label{ldct-merged:finite-projections}
A weight $\varphi$ on a von Neumann algebra $M$ is semifinite if and only if there is a net of projections $(e_\alpha)$ in $M$ such that
$\varphi(e_\alpha)<\infty$ and $e_\alpha\to1$ strongly. \end{lemma}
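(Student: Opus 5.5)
The direction ``$\Leftarrow$'' is a direct compression argument. The direction ``$\Rightarrow$'' goes through the left ideal $\mathfrak n_\varphi$ and an increasing right approximate unit for it, from which projections are extracted by spectral cut-offs. Here semifinite means weak* density in $M_+$ of $F=\{x\in M_+:\varphi(x)<\infty\}$. No normality of $\varphi$ is needed in either direction; only additivity, homogeneity and monotonicity are used.

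\emph{($\Leftarrow$).} Suppose $e_\alpha\to1$ strongly and $\varphi(e_\alpha)<\infty$. Fix $x\in M_+$. Then $0\le e_\alpha xe_\alpha\le\|x\|e_\alpha$, so $\varphi(e_\alpha xe_\alpha)\le\|x\|\varphi(e_\alpha)<\infty$, that is, $e_\alpha xe_\alpha\in F$. Multiplication is jointly strongly continuous on bounded sets, so $e_\alpha xe_\alpha\to x$ strongly, hence weakly, hence weak* (the net is bounded). Thus $F$ is weak* dense in $M_+$.

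\emph{($\Rightarrow$).} I will proceed in four steps.

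\begin{enumerate}
\item Show $F\subseteq\mathfrak n_\varphi=\{a:\varphi(a^*a)<\infty\}$. For $x\in F$ we have $x^2\le\|x\|x$, so $\varphi(x^2)\le\|x\|\varphi(x)<\infty$.
\item Identify the weak* closure of $\mathfrak n_\varphi$. It is a left ideal, by the inequality $(ba)^*(ba)\le\|b\|^2a^*a$. Its weak* closure is therefore a weak* closed left ideal, hence of the form $Mq$ for a unique projection $q$. Since $F\subseteq\mathfrak n_\varphi$ is weak* dense in $M_+\ni1$, we get $1\in Mq$, so $q=1$.
\item Produce an approximate unit. The norm-closed left ideal $\overline{\mathfrak n_\varphi}$ has an increasing right approximate unit $(u_\lambda)$ of positive contractions in $\mathfrak n_\varphi$. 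I will take the standard one indexed by finite subsets $\lambda$ of $\mathfrak n_\varphi$, namely $v_\lambda=\sum_{a\in\lambda}a^*a$ and $u_\lambda=v_\lambda(1/|\lambda|+v_\lambda)^{-1}$, which lies in $\mathfrak n_\varphi$ by heredity. The supremum of $(u_\lambda)$ is the projection $q$ of Step 2, because $au_\lambda\to a$ for all $a\in\mathfrak n_\varphi$. Hence $u_\lambda\uparrow1$ strongly.
\item Extract projections. Let $e_\lambda=\one_{[1/2,1]}(u_\lambda)$. Then $e_\lambda\le4u_\lambda^2$, so $\varphi(e_\lambda)\le4\varphi(u_\lambda^2)<\infty$ since $u_\lambda\in\mathfrak n_\varphi$. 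Also $1-e_\lambda\le2(1-u_\lambda)$, so $\omega(1-e_\lambda)\to0$ for every normal state $\omega$. For projections, $\|(1-e_\lambda)\xi\|^2=\langle(1-e_\lambda)\xi,\xi\rangle$, so this weak convergence upgrades to strong convergence, giving $e_\lambda\to1$ strongly.
\end{enumerate}

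The main obstacle is Step 3: showing rigorously that the increasing approximate unit of $\mathfrak n_\varphi$ has strong limit equal to the support projection of the weak* closure. Equivalently, one must show that a norm-dense increasing approximate unit of a left ideal converges strongly to $q$. I would argue this by noting that $\sup_\lambda u_\lambda=p\le q$ and $a(1-p)=0$ for all $a\in\mathfrak n_\varphi$. By weak* continuity this identity persists on the weak* closure $Mq$; in particular $q(1-p)=0$, giving $p=q=1$.
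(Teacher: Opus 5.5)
Your proof is correct, but it takes a genuinely different route from the paper's. Your ($\Leftarrow$) direction is the same compression argument as in the paper, and, as you say, neither direction needs normality. For ($\Rightarrow$) the overall shape also matches the paper: an increasing net of positive contractions of finite weight rising strongly to $1$, followed by a spectral cutoff using $1-e\le 2(1-u)$.

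\textbf{Where the routes differ.} The difference is in how you identify the limit.
\begin{itemize}
\item The paper works directly with the cone $\mathcal D=\{a\in M_+:\varphi(a)<\infty\}$, directed by the operator order, and the net $h_a=a(1+a)^{-1}$. It proves $\bigvee_{a\in\mathcal D}s(a)=1$ because $\operatorname{span}\mathcal D\subseteq pMp$ and $pMp$ is weak* closed. It then gets $\lim_a h_a\ge s(b)$ for each $b\in\mathcal D$ by scaling, since $h_{tb}\to s(b)$. The projections are $\mathbf 1_{[1,\infty)}(a)\le a$.
\item You pass to the left ideal $\mathfrak n_\varphi$ and use its standard increasing right approximate unit. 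You identify the strong limit $u$ through $au=a$ for $a\in\mathfrak n_\varphi$.
\end{itemize}

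\textbf{What each buys.} Your version connects the lemma to standard C$^*$-machinery. It is also insensitive to which definition of semifiniteness is used: density of $F$, of $\mathfrak m_\varphi$, or of $\mathfrak n_\varphi$ all work. The cost is importing the approximate-unit construction, namely its monotonicity and the estimate $\|a-au_\lambda\|^2\le 1/(4|\lambda|)$ for $a\in\lambda$. The paper's argument uses only support projections, monotone strong limits, order reversal of inversion, and weak* closedness of a corner.

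\textbf{Simplifications available to you.}
\begin{itemize}
\item Step 2, the structure theorem $Mq$ for weak* closed left ideals, is dispensable. The set $\{y\in M: y(1-u)=0\}$ is weak* closed and contains $\mathfrak n_\varphi\supseteq F$, so it contains $1$, which gives $u=1$ directly. This is the paper's $pMp$ device in another form.
\item You call the supremum $p$, but it is not known in advance to be a projection. Your argument never uses that it is one, so this is harmless.
\item Since $u_\lambda\le|\lambda|\,v_\lambda$, each $u_\lambda$ already lies in $F$. So $e_\lambda\le 2u_\lambda$ gives $\varphi(e_\lambda)<\infty$ without going through $u_\lambda^2$.
\end{itemize}
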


\begin{proof}
Suppose first that $\varphi$ is semifinite. Let $\mathcal D=\{a\in M_+:\varphi(a)<\infty\}$. Direct $\mathcal D$ by the
usual operator order.  It  is directed because $a+b\in\mathcal D$ is an
upper bound for $a,b\in\mathcal D$.
Writing $s(a)$ for the support projection of $a$, we have
\[
\bigvee_{a\in\mathcal D}s(a)=1.
\]
Indeed, if $p$ denotes this supremum, then
$\operatorname{span}\mathcal D\subseteq pMp$. Since $pMp$ is weak$^*$
closed and $\operatorname{span}\mathcal D$ is weak$^*$ dense in $M$,
we obtain $pMp=M$, and hence $p=1$.

For $a\in\mathcal D$, set
\[
h_a=a(1+a)^{-1}=1-(1+a)^{-1}.
\]
Because inversion reverses the order on positive invertible operators,
$(h_a)$ is an increasing net of positive contractions. Thus
$h_a\to h$ strongly for some $0\le h\le1$.
For every $b\in\mathcal D$ and $t>0$, we have $tb\in\mathcal D$, so
\[
h\ge h_{tb}=tb(1+tb)^{-1}
\xrightarrow[t\to\infty]{\mathrm{strongly}}s(b).
\]
Consequently $h\ge s(b)$ for every $b\in\mathcal D$.
Since $h\le1$, this implies that $h$ acts as the identity on the range
of each $s(b)$. Their supremum is $1$, so $h=1$.

Now take
\[
e_a=\mathbf 1_{[1,\infty)}(a).
\]
Functional calculus gives
\[
e_a\le a,
\qquad
0\le1-e_a\le2(1+a)^{-1}=2(1-h_a).
\]
The first inequality yields
\[
\varphi(e_a)\le\varphi(a)<\infty.
\]
The second yields, for every vector $\xi$ in the Hilbert space on which
$M$ acts,
\[
\|(1-e_a)\xi\|^2
=\langle(1-e_a)\xi,\xi\rangle
\le2\langle(1-h_a)\xi,\xi\rangle
\longrightarrow0.
\]
Hence $e_a\to1$ strongly.

Conversely, suppose that $(e_\alpha)$ is a net of projections with
$\varphi(e_\alpha)<\infty$ and $e_\alpha\to1$ strongly.
For every $x\in M_+$,
\[
0\le e_\alpha x e_\alpha\le\|x\|e_\alpha,
\]
so $\varphi(e_\alpha x e_\alpha)<\infty$.
Moreover, $e_\alpha x e_\alpha\to x$ strongly and, since these
compressions are norm bounded, also weak$^*$.
Thus every element of $M_+$ belongs to the weak$^*$ closure of
$\mathcal D$. Since $M_+$ linearly spans $M$,
$\operatorname{span}\mathcal D$ is weak$^*$ dense in $M$.
Therefore $\varphi$ is semifinite.
\end{proof}

\begin{theorem}[Positive-majorant criterion]
\label{ldct-merged:criterion}
Let $\varphi$ be a faithful normal semifinite weight on $M$,
let $(x_n)$ be norm bounded in $M$, and let $x\in M$.
The associated bounded-majorant functionals satisfy
\[
 \overline{\operatorname{span}}^{\|\cdot\|}
 \bigl\{\omega_g:g\in M_+,\ \varphi(g)<\infty\bigr\}
 =M_*.
\]
Moreover, the following are equivalent:
\begin{enumerate}
\renewcommand{\theenumi}{\roman{enumi}}
\renewcommand{\labelenumi}{\textup{(\theenumi)}}
\item $x_n\to x$ weak*;
\item $\psi(x_n)\to\psi(x)$ for every sequentially normal state $\psi$ on $M$;
\item $\omega_g(x_n)\to\omega_g(x)$ for every bounded $g\in M_+$
      with $\varphi(g)<\infty$;
\item $\omega_g(x_n)\to\omega_g(x)$ for every positive selfadjoint
      affiliated $g$ with $\varphi(g)<\infty$;
\item $\varphi(a^*x_na)\to\varphi(a^*xa)$ for every
      $a\in\mathfrak n_\varphi$;
\item $\omega_g(b^*x_nb)\to\omega_g(b^*xb)$ for every positive
      selfadjoint affiliated $g$ with $\varphi(g)<\infty$
      and every $b\in M$.
\end{enumerate}
 Conditions \textup{(i)} and \textup{(iii)--(vi)} are also equivalent
for norm bounded nets.
\end{theorem}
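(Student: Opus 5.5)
The plan is to prove the density statement first, since the equivalences then follow by standard approximation arguments.

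\emph{Density.} For bounded $g\in M_+$ with $\varphi(g)<\infty$ we have $g_k=g$ for large $k$. Hence $\omega_g(x)=\varphi(g^{1/2}xg^{1/2})=\langle\pi_\varphi(x)\Lambda_\varphi(g^{1/2}),\Lambda_\varphi(g^{1/2})\rangle$. Polarization applied to $g^{1/2}$ and sums of such elements shows that the closed span contains all vector functionals $\omega_{\xi,\eta}$ with $\xi,\eta\in\Lambda_\varphi(\mathcal P)$, where $\mathcal P=\{a\in M_+:\varphi(a^2)<\infty\}$. Here I use that if $a,b\in\mathcal P$, then $\omega_{\Lambda(a)+\Lambda(b)}=\omega_{\Lambda(a+b)}$ and $a+b\in\mathcal P$. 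To handle $\omega_{\Lambda(a)+i\Lambda(b)}$, the cleanest route is to use the positive ``square'' $c=(a^2+b^2)^{1/2}$; alternatively one can work directly with $\varphi(b^*xa)$.

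I would instead argue by annihilators. Suppose $y\in M$ satisfies $\varphi(g^{1/2}yg^{1/2})=0$ for all bounded $g\ge0$ with $\varphi(g)<\infty$. By Lemma~\ref{ldct-merged:finite-projections} there are projections $e_\alpha\to1$ strongly with $\varphi(e_\alpha)<\infty$. For each such $e=e_\alpha$ the functional $z\mapsto\varphi(eze)$ is positive, normal and finite on $eMe$, and it is faithful there. Now take $g=e$, and more generally $g=ehe$ with $h\in(eMe)_+$. This gives $\varphi(h^{1/2}yh^{1/2})=0$ for all $h\in(eMe)_+$. Since $\varphi|_{eMe}$ is a faithful normal positive functional $\rho$ on $eMe$, we get $\rho(h^{1/2}\,eye\,h^{1/2})=0$ for all $h\ge0$. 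Polarization in $h$ (the sesquilinear form $(a,b)\mapsto\rho(b^*\,eye\,a)$ vanishes on the diagonal $a=b\ge0$, and positive elements span) gives $\rho(b^* eye\, a)=0$ for all $a,b\in eMe$. Taking $a=1_e$ and $b=(eye)^*$ yields $\rho(|eye|^2)=0$, so $eye=0$ by faithfulness. Letting $e_\alpha\to1$ strongly gives $y=0$. By Hahn--Banach the span is norm dense in $M_*$. The main obstacle is exactly this polarization step. There, ``$h^{1/2}$ with $h\ge 0$'' has to be replaced by general $a$, and the rigorous route is the identity $\rho(b^*za)$ obtained from $\rho((a+i^kb)^*z(a+i^kb))$ together with the fact that every $c\in eMe$ has $c^*c$ of the form needed. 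More simply, I would use $g=c^*c$ through its polar decomposition $c=u|c|$. Then $\varphi(c^*yc)=\varphi(|c|u^*yu|c|)$, which requires invariance under replacing $y$ by $u^*yu$. Avoiding this, I would just show directly that the quadratic form $a\mapsto\varphi(a^*ya)$ on $eMe$ vanishes using positive $a$ only, via the GNS inner product restricted to $\Lambda_\varphi(eMe)$, which is spanned by $\Lambda_\varphi((eMe)_+)$.

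\emph{Equivalences.} (i)$\Leftrightarrow$(ii) is the final statement of Lemma~\ref{all-states}. (i)$\Rightarrow$(iv), (vi) follow because $\omega_g$ is normal by Lemma~\ref{ldct-merged:integral-construction} and $z\mapsto\omega_g(b^*zb)$ is normal. (i)$\Rightarrow$(v) holds since $z\mapsto\varphi(a^*za)=\langle\pi_\varphi(z)\Lambda_\varphi(a),\Lambda_\varphi(a)\rangle$ is normal. The implications (vi)$\Rightarrow$(iv)$\Rightarrow$(iii) are trivial (take $b=1$, and bounded $g$ is affiliated). So is (v)$\Rightarrow$(iii), taking $a=g^{1/2}\in\mathfrak n_\varphi$. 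Finally, for (iii)$\Rightarrow$(i) I use norm boundedness together with density. Given $\omega\in M_*$ and $\varepsilon>0$, choose $\theta$ in the span with $\|\omega-\theta\|<\varepsilon$. Then
\[
|\omega(x_n-x)|\le\varepsilon\bigl(\sup_n\|x_n\|+\|x\|\bigr)+|\theta(x_n-x)|.
\]
This $\varepsilon/3$ argument uses only boundedness, so it works verbatim for nets. None of the implications among (i), (iii)--(vi) used sequences except through Lemma~\ref{all-states}, which concerns only (ii). This proves the final sentence.
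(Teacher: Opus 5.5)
Your treatment of the equivalences matches the paper. So does your proof of (iii)$\Rightarrow$(i) via density plus the $\varepsilon$-approximation, which is also valid for nets. The gap is in the separation step, exactly where you flagged it. With $\rho=\varphi|_{eMe}$ and $z=eye$, your hypothesis correctly gives $\rho(kzk)=0$ for all $k\in(eMe)_+$. But the inference that the sesquilinear form $(a,b)\mapsto\rho(b^*za)$ then vanishes on $eMe\times eMe$ ``by polarization, since positive elements span'' is not valid. Polarizing a sesquilinear form requires its values at $a+i^kb$, and these are not positive. Vanishing on the diagonal of a spanning cone does not suffice in general: $T=\begin{pmatrix}0&i\\-i&0\end{pmatrix}$ is a nonzero Hermitian matrix with $\langle Tv,v\rangle=0$ for every $v$ in the cone $\mathbb{R}_+^2$, which spans $\mathbb{C}^2$. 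Your GNS fallback (``$\Lambda_\varphi(eMe)$ is spanned by $\Lambda_\varphi((eMe)_+)$'') rests on the same false principle. And, as you noticed, $g=c^*c$ only gives $\varphi(|c|y|c|)$, not $\varphi(c^*yc)$. (Also, taking $b=(eye)^*$ produces $\rho((eye)^2)$; you would want $b=eye$.)

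The missing idea is to exploit that $z$ itself lies in $eMe$. The paper first reduces to $y=y^*$: the $\omega_g$ are positive, so $\operatorname{Re}y$ and $\operatorname{Im}y$ are also annihilated. It then notes that $e+tz\ge0$ for small real $|t|$. The identity $\rho((e+tz)y(e+tz))=0$ is a polynomial identity in $t$ with linear coefficient $\rho(zye+eyz)=2\rho(z^2)$, and faithfulness gives $z=0$.

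Alternatively, your idea can be repaired by polarizing the \emph{bilinear} form rather than the sesquilinear one. Expanding $\rho((a+b)y(a+b))=0$ gives $\rho(ayb)+\rho(bya)=0$ for $a,b\in(eMe)_+$. This expression is complex bilinear in $(a,b)$ and positive elements span $eMe$, so it vanishes for all $a,b\in eMe$. Taking $a=e$ and $b=z^*$ gives $\rho(zz^*)+\rho(z^*z)=0$, hence $z=0$, without even reducing to selfadjoint $y$. With either fix inserted, the rest of your argument goes through.
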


\begin{proof}
Theorem \ref{main} gives
(i)$\Rightarrow$(ii). Normal states are sequentially normal and their
linear span is $M_*$, so (ii)$\Rightarrow$(i).
Normality of every $\omega_g$ gives (i)$\Rightarrow$(iv), and
(iv)$\Rightarrow$(iii) is immediate.

To prove (iii)$\Rightarrow$(i), we first prove separation, using only
bounded positive $g$ of finite weight.
Suppose $y=y^*\in M$ and $\omega_g(y)=0$ for every such $g$.
Fix any projection $e$ with $\varphi(e)<\infty$, and put $z=eye$.
For all real $t$ in a sufficiently small interval about zero,
$a_t=e+tz$ is positive in $eMe$. Put $g_t=a_t^2$. Then
\[
 0\le g_t\le\|a_t\|^2e,\qquad
 \varphi(g_t)<\infty,\qquad
 g_t^{1/2}=a_t.
\]
The restriction of $\varphi$ to $eMe$ extends to  a finite positive functional on $eMe$,
and therefore
\[
 0=\omega_{g_t}(y)=\varphi\bigl((e+tz)y(e+tz)\bigr)
\]
is a polynomial identity in $t$.   All terms here lie in $eMe$. Its linear coefficient is
\[
 \varphi(zye+eyz)=2\varphi(z^2),
\]
because $z=eye$. Faithfulness gives $z^2=0$ and $z=0$.
Applying this to the projections in Lemma~\ref{ldct-merged:finite-projections}
and using $e_\alpha ye_\alpha\to y$ strongly shows that $y=0$.

If an arbitrary $y\in M$ is annihilated by all these $\omega_g$, positivity
of the functionals implies that both $\operatorname{Re}y$ and
$\operatorname{Im}y$ are annihilated. Thus $y=0$.
The complex linear span $V$ of these normal functionals is consequently
norm dense in $M_*$: otherwise Hahn--Banach would give a nonzero element
of $(M_*)^*=M$ annihilating $V$.

Now assume (iii) and put $C=\sup_n\|x_n-x\|<\infty$.
For $\rho\in M_*$ and $\varepsilon>0$, choose $\eta\in V$ with
$\|\rho-\eta\|<\varepsilon$. Condition (iii) gives $\eta(x_n-x)\to0$, so
\[
 \limsup_n|\rho(x_n-x)|\le C\varepsilon.
\]
Letting $\varepsilon\downarrow0$ proves (i).

For $a\in\mathfrak n_\varphi$, the GNS formula
\[
 \varphi(a^*ya)
 =\langle\pi_\varphi(y)\Lambda_\varphi(a),\Lambda_\varphi(a)\rangle
 \qquad(y\in M)
\]
shows that the functional in (v) is normal.
The functionals in (vi) are normal by
Lemma~\ref{ldct-merged:integral-construction}.
Thus (i) implies (v) and (vi).
Condition (v) implies (iii) by taking $a=g^{1/2}$,
while (vi) implies (iv) by taking $b=1$.
The same arguments, excluding the condition involving sequentially normal
states, apply to norm bounded nets.
\end{proof}

 \begin{corollary}[Strong convergence and weighted domination]
\label{ldct-merged:strong-convergence}
Suppose that $(x_n)$ is norm bounded and $x_n\to x$
$\sigma$-strongly. For every positive affiliated $g$ with
$\varphi(g)<\infty$, write
$\omega_g=g^{1/2}\varphi g^{1/2}$ as above. Then
\[
 \omega_g(|x_n-x|)\longrightarrow0,
 \qquad
 \omega_g((x_n-x)^*(x_n-x))\longrightarrow0.
\]
In the commutative case the first conclusion is precisely
$\int |x_n-x|g\,d\mu\to0$.
\end{corollary}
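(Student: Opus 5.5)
The corollary follows from the normality of $\omega_g$ (Lemma~\ref{ldct-merged:integral-construction}), together with the observation already used in Lemma~\ref{one-state} and Corollary~\ref{cor:absolute} that $\sigma$-strong convergence of $y_n=x_n-x$ to zero forces both $y_n^*y_n$ and $|y_n|$ to converge weak* to zero. Two steps are needed.

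\emph{The second conclusion.} Since $\omega_g$ is a normal positive functional, the definition of $\sigma$-strong convergence gives $\omega_g(y_n^*y_n)\to0$ directly.

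\emph{The first conclusion.} Note that $|y_n|$ is a positive element of $M$, so $\omega_g(|y_n|)$ is defined by the regularized integral \eqref{ldct-merged:regularized-integral}. For any normal positive $\rho$, Cauchy--Schwarz gives
\[
\rho(|y_n|)^2\le\rho(1)\,\rho(|y_n|^2)=\rho(1)\,\rho(y_n^*y_n)\longrightarrow0 .
\]
Take $\rho=\omega_g$, using $\omega_g(1)=\varphi(g)<\infty$:
\[
\omega_g(|y_n|)\le\varphi(g)^{1/2}\,\omega_g(y_n^*y_n)^{1/2}\longrightarrow0 .
\]
Norm boundedness is not needed for this step. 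Alternatively, $|y_n|\to0$ weak* for a bounded sequence, and condition (iv) of Theorem~\ref{ldct-merged:criterion} then applies.

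\emph{The commutative remark.} Here $M=L^\infty(X,\mu)$ and $\varphi$ is integration against $\mu$. The truncations satisfy $g_k^{1/2}hg_k^{1/2}=g_kh$, so $\omega_g(h)=\lim_k\int g_kh\,d\mu$. For $h=|x_n-x|\ge0$, monotone convergence gives $\omega_g(h)=\int hg\,d\mu$, with $g\in L^1(\mu)_+$ because $\varphi(g)=\sup_k\int g_k\,d\mu<\infty$. Thus the first conclusion reads $\int|x_n-x|g\,d\mu\to0$.

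The only point that needs any care is this identification of the regularized integral with $\int hg\,d\mu$; the rest is immediate from the earlier results.
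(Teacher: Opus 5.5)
Your proof is correct and is essentially the paper's argument: normality of $\omega_g$ (Lemma~\ref{ldct-merged:integral-construction}) gives $\omega_g(y_n^*y_n)\to0$, and Cauchy--Schwarz with $\omega_g(1)=\varphi(g)$ then gives $\omega_g(|y_n|)\to0$. In the commutative case you identify $\omega_g(h)$ with $\int hg\,d\mu$ by monotone convergence for $h\ge0$, which is fine; the paper instead uses the estimate $\bigl|\int(g-g_k)h\,d\mu\bigr|\le\|h\|_\infty\int(g-g_k)\,d\mu$, which covers every $h$ and not only positive ones.
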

\begin{proof}
Put $y_n=x_n-x$. The functional $\omega_g$ is normal and positive
by Lemma~\ref{ldct-merged:integral-construction}.
Thus $\sigma$-strong convergence gives $\omega_g(y_n^*y_n)\to0$.
Cauchy--Schwarz gives
\[
 \omega_g(|y_n|)^2
 \le \omega_g(1)\,\omega_g(|y_n|^2)
 =\varphi(g)\,\omega_g(y_n^*y_n)\longrightarrow0.
\] This is what was needed. 
\end{proof}

\noindent\textbf{Remark.}
We remark on the classical meaning of the above.
In the commutative setting, $\varphi(g)=\int g\,d\mu$ by monotone
convergence. Moreover, directly from the estimate
\[
 \left|\int(g-g_k)x\,d\mu\right|
 \le\|x\|_\infty\int(g-g_k)\,d\mu\longrightarrow0,
\]
one has $(g^{\frac12}\varphi g^{\frac12})(x)=\int gx\,d\mu$.

Corollary~\ref{ldct-merged:strong-convergence} gives the weighted
absolute-error conclusion. In the commutative case the normalization
in Lemma~\ref{classical} gives $\int|f_n-f|\,d\mu\to0$.

\section{Almost uniform convergence for a faithful normal semifinite weight}
\label{au-weight-section}

Noncommutative counterparts of almost-everywhere convergence
have been studied in many papers.  Indeed there are  several different notions of such convergence.
For example, Hensz, Jajte and Paszkiewicz developed bundle convergence
in von Neumann algebras and their $L^2$-spaces \cite{HJP};
Le Gac and M\'oricz proved Beppo Levi and dominated
convergence theorems in that setting \cite{LGM}.
Budde, Labuschagne and Steyn develop notions applicable
also to nontracial Haagerup $L^p$-spaces \cite{BLS}.
Here we study projection-cutoff conditions for norm bounded
sequences in $M$, with particular emphasis on local
conditions relative to a faithful normal semifinite weight
and their consequences for integration. 

Throughout this section, $M$ is a von Neumann algebra and $\varphi$ is a
faithful normal semifinite weight (f.n.s.w.) on $M$. 
 
A \emph{cutoff} here means a projection $e$ that cuts down the algebra, and such that $1-e$ has small weight.
We use these to study
$(x_n-x)e$, or the bilateral compression $e(x_n-x)e$, on the retained
part of the algebra `supported by'  $e$, while $1-e$ has small weight. In a corner $pMp$
we take $e\le p$ and require $\varphi(p-e)$ to be small. Classically,
$e=\mathbf1_E$ restricts the functions to a large measurable set $E$.
The one-sided a.u.\ definition originates in Lance's work
\cite{Lance}, for a faithful normal state; below we also use
semifinite weights and the bilateral variant.

\subsection{Global almost uniform convergence}

\begin{proposition}[Almost-uniform bounded convergence]
\label{prop:almost-uniform-convergence}
Let $\psi$ be a sequentially normal state, let $(x_n)$ be norm
bounded, and let $x\in M$.  Suppose that there are projections
$p_m\uparrow1$ such that, for every fixed $m$,
\begin{equation}\label{eq:almost-uniform-compression}
                    \|(x_n-x)p_m\|\longrightarrow0.
\end{equation}
Then $\psi(x_n)\to\psi(x)$.
\end{proposition}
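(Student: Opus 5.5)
The plan is to reduce to $\sigma$-strong convergence and then apply Lemma~\ref{one-state}, or more directly Lemma~\ref{prop:projection-measure-convergence}, which states that $\sigma$-strong convergence of a norm-bounded sequence forces $\psi(x_n)\to\psi(x)$ for every sequentially normal state (via Theorem~\ref{main}).

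Put $y_n=x_n-x$ and $C=\sup_n\|y_n\|<\infty$. We show $y_n\to0$ $\sigma$-strongly, i.e.\ $\omega(y_n^*y_n)\to0$ for each $\omega\in(M_*)_+$. Since $p_m\uparrow1$, we have $p_m\to1$ strongly, so $\omega(1-p_m)\to0$ by normality. Split
\[
 y_n = y_np_m + y_n(1-p_m),
\]
so that, using $(a+b)^*(a+b)\le 2a^*a+2b^*b$,
\[
 y_n^*y_n\le 2p_my_n^*y_np_m+2(1-p_m)y_n^*y_n(1-p_m)
 \le 2\|y_np_m\|^2\,1+2C^2(1-p_m).
\]
Applying $\omega$ gives
\[
 \omega(y_n^*y_n)\le 2\|y_np_m\|^2\omega(1)+2C^2\omega(1-p_m).
\]
For fixed $m$ the first term tends to $0$ as $n\to\infty$ by \eqref{eq:almost-uniform-compression}. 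Hence $\limsup_n\omega(y_n^*y_n)\le 2C^2\omega(1-p_m)$, and letting $m\to\infty$ yields $\omega(y_n^*y_n)\to0$.

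Thus $x_n\to x$ $\sigma$-strongly with $(x_n)$ norm bounded, and the final assertion of Lemma~\ref{prop:projection-measure-convergence} gives $\psi(x_n)\to\psi(x)$. Alternatively, Lemma~\ref{one-state}(2) gives $\psi(|x_n-x|)\to0$, and then $|\psi(x_n-x)|\to0$ follows by Cauchy--Schwarz after polar decomposition. The only delicate step is the operator inequality that splits off the cutoff; the factor $2$ handles the cross terms. Nothing beyond normality of $\omega$ and monotone convergence of $p_m$ is needed, and no $\sigma$-finiteness is used.
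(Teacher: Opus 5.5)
Your proof is correct, but it takes a different and heavier route than the paper.

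The paper argues directly with $\psi$. It writes $\psi(y_n)=\psi(y_np_m)+\psi(y_n(1-p_m))$. The first term is bounded by $\|y_np_m\|$ because $\psi$ is a state, and the second by $C\psi(1-p_m)^{1/2}$ by Cauchy--Schwarz for $\psi$. It then uses sequential normality of $\psi$ on the increasing sequence $p_m\uparrow1$ to get $\psi(1-p_m)\downarrow0$. Theorem~\ref{main} is never invoked.

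Your route first proves the stronger fact that $x_n\to x$ $\sigma$-strongly. That gives weak* convergence and $\psi(|y_n|)\to0$ for every sequentially normal $\psi$ at once, which is a genuine bonus. However, the final step from normal functionals $\omega$ to the merely sequentially normal $\psi$ goes through the last assertion of Lemma~\ref{prop:projection-measure-convergence} or through Lemma~\ref{one-state}. That step is exactly the sequential weak* continuity theorem of \cite{BLd}. So your closing claim that nothing beyond normality of $\omega$ and monotonicity of $p_m$ is needed understates what you actually use.

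Your own operator inequality $y_n^*y_n\le2\|y_np_m\|^2\,1+2C^2(1-p_m)$ can be evaluated under any positive functional, so you may apply $\psi$ to it directly. Sequential normality gives $\psi(1-p_m)\to0$, hence $\psi(y_n^*y_n)\to0$, and then $|\psi(y_n)|^2\le\psi(y_n^*y_n)\to0$. This is essentially the paper's elementary argument and avoids the detour through Theorem~\ref{main}.
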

\begin{proof}
Again put $y_n=x_n-x$ and $C=\sup_n\|y_n\|$.  For fixed $m$,
Cauchy--Schwarz gives
\begin{align*}
 |\psi(y_n)|
 &\leq |\psi(y_np_m)|+|\psi(y_n(1-p_m))|\\
 &\leq \|y_np_m\|+C\psi(1-p_m)^{1/2}.
\end{align*}
First let $n\to\infty$.  The first term tends to zero by
\eqref{eq:almost-uniform-compression}.  Then let $m\to\infty$.
Sequential normality gives $\psi(1-p_m)\downarrow0$, proving the
claim.
\end{proof}

\begin{remark}[Relation with noncommutative Egoroff theorems]
\label{rem:egoroff-comparison}
Noncommutative Lusin--Egoroff approximation results give norm
approximation after compression, with the exceptional projection
small for a prescribed normal positive functional; see
\cite[Theorem III.4.13, p.~85]{Tak1} and \cite{ABB}.

To apply the estimate in
Proposition~\ref{prop:almost-uniform-convergence} to a sequentially
normal state $\psi$, the exceptional projections must be small
for $\psi$ itself. In that proposition, $p_m\uparrow1$ ensures
$\psi(1-p_m)\downarrow0$ directly by sequential normality.
Smallness of $\omega(1-p)$ for a single prescribed normal positive
functional $\omega$ does not by itself imply smallness of
$\psi(1-p)$. Alternatively, if the exceptional projections
converge weak* to zero, Theorem~\ref{main} gives their convergence
to zero under $\psi$. Noncommutative Egoroff approximation also enters the proof of \cite[Theorem 4.3(ii)]{CHW}, which identifies \(\sigma\)-strong sequential convergence on \(M_{\mathrm{sa}}\) with convergence in the sequential order topology when \(M\) is \(\sigma\)-finite.
\end{remark}

All operators in
the sequences below belong to $M$.

\begin{definition}[Almost uniform convergence relative to a weight]\label{cutoff}
We say that $x_n\to x$ \emph{almost uniformly relative to $\varphi$} (or $\varphi$-a.u.\ for short) 
if, for every $\varepsilon>0$, there is a projection $e\in M$ such that
\[
 \varphi(1-e)<\varepsilon,
 \qquad \|(x_n-x)e\|\longrightarrow0.
\]
We say that $x_n\to x$ \emph{bilaterally almost uniformly relative to
$\varphi$} if, for every $\varepsilon>0$, there is a projection $e\in M$
such that
\[
 \varphi(1-e)<\varepsilon,
 \qquad \|e(x_n-x)e\|\longrightarrow0.
\]
We abbreviate these conditions as $\varphi$-a.u.\ and $\varphi$-b.a.u.,
respectively. The projection is fixed while $n$ tends to infinity.
Clearly $\varphi$-a.u.\ convergence implies $\varphi$-b.a.u.\ convergence.
\end{definition}

\begin{lemma}[Small-weight projections converge strongly to zero]
\label{small-weight-projections}
If $p_k\in M$ are projections and $\varphi(p_k)\to0$, then $p_k\to0$
strongly in every faithful normal representation of $M$.
\end{lemma}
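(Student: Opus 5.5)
It suffices to work in the GNS representation and then transfer. Strong convergence of a bounded net (here a sequence) of projections to zero is equivalent to $\omega(p_k)\to0$ for every normal positive functional $\omega$: indeed $\|p_k\xi\|^2=\langle p_k\xi,\xi\rangle=\omega_\xi(p_k)$. Conversely, if $\rho(p_k)\to0$ for all $\rho\in (M_*)_+$, then in any faithful normal representation each vector functional is such a $\rho$. So the statement is representation-independent. The plan is therefore to prove
\[
\rho(p_k)\to0\qquad(\rho\in (M_*)_+),
\]
equivalently $p_k\to0$ $\sigma$-strongly (note $p_k^*p_k=p_k$).

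First I verify this for the functionals $\varphi(a^*\,\cdot\,a)$ with $a\in\mathfrak n_\varphi$. The GNS representation $\pi_\varphi$ is faithful and normal, so the claim can be tested there. The natural approach uses the right multiplication structure: $\Lambda_\varphi(p_k a)=\pi_\varphi(p_k)\Lambda_\varphi(a)$. However, $\varphi(p_k)\to0$ controls $\Lambda_\varphi(p_k)$, not $\pi_\varphi(p_k)\Lambda_\varphi(a)$ directly. The better route is to compare with $\Lambda_\varphi(a)$ for $a$ in the Tomita algebra, or with $a$ analytic, using $\Lambda_\varphi(p_k a)=J\pi_\varphi(\sigma_{-i/2}(a))^*J\Lambda_\varphi(p_k)$. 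For $a$ in the Tomita algebra this gives
\[
\|\pi_\varphi(p_k)\Lambda_\varphi(a)\|\le\|\sigma_{-i/2}(a)\|\,\|\Lambda_\varphi(p_k)\|=\|\sigma_{-i/2}(a)\|\,\varphi(p_k)^{1/2}\to0 .
\]
Since $\Lambda_\varphi$ of the Tomita algebra is dense in $H_\varphi$ and $\|\pi_\varphi(p_k)\|\le1$, a standard $\varepsilon/3$ argument gives $\pi_\varphi(p_k)\xi\to0$ for every $\xi\in H_\varphi$, that is, strong convergence in the GNS representation.

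Then I transfer. The GNS representation is faithful and normal, and the $\sigma$-strong topology is intrinsic to $M$; on bounded sets it coincides with the strong topology in any faithful normal representation. Hence $p_k\to0$ strongly in every faithful normal representation.

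The main obstacle is the step controlling $\pi_\varphi(p_k)\Lambda_\varphi(a)$ by $\varphi(p_k)$, which needs the modular (right-module) bound. If I want to avoid Tomita theory, the fallback is Theorem~\ref{ldct-merged:criterion}: because $p_k$ is bounded, it suffices to show $\omega_g(p_k)\to0$ for bounded $g\ge0$ with $\varphi(g)<\infty$. To get this, I would bound
\[
\omega_g(p_k)=\varphi(g^{1/2}p_kg^{1/2})=\|\pi_\varphi(p_k)\Lambda_\varphi(g^{1/2})\|^2
\]
by a Cauchy--Schwarz estimate, which again needs the right-multiplication bound. So I would commit to the Tomita-algebra estimate above. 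Because the Tomita algebra is a core for $\Lambda_\varphi$ with dense range, this estimate is what makes the proof work.
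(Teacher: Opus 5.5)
Your proof is correct, but it takes a different route from the paper.

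The paper never enters the GNS space. It uses Haagerup's theorem that a normal weight is weak* lower semicontinuous on $M_+$. Any weak* cluster point $p\ge 0$ of $(p_k)$ in the compact positive unit ball then satisfies $\varphi(p)\le\liminf\varphi(p_k)=0$, so $p=0$ by faithfulness. Hence $p_k\to0$ weak*, and $\|p_k\xi\|^2=\langle p_k\xi,\xi\rangle$ gives strong convergence in every faithful normal representation.

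You instead use modular theory. You obtain the quantitative bound $\|\pi_\varphi(p_k)\Lambda_\varphi(a)\|\le\|\sigma_{i/2}(a)\|\,\varphi(p_k)^{1/2}$ for $a$ in the Tomita algebra. An $\varepsilon/3$ argument using density of $\Lambda_\varphi$ of the Tomita algebra extends this to all vectors, and you transfer via the intrinsic $\sigma$-strong topology. Both steps are sound.

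Two small points:
\begin{enumerate}
\item With the usual convention $\sigma_t=\Delta^{it}(\cdot)\Delta^{-it}$, the right-multiplication formula reads $\Lambda_\varphi(xa)=J\pi_\varphi(\sigma_{i/2}(a))^*J\Lambda_\varphi(x)$, not $\sigma_{-i/2}$. You can check this on $M_n$ with $\varphi=\operatorname{Tr}(D\,\cdot\,)$ and $\Lambda_\varphi(x)=xD^{1/2}$. The slip is harmless because Tomita-algebra elements are entire.
\item The formula requires $p_k\in\mathfrak n_\varphi$. This holds for all large $k$ since $\varphi(p_k)\to0$.
\end{enumerate}

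As for what each approach buys: the paper's argument is shorter and uses only lower semicontinuity and faithfulness, so it does not need semifiniteness at all. Yours needs the full Tomita--Takesaki machinery, and semifiniteness for it, which is available here. In exchange it gives an explicit rate $O(\varphi(p_k)^{1/2})$ on a dense set of vectors, rather than a purely compactness-based conclusion.
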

\begin{proof}
For weights on $M_+$, normality is known to be equivalent to weak* lower semicontinuity on $M_+$ \cite{Haag}.
The positive unit ball of $M$ is weak* compact. If a subnet
$p_{k_\alpha}$ converges weak* to $p$, then $p\ge0$ and
\[
 \varphi(p)\le\liminf_\alpha\varphi(p_{k_\alpha})=0.
\]
Faithfulness gives $p=0$. 
Thus zero is the only weak* accumulation point, and so by compactness we have 
$p_k\uw0$. In a faithful normal
representation, for every vector $\xi$,
\[
 \|p_k\xi\|^2=\langle p_k\xi,\xi\rangle\longrightarrow0.
\]
That is, we have  strong convergence to 0. 
\end{proof}

In particular,
$\varphi(1-e_k)\to0$ implies $e_k\to1$ strongly.

\begin{corollary}[Bounded spectral and quadratic-mean convergence]
\label{spectral-l2-strong}
Let $\varphi$ be a faithful normal semifinite weight on $M$,
and let $(y_n)\subset M$ satisfy $\sup_n\|y_n\|<\infty$.
Suppose that
\[
 \varphi\!\left(1_{(\varepsilon,\infty)}(|y_n|)\right)
 \longrightarrow0
 \qquad\text{for every }\varepsilon>0.
\]
Then $y_n\to0$ strongly in every faithful normal representation,
and hence weak*.

In particular, the same conclusions hold if
\[
 \varphi(y_n^*y_n)\longrightarrow0.
\]
Under either hypothesis,
\[
 \psi(y_n)\longrightarrow0
 \quad\text{for every sequentially normal state }\psi\text{ on }M.
\]
\end{corollary}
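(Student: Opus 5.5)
We would reduce everything to the spectral-projection criterion in Lemma~\ref{prop:projection-measure-convergence}, combined with Lemma~\ref{small-weight-projections}. Fix $\varepsilon>0$ and put $q_n=1_{(\varepsilon,\infty)}(|y_n|)$. The hypothesis says $\varphi(q_n)\to0$, so Lemma~\ref{small-weight-projections} shows that $q_n\to0$ strongly in every faithful normal representation. Since the $q_n$ are contractions, they also converge to zero weak*. This is exactly condition~(i) of Lemma~\ref{prop:projection-measure-convergence} with $x_n=y_n$ and $x=0$, and the sequence is norm bounded. That lemma therefore gives $y_n\to0$ $\sigma$-strongly. On bounded sets $\sigma$-strong convergence agrees with strong convergence in any faithful normal representation, so we obtain the strong conclusion, and Cauchy--Schwarz gives weak* convergence.

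If we want to avoid citing the lemma, there is a direct route to the strong convergence. Set $C=\sup_n\|y_n\|$. Functional calculus gives $y_n^*y_n\le\varepsilon^2 1+C^2q_n$, so for any vector $\xi$,
\[
\|y_n\xi\|^2\le\varepsilon^2\|\xi\|^2+C^2\|q_n\xi\|^2 .
\]
Letting $n\to\infty$ and then $\varepsilon\downarrow0$ yields $\|y_n\xi\|\to0$.

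For the quadratic-mean hypothesis, we use $\varepsilon^2q_n\le y_n^*y_n$, which again comes from functional calculus. Since $\varphi$ is monotone, $\varphi(q_n)\le\varepsilon^{-2}\varphi(y_n^*y_n)\to0$, and the first part applies.

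The last assertion, $\psi(y_n)\to0$ for every sequentially normal state $\psi$, follows from weak* convergence and Theorem~\ref{main}. Equivalently, it is the final clause of Lemma~\ref{prop:projection-measure-convergence}.

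The only step that needs care is the passage from $\varphi(q_n)\to0$ to $q_n\to0$ weak*. This is handled by Lemma~\ref{small-weight-projections}, which relies on faithfulness and normality of $\varphi$ through weak* lower semicontinuity. Everything else is routine functional calculus.
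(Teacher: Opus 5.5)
Your proof is correct and follows essentially the paper's argument. Lemma~\ref{small-weight-projections} makes the spectral projections weak* null, Lemma~\ref{prop:projection-measure-convergence} then gives $\sigma$-strong (hence strong and weak*) convergence, the quadratic-mean case follows from $\varepsilon^2\one_{(\varepsilon,\infty)}(|y_n|)\le y_n^*y_n$, and Theorem~\ref{main} handles sequentially normal states. Your optional direct estimate is just the right-hand functional-calculus inequality from the proof of that lemma, applied vectorwise.
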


\begin{proof}
By Lemma~\ref{small-weight-projections}, the spectral projections
converge weak* to zero. Then  Lemma~\ref{prop:projection-measure-convergence}
and Theorem~\ref{main} give the first assertion. The other then follows since    $\varepsilon^2\one_{(\varepsilon,\infty)}(|y_n|)\le y_n^*y_n$.
\end{proof}

\begin{theorem}[Bounded a.u.\ and b.a.u.\ convergence imply weak* convergence]
\label{bau-weak*}
Suppose $\sup_n\|x_n\|<\infty$ and $x\in M$. If $x_n\to x$
$\varphi$-b.a.u., then $x_n\uw x$. Consequently, the same conclusion
holds for $\varphi$-a.u.\ convergence.
\end{theorem}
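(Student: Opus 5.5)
We must show that norm-bounded $\varphi$-b.a.u.\ convergence implies weak* convergence. Put $y_n=x_n-x$ and $C=\sup_n\|y_n\|<\infty$. Since $\varphi$-a.u.\ convergence implies $\varphi$-b.a.u.\ convergence, it suffices to treat the b.a.u.\ case. We test against normal positive functionals $\omega$, since these span $M_*$.

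For each $k$, choose a projection $e_k$ with $\varphi(1-e_k)<1/k$ and $\|e_ky_ne_k\|\to0$ as $n\to\infty$. By Lemma~\ref{small-weight-projections}, $1-e_k\to0$ strongly, hence weak*. Because $\omega$ is normal, this gives $\omega(1-e_k)\to0$ as $k\to\infty$. Here the projections $e_k$ need not be increasing. That is harmless: the argument below only uses the single scalar $\omega(1-e_k)$ for each fixed $k$, so no monotone limit is needed.

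For fixed $k$ we split
\[
 y_n=e_ky_ne_k+e_ky_n(1-e_k)+(1-e_k)y_n .
\]
The first term gives $|\omega(e_ky_ne_k)|\le\omega(1)\|e_ky_ne_k\|\to0$. For the other two terms we use the Cauchy--Schwarz inequality for the positive functional $\omega$, in the form $|\omega(ab)|^2\le\omega(aa^*)\,\omega(b^*b)$. With $a=e_ky_n$ and $b=1-e_k$,
\[
 |\omega(e_ky_n(1-e_k))|\le C\,\omega(1)^{1/2}\omega(1-e_k)^{1/2}.
\]
With $a=1-e_k$ and $b=y_n$,
\[
 |\omega((1-e_k)y_n)|\le C\,\omega(1-e_k)^{1/2}\omega(1)^{1/2}.
\]
Letting $n\to\infty$ first, these estimates give
\[
 \limsup_n|\omega(y_n)|\le 2C\,\omega(1)^{1/2}\omega(1-e_k)^{1/2}.
\]
Letting $k\to\infty$ then yields $\omega(y_n)\to0$. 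Since normal positive functionals span $M_*$, we conclude $x_n\uw x$.

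The only delicate point is that each cutoff is small merely in $\varphi$, not in $\omega$. This is exactly where Lemma~\ref{small-weight-projections} is needed, together with normality of $\omega$; it also explains why faithfulness and normality of $\varphi$ enter the argument.
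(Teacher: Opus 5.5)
Your proposal is correct and follows essentially the same route as the paper: the same three-term decomposition of $y_n$ with respect to a $\varphi$-cutoff $e_k$, Cauchy--Schwarz on the two off-corner terms, Lemma~\ref{small-weight-projections} to get $\omega(1-e_k)\to0$, and the limits taken first in $n$ and then in $k$. The only differences are cosmetic: you use $1/k$ in place of $2^{-k}$, and you test against normal positive functionals rather than normal states.
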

\begin{proof} Put $y_n=x_n-x$ and $C=\sup_n\|y_n\|$. 
Choose global cutoffs $e_k$ with $\varphi(1-e_k)<2^{-k}$ and $\| e_ky_ne_k \| \to 0$ with $n$ for fixed
$k$. 
Lemma~\ref{small-weight-projections} gives $e_k\to1$ strongly.
 For a normal state $\rho$,
Cauchy--Schwarz applied to
$y_n=e_ky_ne_k+(1-e_k)y_n+e_ky_n(1-e_k)$ gives
\[
 |\rho(y_n)|\le\|e_ky_ne_k\|+2C\rho(1-e_k)^{1/2}.
\]
First let $n\to\infty$, then $k\to\infty$. Normal positive
functionals span $M_*$, proving weak* convergence.
\end{proof}

\begin{corollary}[Application to the dominated convergence formulations]
\label{bau-ldct}
As before suppose that $\sup_n\|x_n\|<\infty$ and $x\in M$. If $x_n\to x$ globally or locally $\varphi$-b.a.u., then
\[
 \psi(x_n)\longrightarrow\psi(x)
 \quad\text{for every sequentially normal state }\psi\text{ on }M.
\]
Also, for every positive affiliated $g$ with $\varphi(g)<\infty$ and
every $b\in M$,
\[
 \Omb(b^*x_nb)\longrightarrow\Omb(b^*xb).
\]
Under the same hypotheses one also has
\[
 \|h^{1/2}(x_n-x)h^{1/2}\|_1\longrightarrow0
 \qquad(h\in L^1(M)_+),
\]
where $L^1(M)$ is the Haagerup space; see
Theorem~{\rm \ref{ldct-addon:theorem}(i)}. \end{corollary}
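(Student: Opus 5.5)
The whole corollary reduces to one fact: the bounded sequence converges weak*. Once $x_n\uw x$, the first two assertions follow from results already in place, and only the $L^1$ assertion needs more work.

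For global $\varphi$-b.a.u.\ convergence, weak* convergence is exactly Theorem~\ref{bau-weak*}. For local $\varphi$-b.a.u.\ convergence, whose precise definition comes later, I would prove weak* convergence as follows. Take cutoffs inside corners $p_\alpha Mp_\alpha$ with $\varphi(p_\alpha)<\infty$. Using Lemma~\ref{ldct-merged:finite-projections}, choose such $p_\alpha\to 1$ strongly. In each corner, apply the argument of Theorem~\ref{bau-weak*} to $p_\alpha x_n p_\alpha$: one gets projections $e\le p_\alpha$ with $\varphi(p_\alpha-e)$ small and $\|e(x_n-x)e\|\to0$. Lemma~\ref{small-weight-projections}, applied inside the corner, gives $e_k\to p_\alpha$ strongly. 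The Cauchy--Schwarz estimate for a normal state $\rho$ then yields $p_\alpha x_np_\alpha\uw p_\alpha xp_\alpha$.

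To pass from the corners to $M$, fix a normal state $\rho$ and set $C=\sup_n\|x_n-x\|$. The estimate
\[
 |\rho(y)-\rho(p_\alpha yp_\alpha)|\le 2C\rho(1-p_\alpha)^{1/2}, \qquad y=x_n-x,
\]
is uniform in $n$. Letting $\alpha$ increase therefore gives $\rho(x_n-x)\to0$. This corner-to-global passage is the main obstacle, and the uniformity in $n$ is what makes it work.

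Given weak* convergence, convergence under every sequentially normal state $\psi$ follows from Theorem~\ref{main}, as recorded in Lemma~\ref{all-states}. The functional $y\mapsto\Omb(b^*yb)$ is normal by Lemma~\ref{ldct-merged:integral-construction}, since it is a normal positive functional composed with the weak*-continuous map $y\mapsto b^*yb$. Hence it passes to the weak* limit, which gives the second display via Theorem~\ref{ldct-merged:criterion}(vi).

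The $L^1$ assertion is stronger than weak* convergence, and the paper itself defers it to Theorem~\ref{ldct-addon:theorem}(i), so the corollary's proof can cite that result. If I proved it here, I would fix $h\in L^1(M)_+$ with associated normal state $\omega_h$ and a cutoff $e$ as above. I would split
\[
 y_n=e y_n e+(1-e)y_n+e y_n(1-e).
\]
The first term contributes at most $\Tr(h)\|ey_ne\|$ by Hölder's inequality. For the other two, Hölder gives
\[
 \|h^{1/2}(1-e)y_nh^{1/2}\|_1\le\|h^{1/2}(1-e)\|_2\,\|y_nh^{1/2}\|_2\le C\,\Tr(h)^{1/2}\,\omega_h(1-e)^{1/2},
\]
and the same bound holds for the third term. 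Since the cutoffs $e_k\to1$ strongly, $\omega_h(1-e_k)\to0$. Taking $\limsup_n$ first and then $k\to\infty$ gives the claim. In the local case one adds the corner compression $p_\alpha$ and bounds the resulting extra terms by $\omega_h(1-p_\alpha)^{1/2}$ in the same way.
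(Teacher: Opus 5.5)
Your proposal is correct and is essentially the paper's proof. The paper likewise reduces everything to weak* convergence (Theorem~\ref{bau-weak*} in the global case, Theorem~\ref{ldct-addon:theorem}(i) in the local case), then applies Theorem~\ref{main} and the normality of $a\mapsto(g^{1/2}\varphi g^{1/2})(b^*ab)$ from Lemma~\ref{ldct-merged:integral-construction}, and cites Theorem~\ref{ldct-addon:theorem}(i) for the $L^1$ assertion. The only difference is in the local case: you get weak* convergence from a separate corner-then-global Cauchy--Schwarz step, whereas Theorem~\ref{ldct-addon:theorem}(i) applies your three-term splitting with $e_k\le p$ directly in $M$, so that $\omega_h(1-e_k)\to\omega_h(1-p)$ and no extra corner terms arise. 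It thus proves the $L^1$ bound first and reads off weak* convergence from $|\omega_h(y_n)|\le\|h^{1/2}y_nh^{1/2}\|_1$.
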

\begin{proof} Apply Theorem~\ref{bau-weak*} or Theorem~\ref{ldct-addon:theorem}(i), followed by
Theorem~\ref{main}, for the first assertion.
The second follows from the normality of
$a\mapsto\Omb(b^*ab)$, established in
Lemma~\ref{ldct-merged:integral-construction}.
\end{proof}

Further comparisons with the spectral formulations in
\cite{BLS} are given in the subsection entitled ``Comparison with spectral convergence'' below.

\subsection{Almost uniform convergence and dominated convergence in the predual}
\label{predual:subsection}

The first principal result of this subsection is Theorem~\ref{predual:dct}:
almost uniform convergence in the predual, together with domination by a
fixed positive normal functional, implies convergence in the predual norm.
The reference state used to define convergence may be chosen independently
of the dominating functional. In the commutative case the convergence
condition is exactly a.e.\ convergence. We first give the theorem directly
in the ordered predual $M_*$.  We use this to give a nc LDCT in the Haagerup $L^1$-space, this is 
the second principal result of this subsection.  We will use the Haagerup realization  of $L^1$ to
connect it with the bounded convergence result of Corollary~\ref{bau-ldct}.

For related order-topology results in the predual, see
\cite[Theorem 5]{CH}. 
That theorem identifies the norm and order topologies on
$M_*^{\sa}$. In our proof below, the estimate
\eqref{predual:estimate} establishes norm convergence directly
from domination and almost uniform convergence.

For a projection $e\in M$ and $\phi\in M_*$, write
\[
 (e\phi e)(x)=\phi(exe)\qquad(x\in M).
\]
We say that $\phi$ is supported by $p$ if $\phi=p\phi p$.
Fix a nonzero $\sigma$-finite projection $p$ and a faithful normal state
on $N=pMp$, whose identity is $p$. Extend the state to $M$ by compression
and denote the extension by $\psi$; thus
\[
 \psi(x)=\psi(pxp),\qquad \psi(1)=\psi(p)=1,\qquad s(\psi)=p.
\]
The same convention applies to other functionals on the corner. Extension
by compression identifies $N_*$ isometrically and order isomorphically
with the functionals in $M_*$ supported by $p$: compression is contractive
and positive, and restriction to $N$ is its inverse on this range. 
The existence of suitable supporting corners is explained
after Theorem~\ref{predual:dct}.

\begin{definition}[Bilateral almost uniform convergence in the predual]
\label{predual:definition}
Let selfadjoint $\phi_n,\phi\in M_*$ be supported by $p$. Write
$\phi_n\baupre{\psi}\phi$ if, for every $\varepsilon>0$, there are a
projection $e\le p$ and numbers $\delta_n\ge0$ tending to zero such that
\begin{equation}\label{predual:cutoff}
 \psi(p-e)<\varepsilon,\qquad
 -\delta_n  \, e\psi e\le e(\phi_n-\phi)e\le\delta_n \, e\psi e
\end{equation}
for all sufficiently large $n$. The projection $e$ is independent of $n$.
\end{definition}

The order inequalities in \eqref{predual:cutoff} are equivalently
\begin{equation}\label{predual:tests}
 |(\phi_n-\phi)(x)|\le\delta_n\psi(x)
 \qquad(x\in eM_+e).
\end{equation}
Indeed, 
every positive element of
$eMe$ is the compression of a positive element of $M$. On $N_*$ this is Petz's almost uniform convergence; see
\cite[pp.~248--249]{Petz} and the definition recalled in
\cite[p.~1]{Bartoszewicz}. We use $\mathrm{b.a.u.}_*$ to distinguish
the bilateral compression of functionals from convergence of bounded
elements of $M$. Petz also proves that every norm-convergent sequence in
$N_*^{\mathrm{sa}}$ has a subsequence converging almost uniformly,
relative to the fixed faithful normal state, to the same limit
\cite[Corollary, p.~250]{Petz}. 

\begin{theorem}[Dominated convergence in the predual]
\label{predual:dct}
Suppose selfadjoint $\phi_n,\phi\in M_*$ are supported by $p$ and
$\phi_n\baupre{\psi}\phi$. Let $\omega\in M_*^+$ be supported by $p$,
and suppose that
\[
 -C\omega\le\phi_n\le C\omega\qquad(n\ge1)
\]
for some $0\le C<\infty$. Then
\[
 -C\omega\le\phi\le C\omega,\qquad
 \|\phi_n-\phi\|\longrightarrow0,\qquad
 \phi_n(1)\longrightarrow\phi(1).
\]
\end{theorem}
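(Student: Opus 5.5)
The plan is to split $\phi_n-\phi$ into a compressed part $e(\phi_n-\phi)e$, which the cutoff hypothesis makes small, and a remainder near $p-e$, which domination by $C\omega$ makes small. The one point needing care is that the cutoff is small for $\psi$, while the domination is controlled by $\omega$. To avoid assuming in advance that $\phi$ is dominated, I will first show that $(\phi_n)$ is Cauchy in norm, and only afterwards identify the limit with $\phi$.

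\emph{Step 1 (Cauchy--Schwarz estimate off the cutoff).} Let $\sigma\in M_*^+$ satisfy $\sigma\le 2C\omega$, let $e\le p$ be a projection, and let $\sigma$ be supported by $p$. Write $x=pxp=exe+(p-e)xp+ex(p-e)$. Cauchy--Schwarz then gives
\[
 |\sigma(x)-\sigma(exe)|\le 2\|x\|\,\sigma(p)^{1/2}\sigma(p-e)^{1/2}
 \le 4C\|x\|\,\omega(1)^{1/2}\omega(p-e)^{1/2}.
\]
Since $-C\omega\le\phi_n\le C\omega$, each $\sigma_n=\phi_n+C\omega$ is positive and dominated by $2C\omega$. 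Also $\phi_n-\phi_m=\sigma_n-\sigma_m$. Hence
\[
 \|(\phi_n-\phi_m)-e(\phi_n-\phi_m)e\|\le 8C\,\omega(1)^{1/2}\omega(p-e)^{1/2}.
\]

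\emph{Step 2 (choosing the cutoffs).} Definition~\ref{predual:definition} provides projections $e_k\le p$ with $\psi(p-e_k)<2^{-k}$, together with the corresponding $\delta^{(k)}_n\to0$. Apply Lemma~\ref{small-weight-projections} in $N=pMp$, where $\psi$ is a faithful normal state. It gives $p-e_k\to0$ strongly, hence weak*. Since $\omega$ is normal, this yields $\omega(p-e_k)\to0$. This transfer from $\psi$-smallness to $\omega$-smallness is the step I expect to be the crux. It is also why the reference state may be chosen independently of $\omega$.

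\emph{Step 3 (norm of the compressed part).} Fix $k$ and write $e=e_k$. By the cutoff condition \eqref{predual:cutoff}, applied to $\phi_n-\phi$ and to $\phi_m-\phi$,
\[
 -(\delta_n+\delta_m)\,e\psi e\le e(\phi_n-\phi_m)e\le(\delta_n+\delta_m)\,e\psi e .
\]
A selfadjoint functional $\rho$ with $-\tau\le\rho\le\tau$, where $\tau\ge0$, satisfies $\|\rho\|\le 2\tau(1)$, because $\rho=(\tau+\rho)-\tau$ is a difference of positive functionals. Therefore $\|e(\phi_n-\phi_m)e\|\le 2(\delta_n+\delta_m)$. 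Combining with Step 1 gives
\[
 \limsup_{n,m}\|\phi_n-\phi_m\|\le 8C\,\omega(1)^{1/2}\omega(p-e_k)^{1/2}.
\]
Letting $k\to\infty$ shows that $(\phi_n)$ is Cauchy in norm, so $\phi_n\to\phi'$ in norm for some $\phi'\in M_*$.

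\emph{Step 4 (identifying the limit).} The order interval $[-C\omega,C\omega]$ is norm closed, so $-C\omega\le\phi'\le C\omega$, and $\phi'$ is supported by $p$. For each fixed $k$, the cutoff condition gives $\|e_k(\phi_n-\phi)e_k\|\le 2\delta^{(k)}_n\to0$. Therefore $e_k\phi' e_k=e_k\phi e_k$. Because $e_k\to p$ strongly, the compressions $e_k\chi e_k$ converge in norm to $\chi$ for every normal $\chi$ supported by $p$. To see this, write $\chi$ as a combination of positive functionals and use the same Cauchy--Schwarz estimate as in Step 1, with $\omega$ replaced by those positive parts. Hence $\phi'=\phi$, which gives both the domination of $\phi$ and $\|\phi_n-\phi\|\to0$. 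Finally, $|\phi_n(1)-\phi(1)|\le\|\phi_n-\phi\|\to0$.
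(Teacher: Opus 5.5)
Your proof is correct. Its core estimate is the same as the paper's: the splitting $x=exe+(p-e)x+ex(p-e)$ on the corner, Cauchy--Schwarz for positive functionals off the cutoff, the compressed bound coming from the order inequalities in \eqref{predual:cutoff}, and the transfer from $\psi(p-e_k)\to0$ to $\omega(p-e_k)\to0$ via weak* convergence of $p-e_k$ to $0$ in $N$.

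Where you differ is in how the limit $\phi$, not a priori dominated, is handled. Your stated reason for the detour is to avoid assuming in advance that $\phi$ is dominated. The paper meets this by enlarging the majorant to $\Omega=C\omega+|\phi|$. Then $-\Omega\le\phi_n-\phi\le\Omega$, and writing $\phi_n-\phi=u_n-v_n$ with $u_n+v_n=\Omega$ gives at once the explicit bound \eqref{predual:estimate} on $\|\phi_n-\phi\|$ itself. The domination $-C\omega\le\phi\le C\omega$ is read off afterwards from norm closedness of the positive cone.

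You instead run the estimate on the differences $\phi_n-\phi_m=\sigma_n-\sigma_m$, using only $C\omega$ as majorant. You then invoke completeness of $M_*$ and identify the norm limit through $e_k\phi'e_k=e_k\phi e_k$ and $e_k\chi e_k\to\chi$. This costs an extra step. The identification step also reuses the same Cauchy--Schwarz estimate on positive functionals decomposing $\phi$, so a decomposition of $\phi$ still enters. What you gain is modest: you never need the Jordan parts of $\phi$ or their support in $p$. Any decomposition into positive normal functionals, compressed by $p$, suffices.

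One small slip in Step 3: writing $\rho=(\tau+\rho)-\tau$ only gives $\|\rho\|\le 3\tau(1)$, not $2\tau(1)$. The symmetric decomposition $\rho=\tfrac12(\tau+\rho)-\tfrac12(\tau-\rho)$, which the paper uses for \eqref{predual:compressed-norm}, gives $\|\rho\|\le\tau(1)$. The constant plays no role in your argument.
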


\begin{proof}
Put $\eta_n=\phi_n-\phi$ and
$\Omega=C\omega+|\phi|$, where $|\phi|=\phi^++\phi^-$ is the sum of
the Jordan parts of $\phi$. These parts are supported by $p$, by the
order isomorphism $N_*\cong pM_*p$. Since
$-|\phi|\le\phi\le|\phi|$, we have $-\Omega\le\eta_n\le\Omega$.
Thus
\[
 u_n=\frac{\Omega+\eta_n}{2},\qquad
 v_n=\frac{\Omega-\eta_n}{2}
\]
are positive normal functionals, with $u_n-v_n=\eta_n$ and
$u_n+v_n=\Omega$.

Fix a cutoff $e$ and constants $\delta_n$ as in
\eqref{predual:cutoff}, and put $q=p-e$.
The positive functionals
$(\delta_n e\psi e+e\eta_ne)/2$ and
$(\delta_n e\psi e-e\eta_ne)/2$ have difference $e\eta_ne$ and
sum $\delta_n e\psi e$. The triangle inequality and the formula
$\|\theta\|=\theta(p)$ for $\theta\in N_*^+$ therefore give
\begin{equation}\label{predual:compressed-norm}
 \|e\eta_ne\|\le\delta_n\psi(e).
\end{equation}
For $x\in N$ with $\|x\|\le1$, Cauchy--Schwarz for $u_n,v_n$,
followed by scalar Cauchy--Schwarz, gives
\begin{align*}
 |\eta_n(qx)|
 &\le \sqrt{u_n(q)u_n(x^*x)}+\sqrt{v_n(q)v_n(x^*x)}\\
 &\le \sqrt{\Omega(q)\Omega(x^*x)}
 \le \sqrt{\Omega(q)\Omega(p)}.
\end{align*}
Similarly,
$|\eta_n(exq)|\le\sqrt{\Omega(exx^*e)\Omega(q)}
\le\sqrt{\Omega(p)\Omega(q)}$.
Using $x=exe+qx+exq$, and the equality of the norms in $N_*$ and
$M_*$, we obtain
\begin{equation}\label{predual:estimate}
 \|\eta_n\|\le\delta_n\psi(e)
       +2\sqrt{\Omega(p)\Omega(p-e)}.
\end{equation}

For each $k$, choose $e_k\le p$ from Definition \ref{predual:definition} with
$\psi(p-e_k)<2^{-k}$, and write $\delta_{k,n}\to0$ for its constants.
We have $p-e_k\to0$ weak*: every weak* limit point in the positive
unit ball of $N$ is positive and has $\psi$-value zero, hence is zero
by faithfulness.  Weak* compactness then gives convergence.
Normality of $\Omega$ yields $\Omega(p-e_k)\to0$.
First let $n\to\infty$ in \eqref{predual:estimate} with $k$ fixed,
and then let $k\to\infty$. This proves $\|\eta_n\|\to0$.
The positive cone of $M_*$ is norm closed, so the inequalities
$C\omega\pm\phi_n\ge0$ pass to the limit and give
$-C\omega\le\phi\le C\omega$.
Finally, $|\phi_n(1)-\phi(1)|\le\|\eta_n\|\to0$.
\end{proof}

Theorem~\ref{predual:dct} can also be deduced from Petz's
transfer theorem \cite[Theorem~2, p.~250]{Petz}, together with
normalization as in Proposition~\ref{predual:normalization}
and the bounded compression estimate
\eqref{ldct-addon:estimate} below. For this purpose one uses
the faithful normal state proportional to
$\psi+C\omega+|\phi|$: the assumed predual almost uniform
convergence also holds relative to this state, and the errors
$\phi_n-\phi$ are uniformly order dominated by it.
The proof above gives a direct argument in the ordered predual.

For complex functionals one may apply the theorem separately to real and
imaginary parts whenever each part satisfies its domination and convergence
hypotheses. Then
$\|\phi_n-\phi\|\le\|\operatorname{Re}(\phi_n-\phi)\|
+\|\operatorname{Im}(\phi_n-\phi)\|\to0$.

In the classical case of $M=L^\infty(X,\mu)$ with $\mu$ localizable one can use Egoroff to show that this predual convergence is equivalent to a.e.\ convergence, and that  the last result is
a variant of the classical LDCT.  Indeed write $p=1_E, d\psi=r \, d\mu$, and $d\varphi_n=f_n\,d\mu, d\varphi=f \, d\mu$. Faithfulness on $pMp$ means that $r>0$ almost everywhere on $E$. Definition \ref{predual:definition} is precisely almost uniform convergence of $(f_n-f)/r$ to zero for the probability measure $r \, d\mu$. By Egoroff’s theorem and the converse implication from almost uniform to almost everywhere convergence, this is equivalent to $f_n\to f$ $\mu$-almost everywhere on $E$.

\smallskip

\noindent\emph{Supporting corners.}
Recall that if $0\ne\omega\in M_*^+$ and $p=s(\omega)$, then $pMp$
is $\sigma$-finite and $\omega/\omega(1)$ is a faithful normal state
on $pMp$. Moreover, $-C\omega\le\phi\le C\omega$ implies
$\phi=p\phi p$, so $p$  supports every functional dominated
in this way by $\omega$.

\smallskip
\noindent\emph{Normalization in the Haagerup realization of $L^1$.}
We now specialize the reference state to the normalized majorant and
compare the predual theorem with bounded convergence.
Throughout the remainder of the subsection fix
$0\ne g\in L^1(M)_+$ in the Haagerup space of Subsection~\ref{HL1}, and set
\begin{equation}\label{predual:haagerup-notation}
 \omega_h(x)=\Tr(hx),\qquad m=\Tr(g),\qquad
 p=s(g)=s(\omega_g),\qquad N=pMp.
\end{equation}
Here $\omega_h$ is defined for every $h\in L^1(M)$, and $\Tr$ is the
canonical integral on $L^1(M)$, not the semifinite trace on the crossed
product. Put
\begin{equation}\label{predual:reference-state}
 \psi_g(x)=m^{-1}\omega_g(x)\qquad(x\in M).
\end{equation}
Its restriction to $N$ is faithful, and
$\psi_g(a)=m^{-1}\Tr(g^{1/2}ag^{1/2})$ for $a\in N$. If $M$ has a faithful normal semifinite trace $\tau$, its restriction
to $N$ is again such a trace. 
Indeed finite-trace approximants below a positive element of $N$ given by semifiniteness in $M$ already lie in $N$, since they are supported by $p$.

We use the standard isometric order identification
$L^1(N)=pL^1(M)p$; see \cite[Section~1.2 and Remark~3.2]{HJX}.
By the support observation above, $-Cg\le f\le Cg$ implies $f=pfp$,
so every dominated density lives in  the appropriate corner.

\begin{proposition}[Normalization by a fixed majorant]
\label{predual:normalization}
The map $J_g:N\to L^1(M)$, $J_g(a)=g^{1/2}ag^{1/2}$, is injective,
positive, linear, and adjoint preserving, with $\|J_g(a)\|_1\le m\|a\|$.
For every $C\ge0$ it maps $[-Cp,Cp]_N$ bijectively onto
$[-Cg,Cg]_{L^1(M)}$.
Equivalently, $-Cg\le f\le Cg$ precisely when
$f=g^{1/2}ag^{1/2}$ for a unique selfadjoint $a\in N$ with $\|a\|\le C$.
On bounded subsets $J_g$  is a homeomorphism from the weak* topology to
the relative weak $L^1(M)$ topology. In particular, for a norm-bounded
sequence $(a_n)$ and $a\in N$,
\begin{equation}\label{predual:weak-equivalence}
 a_n  \longrightarrow a \; {\rm weak*} 
 \quad\Longleftrightarrow\quad
 J_g(a_n)\longrightarrow J_g(a)\ \text{weakly in }L^1(M).
\end{equation}
Write $I_g=J_g(N)$ and $D_g=J_g^{-1}:I_g\to N$.
\end{proposition}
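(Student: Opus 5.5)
The plan is to transfer everything to the GNS-type picture of $\omega_g$ on the corner $N$, using the Haagerup facts already recalled in Subsection~\ref{HL1}.

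\emph{Algebraic properties and the norm bound.} Linearity, positivity and adjoint preservation of $J_g$ are immediate from the functional calculus in Haagerup's $L^p$ framework: for $a\ge0$ we have $J_g(a)=(a^{1/2}g^{1/2})^*(a^{1/2}g^{1/2})\ge0$, and $J_g(a)^*=g^{1/2}a^*g^{1/2}$. H\"older's inequality gives $\|J_g(a)\|_1\le\|g^{1/2}\|_2\|a\|\|g^{1/2}\|_2=m\|a\|$. For injectivity, suppose $J_g(a)=0$. Then for every $b,c\in N$ we get $0=\Tr(c\,g^{1/2}ag^{1/2}b)=\Tr(g^{1/2}b\,c\,g^{1/2}a)$. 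The vectors $g^{1/2}N$ and $Ng^{1/2}$ are dense in $pL^2(M)$ and $L^2(M)p$ respectively, because $s(g)=p$; this is the standard cyclicity/separation of $g^{1/2}$ for the faithful state $\psi_g$ on $N$. It follows that $\Tr(\xi a\eta)=0$ for all $\xi,\eta$ in dense subsets, and hence $a=0$. Equivalently, $\omega_g(b^*ac)=0$ for all $b,c\in N$, so faithfulness of $\psi_g$ on $N$ forces $a=0$.

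\emph{Order intervals.} If $a=a^*\in N$ and $\|a\|\le C$, then $-Cp\le a\le Cp$, and positivity of $J_g$ with $J_g(p)=g$ gives $-Cg\le J_g(a)\le Cg$. The converse is the main obstacle. Given $-Cg\le f\le Cg$, we already know $f=pfp$ by the support observation, so $f$ corresponds to a normal functional $\phi_f=\Tr(f\,\cdot)$ with $-C\omega_g\le\phi_f\le C\omega_g$. I would use the Radon--Nikodym theorem for dominated normal functionals, in its bounded ``sandwich'' form. Represent $\omega_g$ through the cyclic vector $\xi_g=g^{1/2}\in pL^2(M)p$ in standard form. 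Consider the sesquilinear form $(b\xi_g,c\xi_g)\mapsto\phi_f(c^*b)$ on $N\xi_g$. Domination bounds it by $C\|b\xi_g\|\|c\xi_g\|$, so it defines a selfadjoint operator $T$ of norm at most $C$ in the commutant $N'$. The modular conjugation $J$ then converts $T$ into $a=JTJ\in N$, and we check that $f=g^{1/2}ag^{1/2}$. Concretely, in the Haagerup picture the identity $\Tr(g^{1/2}ag^{1/2}x)=\langle xg^{1/2},g^{1/2}a\rangle$-type formulas express right multiplication by $a$ on $L^2$.

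A second route to the same converse is Douglas factorization, as in the proof of Proposition~\ref{bounded-weight}. Apply the order bounds $0\le f+Cg\le 2Cg$ and write $f+Cg=g^{1/2}kg^{1/2}$ with $0\le k\le 2Cp$. The Douglas argument carries over verbatim to $\tau$-measurable operators on the crossed product, because $(f+Cg)^{1/2}=w\,g^{1/2}$ with $\|w\|\le\sqrt{2C}$. Setting $a=k-Cp$ then works. One still has to check that $k$ lies in $N$ rather than merely in the crossed product, and I expect this is where the care lies. The check rests on the fact that both $f$ and $g$ have dual-action degree $-1$, so $w$ is invariant under the dual action and hence lies in $M$. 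Uniqueness follows from injectivity.

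\emph{Topology.} The map $J_g$ is weak*-to-weak continuous, since for $y\in M=L^1(M)^*$ we have $\Tr(yJ_g(a))=\Tr(g^{1/2}yg^{1/2}a)$, and $g^{1/2}yg^{1/2}\in L^1(N)\cong N_*$. On the weak*-compact set $\{a\in N:\|a\|\le C\}$, injectivity and continuity into a Hausdorff space give a homeomorphism onto its image. The equivalence \eqref{predual:weak-equivalence} then follows for bounded sequences. The forward direction holds by continuity. For the reverse direction, every weak* cluster point $b$ of $(a_n)$ satisfies $J_g(b)=J_g(a)$, so $b=a$, and compactness gives $a_n\to a$ weak*.
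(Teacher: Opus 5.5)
Your proposal is correct and is essentially the paper's argument: the bounded Radon--Nikodym theorem in the Haagerup standard form with cyclic separating vector $\xi=g^{1/2}\in L^2(N)$, the identification $\lambda(N)'=\rho(N)$ with the identity $\langle\rho(a)x\xi,y\xi\rangle=\Tr(J_g(a)y^*x)$, and weak* compactness of balls plus injectivity for the topological part. The differences are cosmetic: you bound the hermitian form of $\phi_f$ directly by $C\|b\xi\|\,\|c\xi\|$ where the paper applies Arveson's positive version to $h=(f+Cg)/(2C)$ and shifts, you use left multiplication on $\xi N$ instead of right multiplication on $N\xi$, and you should note that $g^{1/2}N$ and $Ng^{1/2}$ are dense in $pL^2(M)p=L^2(N)$, not in $pL^2(M)$ and $L^2(M)p$, which still suffices since $N$ acts faithfully on $L^2(N)$.
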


\begin{proof}
H\"older's inequality and the standard $L^p$ product operations give
$\|J_g(a)\|_1\le m\|a\|$, linearity, positivity, and preservation of
adjoints; see \cite[Section~1.2]{HJX}.
Put $\xi=g^{1/2}\in L^2(N)$.  Of course we use the inner product 
$\langle u,v\rangle=\Tr(v^*u)$.
In the Haagerup standard form, left multiplication $\lambda(x)u=xu$
and right multiplication $\rho(x)u=ux$ are faithful normal actions
satisfying $\lambda(N)'=\rho(N)$; $\rho$ is a $*$-anti-representation.
Since $\omega_g$ is faithful, $\xi$ is separating and cyclic, so
$N\xi$ is dense; see \cite[Theorem~11.30 and Lemma~3.9(1)]{Hiai}.
For $a,x,y\in N$, cyclicity of the pairing gives
\begin{equation}\label{predual:matrix-coefficient}
 \langle\rho(a)x\xi,y\xi\rangle
 =\Tr(\xi y^*x\xi a)=\Tr(J_g(a)y^*x).
\end{equation}
Thus $J_g(a)=0$ implies $\rho(a)=0$, hence $a=0$.

If $0\le h\le g$, the bounded Radon--Nikodym theorem
\cite[Theorem~1.4.2]{Arveson}, applied to
$0\le\omega_h\le\omega_g$, gives $T\in\lambda(N)'$ with
$0\le T\le I$ and
$\omega_h(x)=\langle T\lambda(x)\xi,\xi\rangle$ for $x\in N$.
Write $T=\rho(b)$, where $0\le b\le p$.
Equation~\eqref{predual:matrix-coefficient}, with $y=p$, gives
$\omega_h=\omega_{J_g(b)}$, so $h=J_g(b)$ by the predual identification.
Together with positivity and $J_g(p)=g$, this proves
$J_g([0,p])=[0,g]$.
For $C>0$ and $-Cg\le f\le Cg$, put $h=(f+Cg)/(2C)$,
write $h=J_g(b)$ with $0\le b\le p$, and set $a=2Cb-Cp$.
Then $f=J_g(a)$ and $-Cp\le a\le Cp$, equivalently $a=a^*$ and
$\|a\|\le C$. Conversely, positivity applied to $Cp\pm a$ gives
$Cg\pm J_g(a)\ge0$. Uniqueness follows from injectivity of $J_g$.
For $C=0$ both intervals consist of zero.

Finally, for $z\in M$,
\[
 \Tr(J_g(a)z)=\Tr\bigl(a\,g^{1/2}(pzp)g^{1/2}\bigr).
\]
The functional  on the right side here belongs to $L^1(N)=N_*$, so $J_g$ is
weak*-to-weak continuous. Its restriction to each closed norm ball
is a continuous injection from a weak* compact space into a Hausdorff
space, hence a homeomorphism onto its image. This proves the remaining
assertions.
\end{proof}

In these coordinates, $\omega_{f_n}\baupre{\psi_g}\omega_f$ means
that, for every $\varepsilon>0$, there are a projection $e\le p$ and
$c_n\ge0$ tending to zero such that
\begin{equation}\label{predual:density-cutoff}
 \omega_g(p-e)<\varepsilon m,\qquad
 -c_n ege\le e(f_n-f)e\le c_n ege
\end{equation}
eventually in $n$.
Indeed, for $h\in L^1(M)$ and $x\in M$, the tracial property of the
Haagerup pairing \cite[Section~1.2]{HJX} gives
\[
 \omega_{ehe}(x)=\Tr(ehex)=\Tr(hexe)
 =\omega_h(exe)=(e\omega_he)(x).
\]
Thus $\omega_{ehe}=e\omega_he$. Since the density of $\psi_g$ is
$g/m$, we have $c_n=\delta_n/m$.
This is Definition~\ref{predual:definition} expressed in densities.

For $a_n,a\in N$, write $a_n\bau{\psi_g}a$ if, for every
$\varepsilon>0$, there is a projection $r\in N$ such that
\[
 \psi_g(p-r)<\varepsilon,\qquad \|r(a_n-a)r\|\longrightarrow0.
\]
This is the special case of $\varphi$-b.a.u.\ convergence defined in Definition \ref{cutoff},
applied to the algebra $N$, whose identity is $p$, with
$\varphi=\psi_g|_N$. The subscript $*$ distinguishes the predual
convergence of Definition~\ref{predual:definition}.

The qualitative predual-to-algebra b.a.u.\ implication in the
following lemma is Petz's transfer theorem
\cite[Theorem~2, p.~250]{Petz}, expressed in Haagerup language.
Petz states this implication and refers its proof to his 1985
preprint. We give an explicit projection construction and prove
the accompanying mass and order estimates.

\begin{lemma} \label{predual:transfer}
For each projection $e\in M$, let $r=s(g^{1/2}eg^{1/2})\in M$.
Then $r\le p$ and
\begin{equation}\label{predual:mass-transfer}
 \omega_g(p-r)\le\omega_g(1-e).
\end{equation}
For every $b=b^*\in N$ and every $\delta\ge0$,
\begin{equation}\label{predual:order-transfer}
 -\delta ege\le eJ_g(b)e\le\delta ege
 \quad\Longleftrightarrow\quad \|rbr\|\le\delta.
\end{equation}
In particular, for selfadjoint $f_n,f\in I_g$, predual convergence
implies b.a.u.\ convergence of the coefficients in $N$:
\[
 \omega_{f_n}\baupre{\psi_g}\omega_f
 \quad\Longrightarrow\quad
 D_g(f_n)\bau{\psi_g}D_g(f).
\] 
\end{lemma}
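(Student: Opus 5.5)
The plan is to realize everything through the element $k=eg^{1/2}\in L^2(M)$ and its polar decomposition, and then to reduce the order transfer to Proposition~\ref{predual:normalization} applied to a new majorant. Put $t=(g^{1/2}eg^{1/2})^{1/2}$, so that $t^2=k^*k=g^{1/2}eg^{1/2}\in L^1(M)_+$ and $kk^*=ege$. Then $r=s(t^2)=s(t)$. Write $k=u\,t$ with $u\in M$ a partial isometry, $u^*u=r$ and $uu^*=s(ege)\le e$. Since $g^{1/2}eg^{1/2}=p(g^{1/2}eg^{1/2})p$, the support satisfies $r\le p$, so $r\in N$.

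\emph{Mass estimate.} Since $e\le1$, we have $g^{1/2}eg^{1/2}\le g$, and hence $r\,g^{1/2}eg^{1/2}\,r\le rgr$. The left side equals $g^{1/2}eg^{1/2}$ because $r$ is its support. Taking $\Tr$ and using the tracial property of the pairing gives $\omega_g(e)\le\omega_g(r)$. Since $\omega_g(1)=\omega_g(p)$, subtracting yields $\omega_g(p-r)\le\omega_g(1-e)$, which is \eqref{predual:mass-transfer}.

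\emph{Order transfer.} For $b=b^*\in N$ we have
\[
eJ_g(b)e=k\,b\,k^*=u\,(tbt)\,u^*,\qquad ege=u\,t^2u^*,
\]
and $tbt=t(rbr)t$. Conjugation by $u^*$, respectively by $u$, is order preserving and inverts these relations, because $u^*u=r$ and $rt=t=tr$. Hence the left side of \eqref{predual:order-transfer} is equivalent to
\[
-\delta t^2\le t(rbr)t\le\delta t^2 .
\]
This is where I expect the main difficulty: $t$ is an unbounded operator in the crossed product, so we cannot simply cancel it. The plan is to avoid any direct cancellation by applying Proposition~\ref{predual:normalization} with $g$ replaced by the majorant $t^2\in L^1(M)_+$. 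Its support is $r$, and $rMr$ plays the role of $N$. That proposition says that $a\mapsto tat$ is injective on $rMr$ and maps $[-\delta r,\delta r]_{rMr}$ bijectively onto $[-\delta t^2,\delta t^2]$. Applied to $a=rbr$, it shows that the displayed inequality holds exactly when $-\delta r\le rbr\le\delta r$, that is, when $\|rbr\|\le\delta$. The case $\delta=0$ is covered by injectivity.

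\emph{The implication.} Suppose $\omega_{f_n}\baupre{\psi_g}\omega_f$, and fix $\varepsilon>0$. By \eqref{predual:density-cutoff} there are a projection $e\le p$ and numbers $c_n\to0$ with $\psi_g(p-e)<\varepsilon$ and $-c_nege\le e(f_n-f)e\le c_nege$ for all large $n$. Put $b_n=D_g(f_n)-D_g(f)\in N$, so that $J_g(b_n)=f_n-f$. The order transfer gives $\|r(b_n)r\|\le c_n\to0$. The mass estimate, together with $e\le p$, gives
\[
\psi_g(p-r)\le\psi_g(1-e)=\psi_g(p-e)<\varepsilon .
\]
Since $r\in N$, this is exactly $D_g(f_n)\bau{\psi_g}D_g(f)$.
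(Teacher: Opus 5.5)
Your proposal is correct and is essentially the paper's argument. Like the paper, you use a polar decomposition of $eg^{1/2}$ (the paper decomposes its adjoint $g^{1/2}e$) to reduce the order transfer to Proposition~\ref{predual:normalization} for a new majorant. The differences are cosmetic:
\begin{itemize}
\item You apply the proposition to $g^{1/2}eg^{1/2}$ on $rMr$ after conjugating the $L^1$ inequalities by the partial isometry. The paper instead applies it to $ege$ on $s(ege)Ms(ege)$ and transports the norm to $rMr$ via $c\mapsto vcv^*$.
\item Your mass estimate (compress $g^{1/2}eg^{1/2}\le g$ by $r$ and apply $\Tr$) replaces the paper's $L^2$ computation $(p-r)g^{1/2}=(p-r)g^{1/2}(1-e)$.
\end{itemize}
Two small points need a word. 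First, the case $g^{1/2}eg^{1/2}=0$: there the proposition, which is stated for a nonzero majorant, does not literally apply, but both sides of the equivalence hold trivially. Second, the selfadjointness of $b_n$, needed to invoke the order transfer, follows from injectivity and adjoint preservation of $J_g$.
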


\begin{proof}
Put $\xi=g^{1/2}\in L^2(M)$ and take the polar decomposition
\[
 z=\xi e=vk^{1/2},\qquad k=z^*z=ege\in L^1(M)_+.
\]
It is known (see Section 1.2 in \cite{HJX}) that the polar partial isometry of a Haagerup $L^2(M)$ element belongs to
$M$. 
Thus
\[
 t=v^*v=s(k),\qquad r=vv^*=s(zz^*)=s(\xi e\xi)
\]
are projections in $M$. Since $pz=z$, its left support $r$ is below $p$.
For $b=b^*\in N$, associativity of the $L^p$ products gives
\begin{equation}\label{predual:polar-sandwich}
 eJ_g(b)e=z^*bz=k^{1/2}(v^*bv)k^{1/2},
 \qquad v^*bv\in(tMt)_{\sa}.
\end{equation}
Suppose first that $k\ne0$.
Proposition~\ref{predual:normalization}, applied with $k$ in place of
$g$, says that $J_k(c)=k^{1/2}ck^{1/2}$ is injective on $tMt$ and maps
$[-\delta t,\delta t]$ onto $[-\delta k,\delta k]$.
Consequently,
\[
 -\delta k\le J_k(v^*bv)\le\delta k
 \quad\Longleftrightarrow\quad
 -\delta t\le v^*bv\le\delta t.
\]
The forward implication uses surjectivity to find an element  in the
stated interval with the same image, and injectivity to identify it
with $v^*bv$. The reverse implication uses positivity of $J_k$.
Since $v^*bv$ is selfadjoint, the last bounds are equivalent to
$\|v^*bv\|\le\delta$.
The maps $c\mapsto vcv^*$ and $d\mapsto v^*dv$ are inverse
$*$-isomorphisms between $tMt$ and $rMr$, so they preserve the norm.
As $v(v^*bv)v^*=rbr$, this proves \eqref{predual:order-transfer}.
If $k=0$, then $z=0$ and $r=s(z z^*) = 0$, and both sides hold automatically.
Note that $J_g(b)e =0$ since $g^{1/2} e = z = 0$.

To prove \eqref{predual:mass-transfer}, $rz=z$ gives $(p-r)\xi e=0$, hence
$(p-r)\xi=(p-r)\xi(1-e)$.
Left multiplication by $p-r$ is contractive on $L^2(M)$, and cyclicity
of the pairing gives
\begin{align*}
 \omega_g(p-r)
 &=\Tr\bigl(\xi(p-r)\xi\bigr)=\|(p-r)\xi\|_2^2\\
 &=\|(p-r)\xi(1-e)\|_2^2
 \le\|\xi(1-e)\|_2^2\\
 &=\Tr\bigl((1-e)g(1-e)\bigr)=\omega_g(1-e).
\end{align*}
This proves \eqref{predual:mass-transfer}.
Finally, set $a_n=D_g(f_n)$ and $a=D_g(f)$; these are selfadjoint by
injectivity and preservation of adjoints of $J_g$.
Given $\varepsilon>0$, choose $e,c_n$ as in
\eqref{predual:density-cutoff}, and put $r=s(g^{1/2}eg^{1/2})$.
Then $r\le p$, and $r$ is independent of $n$.
Since $e\le p=s(g)$ and $\omega_g(1-p)=0$, we have
$\omega_g(1-e)=\omega_g(p-e)$.
Applying \eqref{predual:mass-transfer} and \eqref{predual:order-transfer},
the latter with $b=a_n-a$ and $\delta=c_n$, gives
\[
 \psi_g(p-r)<\varepsilon,\qquad
 \|r(a_n-a)r\|\le c_n\longrightarrow0,
\]
which are exactly the conditions for $\psi_g$-b.a.u.\ convergence in $N$.
\end{proof}

\begin{corollary}[Comparison with bounded convergence]
\label{predual:comparison}
Let $f_n,f\in L^1(M)_{\sa}$ and suppose that, for some $0\le C<\infty$,
\[
 -Cg\le f_n\le Cg,\qquad
 \omega_{f_n}\baupre{\psi_g}\omega_f.
\]
Then $-Cg\le f\le Cg$. The coefficients
$a_n=D_g(f_n)$ and $a=D_g(f)$ are selfadjoint, have norm at most $C$,
and satisfy
\[
 a_n\bau{\psi_g}a,\qquad a_n  \longrightarrow a \; {\rm weak*}.
\]
Moreover,
\[
 \|f_n-f\|_1\longrightarrow0,\qquad
 \Tr(f_n)\longrightarrow\Tr(f).
\]
\end{corollary}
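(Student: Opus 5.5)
The plan is to reduce everything to Theorem~\ref{predual:dct} and to the tools of this subsection. In Theorem~\ref{predual:dct} we take $\omega=\omega_g$ and $\psi=\psi_g$ (extended from $N$ by compression). We use the identification $L^1(M)\cong M_*$, $h\mapsto\omega_h$, which is an isometric order isomorphism.

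\emph{Setting up the theorem.} By the support observation, $-Cg\le f_n\le Cg$ gives $f_n=pf_np$, so $\omega_{f_n}$ is supported by $p$. The convergence hypothesis $\omega_{f_n}\baupre{\psi_g}\omega_f$ presupposes that $\omega_f$ is supported by $p$ as well. The inequalities $-C\omega_g\le\omega_{f_n}\le C\omega_g$ follow from the order identification. Theorem~\ref{predual:dct} then gives three conclusions:
\[
-C\omega_g\le\omega_f\le C\omega_g,\qquad \|\omega_{f_n}-\omega_f\|\to0,\qquad \omega_{f_n}(1)\to\omega_f(1).
\]
Translating back through the isometric order isomorphism yields $-Cg\le f\le Cg$, $\|f_n-f\|_1\to0$, and $\Tr(f_n)=\omega_{f_n}(1)\to\Tr(f)$.

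\emph{The coefficients.} Since $f_n,f\in[-Cg,Cg]$, Proposition~\ref{predual:normalization} gives unique selfadjoint $a_n,a\in N$ with $\|a_n\|,\|a\|\le C$ and $J_g(a_n)=f_n$, $J_g(a)=f$. Thus $f_n,f\in I_g$, and $a_n=D_g(f_n)$, $a=D_g(f)$. Lemma~\ref{predual:transfer} then gives $a_n\bau{\psi_g}a$ in $N$ directly.

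\emph{Weak* convergence.} There are two routes.
\begin{enumerate}
\item Apply Theorem~\ref{bau-weak*} in the algebra $N$ with the faithful normal state $\psi_g|_N$. The sequence $(a_n)$ is norm bounded by $C$, so $a_n\to a$ weak* in $N$. This is also weak* convergence in $M$, because a normal functional $\rho$ on $M$ gives $\rho(a_n)=\rho(pa_np)$, and $p\rho p$ restricts to a normal functional on $N$.
\item Use \eqref{predual:weak-equivalence}: norm convergence $f_n\to f$ in $L^1(M)$ implies weak convergence, which by the homeomorphism on bounded sets gives $a_n\to a$ weak*.
\end{enumerate}
Either route suffices, and they cross-check each other.

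The only delicate point is bookkeeping. We must check that $\omega_f$ is supported by $p$, so that Definition~\ref{predual:definition} applies; this is implicit in the hypothesis. We must also check that the domination hypothesis of Theorem~\ref{predual:dct} is exactly the density-level inequality. Neither step requires any new estimate.
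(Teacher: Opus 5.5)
Your proposal is correct and follows the paper's proof essentially step for step. You use Theorem~\ref{predual:dct} with majorant $\omega_g$ for the order bounds and the norm and trace conclusions, then Proposition~\ref{predual:normalization} and Lemma~\ref{predual:transfer} for the coefficients and their $\psi_g$-b.a.u.\ convergence. Your route (1) for weak* convergence is the paper's route: the paper cites Corollary~\ref{bau-ldct}, which rests on Theorem~\ref{bau-weak*}. Your route (2) is also valid: it deduces weak* convergence directly from $\|f_n-f\|_1\to0$ via \eqref{predual:weak-equivalence}, and so does not need the b.a.u.\ transfer for that particular conclusion.
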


\begin{proof} Since $\omega_f \leq \omega_h$ if and only if $f \leq h$, and $\| h \|_1 = \omega_h(1) = \| \omega_h \|$ if $h \geq 0$, we see that 
Theorem~\ref{predual:dct}, with majorant $\omega_g$, already gives
the limit's order bounds and the norm and integral conclusions.
For the connection with bounded convergence, use
Proposition~\ref{predual:normalization} to obtain
\[
 f_n=J_g(a_n),\qquad f=J_g(a),\qquad
 a_n,a\in N_{\sa},\qquad \|a_n\|,\|a\|\le C.
\]
Lemma~\ref{predual:transfer} converts the assumed predual
$\mathrm{b.a.u.}_*$ convergence into $a_n\bau{\psi_g}a$ in $N$.
Since $\psi_g|_N$ is a faithful normal state, hence a faithful normal
semifinite weight, Corollary~\ref{bau-ldct} applies to the norm-bounded sequence
$(a_n)$ in $N$ with $\varphi=\psi_g|_N$, and gives $a_n \to a$ weak*.
\end{proof}

\subsection*{Recovery of the classical bounded state formulation and LDCT}

\begin{corollary}[The classical conclusion of Lemma {\rm \ref{classical}}] \label{clcl}
Theorem {\rm \ref{bau-weak*}} independently implies the bounded sequentially
normal state formulation in Lemma {\rm \ref{classical}(3)}, and consequently the
full classical dominated convergence theorem in Lemma {\rm \ref{classical}(1)}. 
In fact, only the finite-trace a.u.\ case of Theorem {\rm \ref{bau-weak*}}
is needed for this classical application.
\end{corollary}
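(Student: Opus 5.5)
The plan is to derive Lemma~\ref{classical}(3) directly from the finite-trace a.u.\ case of Theorem~\ref{bau-weak*}, by changing to a finite measure, and then to invoke the chain $(3)\Rightarrow(2)\Rightarrow(1)$ already proved in Lemma~\ref{classical}. Let $(h_n)$ be bounded in $L^\infty(\mu)$ with $h_n\to h$ a.e., and let $\psi$ be a sequentially normal state on $L^\infty(\mu)$. By Lemma~\ref{measure}, $\psi$ is integration against a countably additive probability measure $\nu$ on $\Sigma$ with $\nu\ll\mu$. I would pass to $N=L^\infty(X,\nu)$. The canonical map $\pi:L^\infty(\mu)\to L^\infty(\nu)$ is well defined because $\nu\ll\mu$. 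The state $\tau(k)=\int k\,d\nu$ is a faithful normal finite trace on $N$, by monotone convergence. The point of this change of measure is that $(X,\nu)$ is finite, so $N$ really is a von Neumann algebra and $\tau$ is a faithful normal trace, even though $\mu$ itself need not be localizable.

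Next I would check that $k_n=\pi(h_n)$ is norm bounded and converges $\tau$-a.u.\ to $k=\pi(h)$ in the sense of Definition~\ref{cutoff}. Since $\mu$-null sets are $\nu$-null, we have $k_n\to k$ $\nu$-a.e., with $|k_n|,|k|\le C$ off a null set. Egoroff's theorem for the finite measure $\nu$ then gives, for each $\varepsilon>0$, a set $E$ with $\nu(X\setminus E)<\varepsilon$ on which $k_n\to k$ uniformly. The projection $e=\mathbf 1_E$ therefore satisfies $\tau(1-e)<\varepsilon$ and $\|(k_n-k)e\|_\infty\to0$, which is exactly $\tau$-a.u.\ convergence.

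Theorem~\ref{bau-weak*}, applied with $\varphi=\tau$, now gives $k_n\to k$ weak* in $N$. Testing against the normal functional $\tau$, i.e.\ against the constant density $1\in L^1(\nu)$, yields
\[
\psi(h_n)=\int h_n\,d\nu=\tau(k_n)\longrightarrow\tau(k)=\psi(h).
\]
This is Lemma~\ref{classical}(3). The implications $(3)\Rightarrow(2)\Rightarrow(1)$ of that lemma then give the full classical dominated convergence theorem on an arbitrary measure space. Only the finite-trace a.u.\ case of Theorem~\ref{bau-weak*} was used.

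The main point of care is circularity. Egoroff's theorem and Lemma~\ref{measure} must not rely on dominated convergence. Egoroff uses only countable additivity of $\nu$. Lemma~\ref{measure} uses monotone convergence to identify $\psi$ with $\int\cdot\,d\nu$ on simple functions and then on bounded functions, via uniform approximation. The proof of Theorem~\ref{bau-weak*} uses only Cauchy--Schwarz and $\tau(1-e_k)\to0$, and not Theorem~\ref{main}. So the derivation is indeed independent of classical dominated convergence.
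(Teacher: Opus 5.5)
Your proposal is correct and follows the paper's proof essentially step for step. Both arguments use Lemma~\ref{measure} to obtain $\nu\ll\mu$, Egoroff for the finite measure $\nu$ to get a.u.\ convergence, and Theorem~\ref{bau-weak*} for the faithful normal tracial state $\int\cdot\,d\nu$ on $L^\infty(\nu)$; they then test against that same state and finish with $(3)\Rightarrow(2)\Rightarrow(1)$ of Lemma~\ref{classical}. Your circularity remark is accurate for the only functional you actually need, namely $\tau$ itself, since $\tau(1-e_k)<2^{-k}$ holds directly; full weak* convergence in the paper's proof of Theorem~\ref{bau-weak*} additionally invokes Lemma~\ref{small-weight-projections}.
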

\begin{proof}
Let $(X,\Sigma,\mu)$ be a measure space, let $(h_n)$ be uniformly bounded
in $L^\infty(\mu)$ with $h_n\to h$ $\mu$-almost everywhere, and let
$\psi$ be a sequentially normal state on $L^\infty(\mu)$.
By Lemma~\ref{measure},
\[
 \nu(E)=\psi(\mathbf1_E),\qquad
 \psi(a)=\int_X a\,d\nu
\]
defines a countably additive probability measure $\nu\ll\mu$.
Egoroff's theorem gives almost uniform convergence for $\nu$.
Since multiplication is commutative, the integration state on
$L^\infty(\nu)$ is a faithful normal tracial state.
Theorem~\ref{bau-weak*}, applied to that algebra and that state, gives
weak* convergence. Using the same normal integration state,
we obtain
\[
 \psi(h_n)=\int h_n\,d\nu\longrightarrow\int h\,d\nu=\psi(h).
\]
This proves Lemma~\ref{classical}(3) without using the sequential weak* continuity theorem \ref{main}
 for general von Neumann algebras.

The implication $(3)\Rightarrow(1)$ in Lemma~\ref{classical}
then gives the full classical LDCT.
\end{proof}

\begin{remark}
On a finite measure space the  a.u.\ and b.a.u.\ notions we consider coincide
with almost-everywhere convergence. For a general localizable measure
they are  almost uniform convergence, which can be stronger than
almost-everywhere convergence. 
\end{remark}

\subsection{Local almost uniform convergence}
\label{ldct-addon:subsection}

We continue to fix a faithful normal semifinite weight $\varphi$ on $M$.
The following local a.u.\ conditions extend the preceding discussion. For a trace, these are the a.e.\ and b.a.e.\ conditions
of \cite[Definition~3.3]{BLS}. Their local spectral
definitions appear in \cite[Definition~4.11 and
Remark~4.12]{BLS}. Corollary~4.14 of that paper identifies
the one-sided local spectral condition with the a.e.\
condition of Definition~3.3.

\begin{definition}
\label{ldct-addon:definition}
For a sequence $(x_n)$ in $M$ and $x\in M$, we say that $x_n\to x$
\emph{locally $\varphi$-a.u.}, respectively \emph{locally
$\varphi$-b.a.u.}, if for every projection $p$ with
$\varphi(p)<\infty$ and every $\varepsilon>0$ there is a projection
$e\le p$ such that $\varphi(p-e)<\varepsilon$ and 
\[
 \|(x_n-x)e\|\longrightarrow0
 \qquad 
 ({\rm resp.}\ \|e(x_n-x)e\|\longrightarrow0).
\] 
The one-sided condition
implies the bilateral condition.
\end{definition}

The bilateral definition says exactly that $px_np\to pxp$ b.a.u.\ in
each finite-weight corner $pMp$.

For bounded sequences, Proposition~\ref{gm-order-local-au} identifies
the one-sided condition with $|x_n-x|\xrightarrow{o}0$; for
selfadjoint sequences it is equivalent to $x_n\xrightarrow{o}x$.
Consequently the one-sided implication to $\sigma$-strong and weak*
convergence below also follows from
\cite[\S2.2 and Proposition 2.7]{CHW}. The order implication does
not require $\sigma$-finiteness. We give a direct proof of the bilateral assertion 
 below.

\begin{theorem}[Local almost uniform convergence and integration]
\label{ldct-addon:theorem} 
Suppose $(x_n)$ is norm bounded, $x\in M$, and $y_n=x_n-x$.
\begin{enumerate}
\renewcommand{\theenumi}{\roman{enumi}}
\renewcommand{\labelenumi}{\textup{(\theenumi)}}
\item Local $\varphi$-b.a.u.\ convergence implies $x_n\to x$ weak* and
\begin{equation}
 \|h^{1/2}y_nh^{1/2}\|_1\longrightarrow0
 \quad\text{for every }h\in L^1(M)_+.
 \label{ldct-addon:l1}
\end{equation}
Here $L^1(M)$ is the Haagerup space. The same assertion holds for
$h\in L^1(M,\tau)_+$ when $\tau$ is a faithful normal semifinite trace.
The norm conclusion also holds under global $\varphi$-b.a.u.\ convergence.
\item Local $\varphi$-a.u.\ convergence implies $\sigma$-strong
convergence. In particular,
\begin{equation}
 y_n^*y_n\to0\ \text{weak*},\qquad |y_n|\to0\ \text{weak*}.
 \label{ldct-addon:deviations}
\end{equation}
If the adjoint sequence $(x_n^*)$ also converges locally $\varphi$-a.u., then
$x_n\to x$ $\sigma$-strong*. This holds in particular for selfadjoint
sequences and limits satisfying the local a.u.\ condition.
\end{enumerate}
\end{theorem}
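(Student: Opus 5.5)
For (i) the plan is first to prove weak* convergence by localizing the argument of Theorem~\ref{bau-weak*} to finite-weight corners, and then to upgrade it to the $L^1$ statement by an estimate of the same kind. Put $C=\sup_n\|y_n\|$. By Lemma~\ref{ldct-merged:finite-projections} there is a net of projections $p_\alpha$ with $\varphi(p_\alpha)<\infty$ and $p_\alpha\to1$ strongly.

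Fix such a $p$. Local b.a.u.\ convergence gives, for each $k$, a projection $e_k\le p$ with $\varphi(p-e_k)<2^{-k}$ and $\|e_ky_ne_k\|\to0$. The restriction of $\varphi$ to $pMp$ is a faithful normal positive functional. Lemma~\ref{small-weight-projections}, applied in $pMp$, therefore gives $p-e_k\to0$ strongly.

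For a normal state $\rho$, decompose
\[
 py_np=e_ky_ne_k+(p-e_k)y_np+e_ky_n(p-e_k).
\]
Cauchy--Schwarz then gives
\[
 |\rho(py_np)|\le\|e_ky_ne_k\|+2C\rho(p-e_k)^{1/2}.
\]
Let $n\to\infty$ and then $k\to\infty$; this shows $\rho(py_np)\to0$ for every fixed finite-weight $p$. To remove $p$, write
\[
 \rho(y_n)-\rho(p y_n p)=\rho((1-p)y_n)+\rho(p y_n(1-p)).
\]
Both terms are bounded by $2C\rho(1-p)^{1/2}$, uniformly in $n$. Choosing $p=p_\alpha$ with $\rho(1-p_\alpha)$ small gives $\limsup_n|\rho(y_n)|\le 2C\rho(1-p_\alpha)^{1/2}$, which can be made arbitrarily small. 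Since normal states span $M_*$, this proves $x_n\to x$ weak*.

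For \eqref{ldct-addon:l1}, the main obstacle is that weak* convergence alone does not give $L^1$-norm convergence; the decomposition above has to be run inside $L^1(M)$. With $\xi=h^{1/2}\in L^2(M)$, write
\[
 \xi y_n\xi=\xi e y_n e\xi+\xi(1-e)y_n\xi+\xi e y_n(1-e)\xi .
\]
H\"older gives
\[
 \|\xi y_n\xi\|_1\le\Tr(h)\,\|ey_ne\|+2C\,\Tr(h)^{1/2}\,\omega_h(1-e)^{1/2}.
\]
This is the estimate the proof will refer to as \eqref{ldct-addon:estimate}. Take $e=e_k\le p$, with $p$ among the $p_\alpha$.
\begin{itemize}
\item Since $e_k\to p$ strongly and $\omega_h$ is normal, $\omega_h(1-e_k)\to\omega_h(1-p)$ as $k\to\infty$.
\item Since $p_\alpha\to1$ strongly, $\omega_h(1-p_\alpha)$ can be made arbitrarily small.
\end{itemize}
Letting $n\to\infty$, then $k\to\infty$, then moving $p$ along the net gives the claim. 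The global b.a.u.\ case is the same estimate with $p=1$, using Lemma~\ref{small-weight-projections} directly. The tracial version follows verbatim with $\tau$ in place of $\Tr$.

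For (ii), one-sided local a.u.\ gives $\|y_ne_k\|\to0$. For a normal state $\rho$, write $y_n^*y_n\le 2e_ky_n^*y_ne_k+2(1-e_k)y_n^*y_n(1-e_k)$. This yields
\[
 \rho(y_n^*y_n)\le2\|y_ne_k\|^2+2C^2\rho(1-e_k),
\]
where $\rho(1-e_k)\le\rho(1-p)+\rho(p-e_k)$. The same three limits as in (i) ($n$, then $k$, then $p$) give $\rho(y_n^*y_n)\to0$, which is $\sigma$-strong convergence. Lemma~\ref{prop:projection-measure-convergence} and the Cauchy--Schwarz argument of Corollary~\ref{cor:absolute} then give \eqref{ldct-addon:deviations}. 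Applying the same argument to $x_n^*$ yields $\sigma$-strong* convergence, and for selfadjoint sequences the two hypotheses coincide.
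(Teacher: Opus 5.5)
Your proof is correct. Part (i) follows the paper's argument: the same decomposition $y_n=ey_ne+(1-e)y_n+ey_n(1-e)$, the same H\"older bound \eqref{ldct-addon:estimate}, and the same order of limits ($n$, then $k$, then $p\to1$). Your separate state-level proof of weak* convergence is valid but redundant, since weak* convergence already follows from $|\omega_h(y_n)|\le\|h^{1/2}y_nh^{1/2}\|_1$.

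Part (ii) is where you genuinely differ. The paper obtains $\sigma$-strong convergence from the appendix result Proposition~\ref{gm-order-local-au}. That result shows that bounded local $\varphi$-a.u.\ convergence forces $0\le|y_n|\le h_n\downarrow0$. To do so it decomposes $1$ into $\sigma$-finite projections reducing every $|y_n|$ and builds decreasing envelopes in each corner. The paper then reads off the deviation limits from Corollary~\ref{gm-modulus-state}.

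You instead use $(A+B)^*(A+B)\le2A^*A+2B^*B$ with $A=y_ne_k$ and $B=y_n(1-e_k)$, which gives
$\rho(y_n^*y_n)\le2\|y_ne_k\|^2+2C^2\bigl(\rho(1-p)+\rho(p-e_k)\bigr)$
directly. This is in effect Proposition~\ref{lm-almost-uniform} followed by Proposition~\ref{lm-bounded-comparison}(i), compressed into one estimate. It is short and self-contained, needing only Lemmas~\ref{ldct-merged:finite-projections} and~\ref{small-weight-projections}.

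The paper's heavier route buys the stronger conclusion that $|y_n|$ is order convergent. That yields $\psi(|y_n|)\to0$ and $\psi(y_n^*y_n)\to0$ for every sequentially normal state $\psi$ by sequential normality alone. It also shows the local a.u.\ condition is independent of $\varphi$, and ties the result to the order-topology theory of \cite{CHW}. From your $\sigma$-strong conclusion, those sequentially normal statements would instead require Lemma~\ref{one-state}, and hence the sequential-continuity Theorem~\ref{main}.
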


\begin{proof}
Put $C=\sup_n\|y_n\|$. Fix a finite-weight projection $p$.
Choose the `local projections' 
$e_k\le p$ with $\varphi(p-e_k)<2^{-k}$. Lemma~\ref{small-weight-projections} gives
$e_k\to p$ strongly. Put $H=\operatorname{Tr}(h)$ and
$\omega_h(a)=\operatorname{Tr}(h^{1/2}ah^{1/2})$.  This is a positive normal functional with 
$\omega_h(1)=H$. 
Decomposing $y_n=ey_ne+(1-e)y_n+ey_n(1-e)$ and using H\"older gives
\begin{equation}
 \|h^{1/2}y_nh^{1/2}\|_1
 \le H\|ey_ne\|+2CH^{1/2}\omega_h(1-e)^{1/2}.
 \label{ldct-addon:estimate}
\end{equation}
Here $\|h^{1/2}(1-e)\|_2^2
=\|(1-e)h^{1/2}\|_2^2=\omega_h(1-e)$, and $\| h^{1/2} \|^2_2 = H$.
Take $e=e_k$, then let $n\to\infty$, $k\to\infty$, and finally
$p\to1$ strongly. This proves \eqref{ldct-addon:l1}; the tracial
argument is identical. Since
$|\omega_h(y_n)|\le\|h^{1/2}y_nh^{1/2}\|_1$ and normal positive
functionals span $M_*$, weak* convergence follows.
For global $\varphi$-b.a.u. convergence, use the same estimate with $e_k\to1$ strongly, as in Lemma~\ref{small-weight-projections}.

For (ii), Proposition~\ref{gm-order-local-au} gives order
convergence of $|y_n|$ and hence $\sigma$-strong convergence of $y_n$.
The error limits follow from Corollary~\ref{gm-modulus-state}.
Apply the same argument to $y_n^*$ for the strong* assertion.
\end{proof}

The next proposition combines the noncommutative Egoroff
subsequence theorem, in the form used in the proof of
\cite[Theorem~4.3(ii)]{CHW}, with a transfer of the cutoff estimates
to finite-weight corners. This relates convergence relative
to a faithful normal state to the local conditions for
$\varphi$.

In particular, the $\sigma$-finite strong-to-order subsequence
result here is already in \cite[Theorem 4.3(ii), with Proposition 2.1]{CHW}.
Apply it to $|x_n-x|$ and then Proposition~\ref{gm-order-local-au}
to obtain (iii). The proof below uses Egoroff for the aforementioned cutoff transfer.

\begin{proposition} 
\label{conv-cutoff-comparison}
Let $\varphi$ be a faithful normal semifinite weight on $M$, and
suppose that $(x_n)$ is norm bounded in $M$ and $x\in M$.
\begin{enumerate}[label=\textup{(\roman*)}]
\item Global $\varphi$-a.u.\ convergence implies local
$\varphi$-a.u.\ convergence. The corresponding implication also
holds for b.a.u.\ convergence.
\item If $M$ is $\sigma$-finite and $\rho$ is a faithful normal
state on $M$, then
\[
 \begin{aligned}
 x_n\to x\quad\rho\text{-a.u.}
 &\quad\Longleftrightarrow\quad
 x_n\to x\quad\text{locally }\varphi\text{-a.u.},\\
 x_n\to x\quad\rho\text{-b.a.u.}
 &\quad\Longleftrightarrow\quad
 x_n\to x\quad\text{locally }\varphi\text{-b.a.u.}
 \end{aligned}
\]
In particular, for a faithful normal state the global and local
versions of our a.u.\ and b.a.u.\ convergence coincide on norm-bounded sequences.
\item If $M$ is $\sigma$-finite and $x_n\to x$ $\sigma$-strongly,
then some subsequence converges to $x$ locally $\varphi$-a.u.
The subsequence may also be taken to converge $\rho$-a.u.\ 
for any fixed faithful normal state $\rho$ (this is already 
essentially in the older literature such as  {\rm \cite{HJP}}).
\end{enumerate}
If $x_n\to x$ $\sigma$-strong*, a common subsequence can be chosen
such that both it and its adjoint sequence converge locally
$\varphi$-a.u.\ to their respective limits.
\end{proposition}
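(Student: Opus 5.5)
For (i), I would fix a projection $p$ with $\varphi(p)<\infty$ and $\varepsilon>0$, and take a global cutoff $e$ with $\varphi(1-e)<\varepsilon'$ and $\|(x_n-x)e\|\to0$. The task is to move this to a projection below $p$. The natural candidate is $f=p\wedge e$. Kaplansky's formula gives $p-p\wedge e\sim p\vee e-e\le 1-e$. This equivalence is not controlled by $\varphi$ when $\varphi$ is not a trace, so I would not rely on it. Instead I would use the one-sided Hilbert space picture. For a vector $\xi$ in the range of $f$, $\xi=e\xi$, so $\|(x_n-x)f\|\le\|(x_n-x)e\|\to0$. For the bilateral case, $f=fe=ef$ gives $\|f(x_n-x)f\|\le\|e(x_n-x)e\|$. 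So the only real issue is the weight estimate $\varphi(p-p\wedge e)<\varepsilon$.

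To get that estimate I would use a spectral cutoff in the spirit of Lemma~\ref{predual:transfer}. Put $k=pep$ and set $f=\one_{[1-\eta,1]}(k)\le p$. On the range of $f$ one has $\|(1-e)\xi\|^2\le\eta\|\xi\|^2$, so $\|(x_n-x)f\|\le\|(x_n-x)e\|+2C\eta^{1/2}$. This yields approximate uniform convergence only, so to get exact convergence I would let $\eta$ run through a sequence and use a diagonal intersection. Alternatively, and more simply, I would first apply the Egoroff-type construction: choose global cutoffs $e_j$ with $\varphi(1-e_j)<2^{-j}\varepsilon$, and take $e=\bigwedge_j e_j$. For the weight estimate, $p-f\le \eta^{-1}p(1-e)p$, hence $\varphi(p-f)\le\eta^{-1}\varphi(p(1-e)p)$. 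I expect the main obstacle to be that $\varphi(p(1-e)p)$ is not bounded by $\varphi(1-e)$ for non-tracial $\varphi$. My first attempt would use the GNS bound $\varphi(p(1-e)p)=\|\Lambda_\varphi((1-e)p)\|^2$. If this fails, I would restrict to $\varphi$-small cutoffs through the normal positive functional $\varphi(p\,\cdot\,p)$, which is finite because $\varphi(p)<\infty$, and apply (ii).

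For (ii), I would fix the faithful normal state $\rho$. In the direction local $\varphi$ $\Rightarrow$ $\rho$, I would use Lemma~\ref{ldct-merged:finite-projections} to choose projections $p_j$ of finite weight with $p_j\to1$ strongly. Normality of $\rho$ then lets me pick $p_j$ with $\rho(1-p_j)$ small. Inside the corner $p_jMp_j$, the state $\rho$ restricted there is a normal positive functional. The functional $\varphi(p_j\,\cdot\,p_j)$ is finite and faithful on that corner, so the two are mutually absolutely continuous. Weak* compactness, as in Lemma~\ref{small-weight-projections}, then shows that projections $q\le p_j$ with $\varphi(p_j-q)\to0$ satisfy $\rho(p_j-q)\to0$. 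Combining countably many cutoffs by infima, with $\sum2^{-j}$ bookkeeping, gives a global $\rho$-cutoff. The reverse direction is the same argument with the roles of $\rho$ and $\varphi|_{pMp}$ exchanged, using faithfulness of $\rho$. The final sentence of (ii) is the case $\varphi=\rho$.

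For (iii), by Lemma~\ref{prop:projection-measure-convergence}, $\rho(\one_{(\delta,\infty)}(|x_n-x|))\to0$. I would pass to a subsequence with $\rho(\one_{(2^{-j},\infty)}(|x_{n_j}-x|))<2^{-j}$. I would then set $e_m=\bigwedge_{j\ge m}\one_{[0,2^{-j}]}(|x_{n_j}-x|)$, which gives $\|(x_{n_j}-x)e_m\|\le2^{-j}$ and $\rho(1-e_m)\le2^{1-m}$. This is $\rho$-a.u. convergence, and (ii) then gives local $\varphi$-a.u. convergence. For the $\sigma$-strong* case, I would perform both extractions along a common subsequence by intersecting the cutoffs for $x_n$ and $x_n^*$.
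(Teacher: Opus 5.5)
There is a genuine gap, and it is the same in every part. Whenever you combine countably many cutoffs, you use
\[
 \rho\Bigl(1-\bigwedge_j e_j\Bigr)\le\sum_j\rho(1-e_j),
\]
that is, countable subadditivity of $\rho$ (or $\varphi$) on joins of projections. This enters in four places:
\begin{enumerate}
\item the bound on $\varphi(1-e)$ for $e=\bigwedge_je_j$ in (i);
\item the ``infima with $\sum 2^{-j}$ bookkeeping'' in (ii);
\item the estimate $\rho(1-e_m)\le 2^{1-m}$ in (iii);
\item the intersection of the cutoffs for $x_n$ and $x_n^*$.
\end{enumerate}
Kaplansky's formula makes this valid for traces, but it fails for non-tracial states. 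This is the obstacle you already noticed with $p\wedge e$. On $M_2$ with $\rho(a)=(1-\delta)a_{11}+\delta a_{22}$, let $q_\pm$ be the projections onto $\epsilon\xi_1\pm\sqrt{1-\epsilon^2}\,\xi_2$. Then $\rho(q_\pm)\le\epsilon^2+\delta$, yet $q_+\vee q_-=1$.

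Now concatenate such pairs in $B(\ell^2)$. Take $\rho=\sum_i2^{-i}\langle\,\cdot\,\xi_i,\xi_i\rangle$, vectors $\epsilon_j\xi_1\pm\sqrt{1-\epsilon_j^2}\,\xi_{N_j}$ with $\epsilon_j^2=2^{-2j}$ and $N_j$ large, and list the corresponding projections as $x_1,x_2,\dots$ pair by pair. Then $\rho(x_n)<2^{-n}$, so $x_n\to0$ strongly by Lemma~\ref{small-weight-projections}. The full sequence satisfies your extraction condition in (iii). But $1-e_m=\bigvee_{j\ge m}x_j$ dominates the projection onto $\xi_1$, so $\rho(1-e_m)\ge\frac12$ for every $m$. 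Your construction therefore does not produce a $\rho$-a.u.\ subsequence.

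Separately, global $\rho$-a.u.\ $\Rightarrow$ local $\varphi$-a.u.\ in (ii) is not ``the same argument with roles exchanged.''
\begin{itemize}
\item From local to global, the cutoffs already lie under $p$. So $1-e=(1-p)+(p-e)$ is an orthogonal sum and no subadditivity is needed. That half of your argument is fine and is essentially the paper's, with no infima required.
\item From global to local, you must move a global cutoff under $p$. This is exactly the unresolved problem in (i).
\end{itemize}
Your spectral cutoff $f=\one_{[1-\eta,1]}(pep)$ only gives $\limsup_n\|y_nf\|\le C\eta^{1/2}$. Upgrading to exact convergence again requires merging countably many such $f$.

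The missing ingredient is a non-tracial way to merge cutoffs. The paper takes it from the noncommutative Egoroff theorem \cite[Theorem III.4.13]{Tak1}. Let $e_j$ be good cutoffs with $e_j\to1$ strongly: global $\varphi$-cutoffs in (i), global $\rho$-cutoffs in (ii), with strong convergence from Lemma~\ref{small-weight-projections}. Apply Egoroff to $1-e_j$, with the projection $p$ and the normal functional $a\mapsto\varphi(pap)$. This gives a single $q\le p$ with $\varphi(p-q)<\varepsilon$ and $\|(1-e_{j_k})q\|\to0$. Then
\[
 \|y_nq\|\le\|y_ne_{j_k}\|+C\|(1-e_{j_k})q\|,
\]
and bilaterally the same holds with $2C$ and $\|e_{j_k}y_ne_{j_k}\|$. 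Take $\limsup_n$, then let $k\to\infty$.

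Within the paper you could instead use Lemma~\ref{gm-resolvent-cutoff} in $pMp$ with $a_k=p(1-e_{j_k})p$, choosing $j_k$ so that $\sum_kk^2\varphi(a_k)\le1$. This choice is possible because $\varphi(p(1-e_j)p)\to0$ by normality of $\varphi(p\,\cdot\,p)$. It works because $\|a_k^{1/2}f\|=\|(1-e_{j_k})f\|$ for $f\le p$.

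For (iii), the same lemma with weight $\rho$ and $a_k=y_{n_k}^*y_{n_k}$, where $\sum_kk^2\rho(a_k)\le1$, gives a $\rho$-a.u.\ subsequence directly, since $\|a_k^{1/2}f\|=\|y_{n_k}f\|$. The paper instead takes an order-null subsequence of $|y_n|$ from \cite[Theorem 4.3(ii)]{CHW} and applies Proposition~\ref{gm-order-local-au}. For the $\sigma$-strong* statement no common cutoff is needed, only a common subsequence. Extract for $(x_n)$ first, then for the adjoints along that subsequence.
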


For selfadjoint sequences, independence of b.a.u.\ convergence
in the algebra from the choice of faithful normal state was
already noted by Petz \cite[p.~250, following Theorem~2]{Petz}.
Part~\textup{(ii)} also compares this convergence with local
$\varphi$-b.a.u.\ convergence for a faithful normal semifinite
weight $\varphi$.

\begin{proof}
Put $y_n=x_n-x$ and $C=\sup_n\|y_n\|$.
We use the following form of noncommutative Egoroff's theorem:
if $a_j\to0$ $\sigma$-strongly, $p$ is a projection,
$\eta\in M_*^+$, and $\varepsilon>0$, then there exist a
projection $q\le p$ and a subsequence $(a_{j_k})$ such that
\[
 \eta(p-q)<\varepsilon,
 \qquad \|a_{j_k}q\|\longrightarrow0.
\]
See \cite[Theorem III.4.13, p.~85]{Tak1}, in the form stated in the
proof of \cite[Theorem 4.3]{CHW}.

First suppose that projections $e_j\to1$ strongly, and satisfy
$\|y_ne_j\|\to0$ for each fixed $j$.
Fix a projection $p$ with $\varphi(p)<\infty$ and a number
$\varepsilon>0$. The formula
$\eta(a)=\varphi(pap)$ for $a\in M_+$ defines a bounded normal
positive functional on $M$.
Apply Egoroff's theorem to $a_j=1-e_j$, the projection $p$,
and this functional. Since $(1-e_j)$ is norm bounded, its strong
convergence is also $\sigma$-strong convergence.
We obtain $q\le p$ with
\[
 \varphi(p-q)<\varepsilon,
 \qquad \|(1-e_{j_k})q\|\longrightarrow0.
\]
For each fixed $k$,
\[
 \|y_nq\|
 \le \|y_ne_{j_k}\|+C\|(1-e_{j_k})q\|.
\]
Taking the upper limit as $n\to\infty$, and then letting
$k\to\infty$, gives $\|y_nq\|\to0$.

A similar argument applies to bilateral cutoffs. Indeed, if instead
$\|e_jy_ne_j\|\to0$ for each $j$, then using $$q y_n q = q(1-e)y_n q +  qe y_n (1-e) q + qey_n eq,$$ we see that 
\[
 \|qy_nq\|
 \le \|e_{j_k}y_ne_{j_k}\|
       +2C\|(1-e_{j_k})q\|,
\]
and so $\|qy_nq\|\to0$ by the same successive limits.
We have therefore proved the local conclusions whenever such
strongly convergent good cutoffs $e_j$ exist.

For \textup{(i)}, choose global cutoffs $e_j$ with
$\varphi(1-e_j)<2^{-j}$. Lemma~\ref{small-weight-projections}
gives $e_j\to1$ strongly, so the preceding argument applies.

For the forward implications in \textup{(ii)}, choose global
$\rho$-cutoffs $e_j$ with $\rho(1-e_j)<2^{-j}$.
The same lemma, applied to the faithful normal state $\rho$,
gives $e_j\to1$ strongly, so the local $\varphi$-conclusions
again follow.
Conversely, suppose that the relevant local $\varphi$-condition
holds, and fix $\varepsilon>0$.
By Lemma~\ref{ldct-merged:finite-projections} and normality of
$\rho$, there is a projection $p$ such that
\[
 \varphi(p)<\infty,\qquad \rho(1-p)<\varepsilon/2.
\]
Choose local cutoffs $e_j\le p$ with
$\varphi(p-e_j)<2^{-j}$.
Lemma~\ref{small-weight-projections} gives $e_j\to p$ strongly,
and hence $\rho(p-e_j)\to0$.
For sufficiently large $j$,
\[
 \rho(1-e_j)=\rho(1-p)+\rho(p-e_j)<\varepsilon.
\]
This $e_j$ is the required global $\rho$-cutoff.
Taking $\varphi=\rho$ also proves the final assertion in
\textup{(ii)}.

For \textup{(iii)}, apply
\cite[Theorem 4.3(ii), with Proposition 2.1]{CHW} to $|y_n|$.
This gives an order-null subsequence. Proposition~\ref{gm-order-local-au}
gives local $\varphi$-a.u.\ convergence, and (ii) gives
$\rho$-a.u.\ convergence.

Finally, under $\sigma$-strong* convergence, apply
\textup{(iii)} first to $(x_n)$ and then to the adjoints along
the selected subsequence. Passing to a further subsequence
preserves the first local a.u.\ conclusion.
\end{proof}

\subsection{Local convergence in measure}

There are corresponding local convergence-in-measure conditions in which
the cutoff projection may vary with the index of the sequence. We continue
to fix a faithful normal semifinite weight $\varphi$ on $M$.

\begin{definition}[Local convergence in measure in the cutoff sense]
\label{lm-definition}
Let $(x_n)\subseteq M$, let $x\in M$, and put $y_n=x_n-x$.
We say that $x_n\to x$ \emph{locally in $\varphi$-measure in the
one-sided cutoff sense} if, for every projection $p\in M$ with
$\varphi(p)<\infty$, there are projections $e_n\le p$ such that \begin{equation*}
 \varphi(p-e_n)\longrightarrow0,
 \qquad \|y_ne_n\|\longrightarrow0.
 \tag{LM}\label{lm-one-sided}
\end{equation*}
The \emph{bilateral} version requires instead \begin{equation*}
 \varphi(p-e_n)\longrightarrow0,
 \qquad \|e_ny_ne_n\|\longrightarrow0.
 \tag{BLM}\label{lm-bilateral}
\end{equation*}
The \emph{symmetric} version of \eqref{lm-one-sided} means that both
$x_n\to x$ and $x_n^*\to x^*$ satisfy \eqref{lm-one-sided}.
The projections may depend on $p$ as well as on $n$.
\end{definition}

For a faithful normal semifinite trace $\tau$, conditions
\eqref{lm-one-sided} and \eqref{lm-bilateral} are precisely
sequential convergence, restricted to $M$, in the
$\tau$-locally measure topology and the weak
$\tau$-locally measure topology, respectively; see
\cite[Section~3]{CMlocal}. The definitions above use the
same cutoff conditions but for a general faithful normal
semifinite weight.

For $M=L^\infty(X,\mu)$ with $\varphi(f)=\int f\,d\mu$, all three
conditions coincide with local convergence in measure: for each measurable
$E$ of finite measure and each $\varepsilon>0$,
\[
 \mu\bigl(E\cap\{|x_n-x|>\varepsilon\}\bigr)\longrightarrow0.
\]
Indeed, the cutoff projections are characteristic functions of measurable
subsets of $E$.

\begin{proposition}[Bounded local measure convergence]
\label{lm-bounded-comparison}
Suppose that $(x_n)$ is norm bounded, $x\in M$, and $y_n=x_n-x$.
\begin{enumerate}[label=\textup{(\roman*)}]
\item Condition \eqref{lm-one-sided} is equivalent to
$\sigma$-strong convergence of $x_n$ to $x$.
\item The symmetric version of \eqref{lm-one-sided} is equivalent to
$\sigma$-strong* convergence. That is, condition \eqref{lm-one-sided} holds for both $x_n\to x$ and $x_n^*\to x^*$ if and only if $x_n\to x$ $\sigma$-strong*.  
It is also equivalent to requiring, for
each finite-weight projection $p$, projections $e_n\le p$ with \[
 \varphi(p-e_n)\longrightarrow0,
 \qquad
 \max\{\|y_ne_n\|,\|y_n^*e_n\|\}\longrightarrow0.
\]
\item Condition \eqref{lm-bilateral} implies
\begin{equation}
 \|h^{1/2}y_nh^{1/2}\|_1\longrightarrow 0
 \quad\text{for every }h\in L^1(M)_+,
 \label{lm-sandwiched}
\end{equation}
where $L^1(M)$ is the Haagerup space. In particular, it implies weak*
convergence. The same norm conclusion holds for every
$h\in L^1(M,\tau)_+$ if $\tau$ is a faithful normal semifinite trace.
\end{enumerate}
Thus, on norm-bounded sequences, the one-sided and symmetric conditions
are independent of the choice of $\varphi$. No $\sigma$-finiteness
assumption on $M$ is needed. Strong and strong* convergence in any faithful
normal concrete representation may replace $\sigma$-strong and
$\sigma$-strong* convergence, respectively.
\end{proposition}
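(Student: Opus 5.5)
For (i), the plan is to prove both implications directly. Put $C=\sup_n\|y_n\|$.

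For \eqref{lm-one-sided}$\Rightarrow$$\sigma$-strong, fix a normal positive functional $\rho$; we must show $\rho(y_n^*y_n)\to0$. By Lemma~\ref{ldct-merged:finite-projections} there are finite-weight projections $p_\alpha\to1$ strongly. Given $\varepsilon>0$, fix $p$ with $\rho(1-p)<\varepsilon$. Let $e_n\le p$ be as in \eqref{lm-one-sided}. Write $y_n=y_ne_n+y_n(p-e_n)+y_n(1-p)$ and use $(a+b+c)^*(a+b+c)\le 3(a^*a+b^*b+c^*c)$ to get
\[
 \rho(y_n^*y_n)\le 3\|y_ne_n\|^2\rho(1)+3C^2\rho(p-e_n)+3C^2\rho(1-p).
\]
The first term tends to zero. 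Since $\varphi(p-e_n)\to0$, Lemma~\ref{small-weight-projections} gives $p-e_n\to0$ strongly, hence weak*, so $\rho(p-e_n)\to0$. Thus $\limsup_n\rho(y_n^*y_n)\le3C^2\varepsilon$, which gives $\sigma$-strong convergence.

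For the converse, assume $y_n\to0$ $\sigma$-strongly and fix $p$ with $\varphi(p)<\infty$. The natural choice is the spectral cutoff $e_n=p\wedge \one_{[0,\delta_n]}(|y_n|)$, but the meet is awkward to control. Instead I would take $e_n=p-s\bigl(p\,\one_{(\delta,\infty)}(|y_n|)\bigr)$, i.e.\ $p$ minus the left support of $p f_n$ with $f_n=\one_{(\delta,\infty)}(|y_n|)$. Then $e_n\le p$, $e_n$ is orthogonal to the range of $pf_n$, so $f_ne_n=0$ and $\|y_ne_n\|=\|y_n(1-f_n)e_n\|\le\delta$. The left and right supports of $pf_n$ are equivalent, so
\[
 \varphi(p-e_n)=\varphi\bigl(s(pf_np)\bigr)
\]
is at most the weight of a projection equivalent to $s(f_npf_n)\le f_n$. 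The main obstacle is that $\varphi$ is not a trace, so equivalent projections need not have equal weight. I would get around this by working with the normal functional $\eta=\varphi(p\cdot p)$, and choosing the cutoff as $p$ minus $\one_{(t,\infty)}$-type spectral projections of $pf_np$ instead of supports. One has $\eta(f_n)\to0$ by Lemma~\ref{prop:projection-measure-convergence}(i). Chebyshev-type estimates then give a projection $q_n=\one_{(1/2,1]}(pf_np)\le p$ with $\varphi(q_n)\le 2\varphi(pf_np)=2\eta(f_n)\to0$ and $\|f_n(p-q_n)\|\le 1/\sqrt2$. After that, $\|y_n(p-q_n)\|\le\delta+C\|f_n(p-q_n)\|$, and sharper thresholds $\one_{(\theta,1]}$ with $\theta\to0$ make the second term small. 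A diagonal choice $\delta_n\downarrow0$, $\theta_n\downarrow0$ finishes the argument.

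Part (ii) follows by applying (i) to $x_n$ and to $x_n^*$. For the common-cutoff version, take $e_n\wedge e_n'$; the meet has small weight because $p-e\wedge e'$ is the support of $(p-e)+(p-e')$, and I would bound its weight by the same spectral trick applied to $(p-e_n)+(p-e_n')$.

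For (iii), use the estimate \eqref{ldct-addon:estimate} with $e=e_n$ varying:
\[
 \|h^{1/2}y_nh^{1/2}\|_1\le H\|e_ny_ne_n\|+2CH^{1/2}\omega_h(1-e_n)^{1/2}.
\]
Here $\omega_h(1-e_n)=\omega_h(1-p)+\omega_h(p-e_n)$. The second summand tends to zero by Lemma~\ref{small-weight-projections}, and the first can be made small by choosing $p$ first. Weak* convergence then follows as in Theorem~\ref{ldct-addon:theorem}. The concluding remarks follow because (i) and (ii) characterize the conditions intrinsically in terms of $\sigma$-strong and $\sigma$-strong* convergence, which for bounded sequences coincide with strong and strong* convergence in any faithful normal representation.
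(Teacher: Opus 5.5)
Your forward direction in (i), your part (iii), and the closing remarks match the paper; your functional estimate via $(a+b+c)^*(a+b+c)\le 3(a^*a+b^*b+c^*c)$ is the paper's vector estimate in dual form. Your converse in (i) is correct but takes a detour. Because you take spectral projections of $|y_n|$ before compressing, you run into the non-tracial difficulty, and you then need a second cutoff $\one_{(\theta,1]}(pf_np)$ and a diagonal choice of $\delta_n,\theta_n$.

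The paper compresses first. With $a_n=\varphi(py_n^*y_np)\to0$ and $\varepsilon_n=(a_n+n^{-4})^{1/4}$, it takes $e_n=\one_{[0,\varepsilon_n]}(|y_np|)$ in $pMp$. This gives $\|y_ne_n\|\le\varepsilon_n$, and Chebyshev gives $\varphi(p-e_n)\le\varepsilon_n^{-2}a_n\le a_n^{1/2}$ in one step.

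The genuine gap is the common-cutoff step in (ii). You claim that $e_n\wedge e_n'$ has small weight "because $p-e\wedge e'$ is the support of $(p-e)+(p-e')$." That is exactly the tracial reasoning you yourself flagged as unavailable in (i). The estimate $\varphi(P\vee Q)\le\varphi(P)+\varphi(Q)$ comes from Kaplansky's parallelogram law and invariance of a trace under equivalence, and it fails for weights.

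For a counterexample, take $B(\ell^2)$ with $\varphi=\sum_j 2^{-j}\langle\,\cdot\,\xi_j,\xi_j\rangle$ and $p=1$. Let $p-e_n$ be the projection onto $\xi_n$, and let $p-e_n'$ be the projection onto the unit vector along $\xi_n+t_n\xi_1$, with $0\neq t_n\to0$. Both weights tend to $0$. However, $(p-e_n)\vee(p-e_n')$ is the projection onto $\operatorname{span}\{\xi_1,\xi_n\}$, whose weight is at least $\tfrac12$. So the meet has to be discarded, not estimated.

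What does work is to replace it by a spectral cutoff. Put $S_n=(p-e_n)+(p-e_n')\in pMp$ and $\theta_n=\varphi(S_n)^{1/2}+n^{-1}$. Let $g_n=\one_{(\theta_n,\infty)}(S_n)\le p$ and $e_n''=p-g_n$. Then:
\begin{itemize}
\item $\varphi(g_n)\le\theta_n^{-1}\varphi(S_n)\le\varphi(S_n)^{1/2}\to0$;
\item $\|(p-e_n)e_n''\|^2=\|e_n''(p-e_n)e_n''\|\le\|e_n''S_ne_n''\|\le\theta_n$;
\item hence $\|y_ne_n''\|\le\|y_ne_n\|+C\theta_n^{1/2}\to0$, and likewise $\|y_n^*e_n''\|\le\|y_n^*e_n'\|+C\theta_n^{1/2}\to0$.
\end{itemize}

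The paper does this more directly. It applies the one-step Chebyshev cutoff to $A_n=p(y_n^*y_n+y_ny_n^*)p$, taking $e_n=\one_{[0,\varepsilon_n^2]}(A_n)$ with $\varepsilon_n=(\varphi(A_n)+n^{-4})^{1/4}$. This controls $\|y_ne_n\|$ and $\|y_n^*e_n\|$ simultaneously, with no intermediate cutoffs at all.
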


\begin{proof}
Put $C=\sup_n\|y_n\|$. Suppose first that \eqref{lm-one-sided} holds,
and fix a finite-weight projection $p$. By
Lemma~\ref{small-weight-projections}, $p-e_n\to0$ strongly. Hence,
in a faithful normal representation and for each vector $\xi$,
\[
 \|y_np\xi\|
 \le \|y_ne_n\|\,\|\xi\|+C\|(p-e_n)\xi\|
 \longrightarrow0.
\]
Since
$\|y_n\xi\|\le\|y_np\xi\|+C\|(1-p)\xi\|$,
letting $p$ tend strongly to $1$ along the net in
Lemma~\ref{ldct-merged:finite-projections} gives strong convergence.
On bounded sets this is equivalent to $\sigma$-strong convergence.

Conversely, suppose that $x_n\to x$ $\sigma$-strongly. For a fixed
finite-weight projection $p$, the functional
$a\mapsto\varphi(pap)$ is bounded and normal, so
\[
 a_n=\varphi(py_n^*y_np)\longrightarrow0.
\]
Set $\varepsilon_n=(a_n+n^{-4})^{1/4}$ and take the spectral projection
\[
 e_n=\one_{[0,\varepsilon_n]}(|y_np|)
\]
with respect to  the corner $pMp$. 
Then $e_n\le p$, $\|y_ne_n\| = \|y_n p e_n\| \le\varepsilon_n$.  Since \[
\varepsilon_n^2\mathbf1_{(\varepsilon_n,\infty)}(t)\le t^2
\qquad t\ge0, 
\]
we obtain 
$\varepsilon_n^2(p-e_n)\le |y_n p|^2 =py_n^*y_np.$
Thus 
\[
 \varphi(p-e_n)
 \le \varepsilon_n^{-2}\varphi(py_n^*y_np)
 =\frac{a_n}{(a_n+n^{-4})^{1/2}}
 \le a_n^{1/2}\longrightarrow0.
\]
This proves (i).

Applying (i) to $y_n$ and $y_n^*$ proves the first assertion of (ii).
For common cutoffs under $\sigma$-strong* convergence, repeat the argument above but using instead
\[
 A_n=p(y_n^*y_n+y_ny_n^*)p,\qquad
 a_n=\varphi(A_n),\qquad
 e_n=\one_{[0,\varepsilon_n^2]}(A_n)
\]
in $pMp$, with $\varepsilon_n=(a_n+n^{-4})^{1/4}$.
Here $a_n\to0$, with the same estimate as before controlling $\varphi(p-e_n)$,
and both $\|y_ne_n\|$ and $\|y_n^*e_n\|$ are at most
$\varepsilon_n$. The converse follows directly from (i).

For (iii), fix $h\in L^1(M)_+$ and put
$H=\Tr(h)$ and $\omega_h(a)=\Tr(h^{1/2}ah^{1/2})$.
For a finite-weight projection $p$, take the cutoffs in
\eqref{lm-bilateral}. Then $e_n\to p$ strongly by
Lemma~\ref{small-weight-projections}. Applying
\eqref{ldct-addon:estimate} with $e=e_n$ gives
\[
 \limsup_n\|h^{1/2}y_nh^{1/2}\|_1
 \le 2CH^{1/2}\omega_h(1-p)^{1/2}.
\]
Let $p\to1$ strongly along the net in
Lemma~\ref{ldct-merged:finite-projections}. This proves
\eqref{lm-sandwiched}; the tracial proof is identical.
Finally, 
$|\omega_h(y_n)|\le\|h^{1/2}y_nh^{1/2}\|_1 \to 0$.  Since $\omega_h$ is the generic 
 positive normal functional, and the latter  span $M_*$, we have weak*
convergence.
\end{proof}

\begin{proposition}[Almost uniform convergence and moving cutoffs]
\label{lm-almost-uniform}
Local $\varphi$-a.u.\ convergence implies \eqref{lm-one-sided}, and
local $\varphi$-b.a.u.\ convergence implies \eqref{lm-bilateral}.
These implications do not require norm boundedness of the sequence. 
\end{proposition}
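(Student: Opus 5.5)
The plan is a diagonal argument over the fixed cutoffs supplied by the local a.u.\ definition. Since the local a.u.\ condition uses one projection for all $n$, while \eqref{lm-one-sided} allows the projection to depend on $n$, we stitch a sequence of fixed cutoffs of shrinking defect into a single moving cutoff. No norm bound enters, because we never estimate $y_n$ on the complement of the cutoff; we only use $\|y_ne\|$ or $\|ey_ne\|$ for the chosen $e$.

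Fix a projection $p$ with $\varphi(p)<\infty$ and put $y_n=x_n-x$. For each $k\ge1$, local $\varphi$-a.u.\ convergence gives a projection $f_k\le p$ with
\[
\varphi(p-f_k)<2^{-k},\qquad \|y_nf_k\|\longrightarrow0 .
\]
Choose integers $N_1<N_2<\cdots$ such that $\|y_nf_k\|<2^{-k}$ for all $n\ge N_k$; this is possible because $k$ is fixed while $n$ tends to infinity. Now define the moving cutoffs:
\begin{itemize}
\item for $n<N_1$, set $e_n=p$ (these finitely many terms do not affect the limits);
\item for $N_k\le n<N_{k+1}$, set $e_n=f_k$.
\end{itemize}
Then $e_n\le p$. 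If $n\ge N_k$, then $n$ lies in some block $[N_j,N_{j+1})$ with $j\ge k$, so
\[
\varphi(p-e_n)=\varphi(p-f_j)<2^{-j}\le 2^{-k},\qquad \|y_ne_n\|=\|y_nf_j\|<2^{-j}\le 2^{-k}.
\]
Hence both $\varphi(p-e_n)\to0$ and $\|y_ne_n\|\to0$, which is \eqref{lm-one-sided}.

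The bilateral case is identical. Choose $f_k\le p$ with $\varphi(p-f_k)<2^{-k}$ and $\|f_ky_nf_k\|\to0$, choose $N_k$ so that $\|f_ky_nf_k\|<2^{-k}$ for $n\ge N_k$, and define $e_n$ in the same blockwise way. This gives $\|e_ny_ne_n\|\to0$ and $\varphi(p-e_n)\to0$, which is \eqref{lm-bilateral}.

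I expect no serious obstacle. The only step needing care is making the blocks work, which requires the $N_k$ to be strictly increasing and each $n$ beyond $N_1$ to fall into exactly one block $[N_k,N_{k+1})$. Finally, the argument never uses $\sup_n\|y_n\|$, which confirms the last sentence of the statement that norm boundedness is not required.
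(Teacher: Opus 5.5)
Your proof is correct and is essentially the paper's own argument. Both stitch the fixed cutoffs $e^{(j)}\le p$ with $\varphi(p-e^{(j)})<2^{-j}$ along strictly increasing thresholds $N_j$ into a moving cutoff, and handle the bilateral case identically. The only cosmetic difference is that the paper puts $e_n=0$ rather than $p$ for $n<N_1$, which is immaterial.
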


\begin{proof} 
To see the first implication, fix a projection $p$ with
$\varphi(p)<\infty$, and put $y_n=x_n-x$. For each $j\ge1$,
local almost uniform convergence supplies a projection
$e^{(j)}\le p$ such that
\[
 \varphi(p-e^{(j)})<2^{-j},
 \qquad
 \|y_ne^{(j)}\|\longrightarrow0 \; {\rm as} \;  n\to\infty.
\]
Choose positive integers $N_j$ successively, with
$N_{j+1}>N_j$, so that
$\|y_ne^{(j)}\|<2^{-j}$ whenever $n\ge N_j.$ 
Define
\[
 e_n=
 \begin{cases}
  0, & n<N_1,\\
  e^{(j)}, & N_j\le n<N_{j+1}.
 \end{cases}
\]
Then $e_n\le p$, and both $\varphi(p-e_n)$ and $\|y_ne_n\|$
tend to zero, proving \eqref{lm-one-sided}.

For the bilateral implication use the same ideas with
$\|e^{(j)}y_ne^{(j)}\|$ in place of $\|y_ne^{(j)}\|$.
\end{proof}

\begin{proposition}[Bilateral local measure convergence and subsequences]
\label{blm-local-subsequence}
Let $M$ be $\sigma$-finite, let $\varphi$ be a faithful normal
semifinite weight on $M$, and let $x_n,x\in M$ with
$\sup_n\|x_n\|<\infty$. The following are equivalent:
\begin{enumerate}[label=\textup{(\roman*)}]
\item $x_n\to x$ in $\mathrm{BLM}_{\varphi}$, that is,
condition \eqref{lm-bilateral} holds.
\item Every subsequence of $(x_n)$ has a further subsequence
converging locally $\varphi$-b.a.u.\ to $x$.
\end{enumerate}
For any fixed faithful normal state $\rho$ on $M$, the further
subsequence in \textup{(ii)} may also be required to converge
globally $\rho$-b.a.u.\ to $x$.
Consequently, on norm-bounded sequences in a $\sigma$-finite
von Neumann algebra, $\mathrm{BLM}_{\varphi}$ is independent of
the choice of the faithful normal semifinite weight $\varphi$.
\end{proposition}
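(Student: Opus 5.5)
The direction (ii)$\Rightarrow$(i) is routine. I argue by contradiction, using the subsequence form of Proposition~\ref{lm-almost-uniform}. Suppose \eqref{lm-bilateral} fails for some finite-weight projection $p$. Put
\[
 d_n=\inf\{\varphi(p-e)+\|e y_n e\| : e\le p\},
\]
where $y_n=x_n-x$. Condition \eqref{lm-bilateral} at $p$ is exactly $d_n\to0$, since one may choose $e_n$ nearly attaining the infimum. So there are $\delta>0$ and a subsequence with $d_{n_i}\ge\delta$. By (ii), a further subsequence converges locally $\varphi$-b.a.u. The proof of Proposition~\ref{lm-almost-uniform}, run along that subsequence, produces cutoffs $e_{n_{i_j}}\le p$ with $\varphi(p-e_{n_{i_j}})+\|e_{n_{i_j}}y_{n_{i_j}}e_{n_{i_j}}\|\to0$, contradicting $d_{n_{i_j}}\ge\delta$. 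Norm boundedness is not even needed here.

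For (i)$\Rightarrow$(ii), it suffices to show that every BLM-convergent bounded sequence has a subsequence converging globally $\rho$-b.a.u. to $x$, because subsequences of BLM-convergent sequences are again BLM-convergent. Proposition~\ref{conv-cutoff-comparison}(ii) then turns global $\rho$-b.a.u. into local $\varphi$-b.a.u. convergence, and this also gives the extra assertion about $\rho$. Fix $\varepsilon>0$ in the course of the construction as follows. By Lemma~\ref{ldct-merged:finite-projections} and normality of $\rho$, choose finite-weight projections $p_k$ with $\rho(1-p_k)<2^{-k}$. For each $k$, (BLM) gives moving cutoffs $e^{(k)}_n\le p_k$ with $\varphi(p_k-e^{(k)}_n)\to0$ and $\|e^{(k)}_ny_ne^{(k)}_n\|\to0$. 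Lemma~\ref{small-weight-projections}, applied in the corner $p_kMp_k$ with the faithful normal semifinite weight $\varphi|_{p_kMp_k}$, gives $e^{(k)}_n\to p_k$ strongly, hence $\rho(p_k-e^{(k)}_n)\to0$. A diagonal choice then yields indices $n_1<n_2<\cdots$ and projections $f_j:=e^{(j)}_{n_j}$ with
\[
 \rho(1-f_j)<2^{-j+1},\qquad \|f_jy_{n_j}f_j\|<2^{-j}.
\]

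The main obstacle is the next step: turning these moving cutoffs into a single fixed cutoff. In the one-sided case one would intersect, $q_m=\bigwedge_{j\ge m}f_j$, and use $\|y_{n_j}q_m\|\le\|y_{n_j}f_j\|$. For bilateral compressions, however, $\|q y q\|\le\|f y f\|$ holds for $q\le f$ only when the inequality is read properly. In fact $q y q=q(fyf)q$ whenever $q\le f$, so $\|q_my_{n_j}q_m\|\le\|f_jy_{n_j}f_j\|<2^{-j}$ for all $j\ge m$, and this does work. The real difficulty is controlling $\rho(1-q_m)$: the infimum of projections is not subadditive for $\rho$ in the noncommutative case. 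I would first try the standard noncommutative Borel--Cantelli device, namely $1-\bigwedge_{j\ge m} f_j=\bigvee_{j\ge m}(1-f_j)$. Passing to a further subsequence with $\rho(1-f_j)<4^{-j}$, I would bound $\rho\bigl(\bigvee_{j\ge m}(1-f_j)\bigr)$ by comparing the supremum with the support of $\sum_{j\ge m}(1-f_j)$ and estimating by spectral truncation. Specifically, if $a=\sum_{j\ge m}(1-f_j)$, then $\mathbf 1_{[\theta,\infty)}(a)\le a/\theta$, while the part of the support where $a<\theta$ must be handled separately. If this estimate does not close, the fallback is to replace the exact intersection by $q_m=\mathbf 1_{[0,1/2]}\bigl(\sum_{j\ge m}2^{j/2}(1-f_j)\bigr)$, in the spirit of the spectral cutoffs in Proposition~\ref{lm-bounded-comparison}. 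Then $\rho(1-q_m)\le 2\sum_{j\ge m}2^{j/2}\rho(1-f_j)$, and $q_m(1-f_j)q_m\le 2^{-j/2-1}$. The latter makes $\|(1-f_j)q_m\|$ small, so with $C=\sup_n\|y_n\|$,
\[
 \|q_my_{n_j}q_m\|\le\|f_jy_{n_j}f_j\|+2C\,\|(1-f_j)q_m\|\longrightarrow0
\]
as $j\to\infty$. This is global $\rho$-b.a.u. convergence of the subsequence.

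Finally, independence of $\varphi$ follows because condition (ii), in its $\rho$-form via Proposition~\ref{conv-cutoff-comparison}(ii), involves only $\rho$ and not $\varphi$.
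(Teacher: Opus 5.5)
Your overall structure matches the paper's proof. This covers the diagonal choice of moving cutoffs $f_j$ with $\rho(1-f_j)<2^{1-j}$ and $\|f_jy_{n_j}f_j\|<2^{-j}$, the reduction to producing one fixed cutoff along a fixed subsequence, and the appeal to Proposition~\ref{conv-cutoff-comparison}(ii). Your $d_n$ version of the contradiction argument for (ii)$\Rightarrow$(i) also matches. The gap is in the one nontrivial step, the common cutoff.

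First, abandon the intersection route rather than postponing it. No bound on $\rho\bigl(\bigvee_{j\ge m}(1-f_j)\bigr)$ in terms of the numbers $\rho(1-f_j)$ can exist, however fast they decay. On $B(\ell^2)$ with $\rho=\sum_i2^{-i}\omega_{e_i}$, let $P_i^{\pm}$ be the rank-one projections onto $\pm\epsilon_ie_1+e_{N_i}$, with $N_i\ge2$ distinct. Then $\rho(P_i^{\pm})\le\epsilon_i^2/2+2^{-N_i}$ can be made as small as you like. Yet $P_i^+\vee P_i^-\ge e_{11}$, so every tail join of $P_1^+,P_1^-,P_2^+,\dots$ has $\rho$-value at least $1/2$. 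This is exactly the ``part of the support where $a<\theta$'' that you could not control.

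Second, your fallback is the right device, but as written $q_m$ is undefined. The operator $A_m=\sum_{j\ge m}2^{j/2}(1-f_j)$ is not in $M$: its partial sums have norm at least $2^{L/2}$ whenever $f_L\ne1$, so the series converges neither in norm nor strongly. You also cannot take limits of spectral projections of the partial sums, since noncommutatively $A\le B$ does not give $\mathbf 1_{[0,c]}(B)\le\mathbf 1_{[0,c]}(A)$.

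The repair is the resolvent regularization of Lemma~\ref{gm-resolvent-cutoff}, which is what the paper uses at this point.
\begin{itemize}
\item Put $A_{m,L}=\sum_{j=m}^{L}2^{j/2}(1-f_j)$. Let $B_m$ be the decreasing strong limit of $(1+A_{m,L})^{-1}$, and set $q_m=\mathbf 1_{[2/3,1]}(B_m)$.
\item The inequalities $1-q_m\le3(1-B_m)$ and $1-(1+A)^{-1}\le A$, together with normality of $\rho$, give $\rho(1-q_m)\le3\sum_{j\ge m}2^{j/2}\rho(1-f_j)$. This tends to zero because $\rho(1-f_j)<2^{1-j}$.
\item For $j\ge m$, the bounds $q_m\le\tfrac32B_m\le\tfrac32\bigl(1+2^{j/2}(1-f_j)\bigr)^{-1}$ give $\|(1-f_j)q_m\|^2\le\tfrac{3}{2(1+2^{j/2})}$.
\end{itemize}
Your final estimate $\|q_my_{n_j}q_m\|\le\|f_jy_{n_j}f_j\|+2C\|(1-f_j)q_m\|$ then goes through unchanged. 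Equivalently, thin to indices with $\sum_kk^2\rho(1-f_{j_k})\le1$ and apply Lemma~\ref{gm-resolvent-cutoff} directly to $a_k=1-f_{j_k}$, as the paper does. Alternatively, you can make your unbounded $A_m$ rigorous as an extended positive operator via its closed quadratic form, but that requires the normal extension of $\rho$ to the extended positive part.
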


\begin{proof}
Put $y_n=x_n-x$ and $C=\sup_n\|y_n\|$, and fix a faithful
normal state $\rho$ on $M$. Suppose first that \textup{(i)} holds.
Start with any subsequence and relabel it as $(x_n)$.
By Lemma~\ref{ldct-merged:finite-projections} and normality of
$\rho$, for each $j\ge1$ choose a projection $p_j$ such that
\[
 \varphi(p_j)<\infty,\qquad \rho(1-p_j)<2^{-j}.
\]
Condition \eqref{lm-bilateral} supplies projections
$r_n^{(j)}\le p_j$ with
\[
 \varphi(p_j-r_n^{(j)})\longrightarrow0,
 \qquad \|r_n^{(j)}y_nr_n^{(j)}\|\longrightarrow0.
\]
By Lemma~\ref{small-weight-projections},
$p_j-r_n^{(j)}\to0$ strongly, so
$\rho(p_j-r_n^{(j)})\to0$ for each fixed $j$.
Choose strictly increasing integers $N_j$ such that, whenever
$n\ge N_j$, both this last quantity and
$\|r_n^{(j)}y_nr_n^{(j)}\|$ are less than $2^{-j}$.
Set $e_n=r_n^{(j)}$ for $N_j\le n<N_{j+1}$, and set $e_n=1$
for $n<N_1$. We have $1-e_n = (1-p_j) +(p_j - r_n^{(j)})$.  Then
\[
 \rho(1-e_n)\longrightarrow0,
 \qquad \|e_ny_ne_n\|\longrightarrow0.
\]

Choose increasing indices $n_k$ such that, with $q_k=1-e_{n_k}$,
\[
 \rho(q_k)\le 2^{-k}k^{-2}.
\]
Thus $\sum_k \, k^2\rho(q_k)\le1$. Apply
Lemma~\ref{gm-resolvent-cutoff} with weight $\rho$ and
positive operators $q_k$. For every $\varepsilon>0$ this gives a
projection $f$ such that
\[
 \rho(1-f)<\varepsilon,
 \qquad \|q_kf\|\longrightarrow0.
\]
Since
$y_{n_k}-e_{n_k}y_{n_k}e_{n_k}
=q_ky_{n_k}+e_{n_k}y_{n_k}q_k$, after multiplying on both sides by $f$ we obtain
\[
 \|fy_{n_k}f\|
 \le \|e_{n_k}y_{n_k}e_{n_k}\|+2C\|q_kf\|
 \longrightarrow0.
\]
The indices $n_k$ were chosen before $\varepsilon$; only $f$
depends on $\varepsilon$. Hence this further subsequence
converges globally $\rho$-b.a.u.\ to $x$. By
Proposition~\ref{conv-cutoff-comparison}(ii), it also converges
locally $\varphi$-b.a.u.\ to $x$.

Conversely, for a fixed projection $p$ of finite $\varphi$-weight,
condition \eqref{lm-bilateral} is equivalent to the following:
for every $\delta>0$, all sufficiently large $n$ admit a
projection $e\le p$ with
\[
 \varphi(p-e)<\delta,\qquad \|ey_ne\|<\delta.
\]
Indeed, one direction is immediate. For the converse, choose
strictly increasing integers $N_j$ such that, for every
$n\geq N_j$, a projection satisfying both inequalities with
$\delta=2^{-j}$ exists. For $N_j\leq n<N_{j+1}$, choose such
a projection $e_n$. Then both $\varphi(p-e_n)$ and
$\|e_ny_ne_n\|$ tend to zero. 
If \textup{(i)} fails, there are therefore a fixed $p$, a
$\delta>0$, and a subsequence for which no such projection
exists at any index. By \textup{(ii)}, some further subsequence
converges locally $\varphi$-b.a.u.\ to $x$. A fixed projection
$e\le p$ witnessing that convergence with
$\varphi(p-e)<\delta$ has $\|ey_ne\|<\delta$ eventually along
the further subsequence, a contradiction.

Finally, Proposition~\ref{conv-cutoff-comparison}(ii) identifies
local $\varphi$-b.a.u.\ convergence with global
$\rho$-b.a.u.\ convergence on norm-bounded sequences, for every
faithful normal semifinite weight $\varphi$. Thus condition
\textup{(ii)}, and hence \textup{(i)}, is independent of
$\varphi$.
\end{proof}

In the tracial case, the equivalence between the bilateral
cutoff condition and $L^1$ convergence in every finite-trace
corner also follows from \cite[Propositions~6 and~9(1)]{CMlocal}.
The next proposition includes its equivalent formulation
using all positive integrable densities. 

\begin{proposition}[The bilateral measure condition in the tracial case]
\label{lm-tracial-bilateral}
Let $\tau$ be a faithful normal semifinite trace on $M$, let $(x_n)$
be norm bounded, and put $y_n=x_n-x$. The following are equivalent:
\begin{enumerate}[label=\textup{(\roman*)}]
\item Condition \eqref{lm-bilateral} holds with $\varphi=\tau$.
\item $\|h^{1/2}y_nh^{1/2}\|_1\to0$ for every
$h\in L^1(M,\tau)_+$.
\item $\|py_np\|_1\to0$ for every projection $p$ with
$\tau(p)<\infty$.
\end{enumerate}
\end{proposition}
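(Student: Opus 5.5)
I would prove the cycle (i)$\Rightarrow$(ii)$\Rightarrow$(iii)$\Rightarrow$(i). The first implication is already Proposition~\ref{lm-bounded-comparison}(iii), in its tracial form: the estimate \eqref{ldct-addon:estimate} holds verbatim with $\tau$ in place of $\Tr$. For (ii)$\Rightarrow$(iii), I take $h=p$ with $\tau(p)<\infty$. Then $p\in L^1(M,\tau)_+$ and $h^{1/2}=p$, so $\|py_np\|_1\to0$. The whole content therefore sits in (iii)$\Rightarrow$(i).

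For (iii)$\Rightarrow$(i), fix $p$ with $\tau(p)<\infty$ and work in the finite corner $pMp$ with the finite trace $\tau|_{pMp}$. Put $z_n=py_np\in pMp$, so that $\|z_n\|_1\to0$. I will build cutoffs $e_n\le p$ using a Chebyshev-type spectral argument. Since $z_n$ need not be selfadjoint, I first treat real and imaginary parts: $\|\operatorname{Re}z_n\|_1,\|\operatorname{Im}z_n\|_1\le\|z_n\|_1$. For a selfadjoint $w\in pMp$ and $\delta>0$, let $q=\one_{(\delta,\infty)}(|w|)$, the spectral projection in $pMp$. Then $\delta\,\tau(q)\le\tau(|w|)=\|w\|_1$. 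Moreover, $f=p-q$ commutes with $w$ and satisfies $\|fwf\|\le\delta$.

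I then intersect the two cutoffs coming from $\operatorname{Re}z_n$ and $\operatorname{Im}z_n$. The set $e=f_1\wedge f_2$ satisfies $\tau(p-e)\le\tau(p-f_1)+\tau(p-f_2)$ by Kaplansky's parallelogram law for traces. It also gives $\|ez_ne\|\le\|ef_1\operatorname{Re}z_nf_1e\|+\|ef_2\operatorname{Im}z_nf_2e\|\le2\delta$. Alternatively, I could apply a single spectral projection of $|z_n|$ together with polar decomposition, using $e=\one_{[0,\delta]}(|z_n|)\wedge\one_{[0,\delta]}(|z_n^*|)$. I prefer the real/imaginary route because it keeps the estimate $\|ez_ne\|$ elementary.

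Choosing $\delta_n=\|z_n\|_1^{1/2}+n^{-1}$ gives $\tau(p-e_n)\le2\|z_n\|_1/\delta_n\to0$ and $\|e_ny_ne_n\|=\|e_nz_ne_n\|\le2\delta_n\to0$, since $e_n\le p$. This is exactly \eqref{lm-bilateral} for $\varphi=\tau$.

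The main point to watch is this final cutoff step. The bilateral compression by $e$ controls both parts only because $e\le f_i$, and the trace of $p-e$ must stay small, which needs the trace property through Kaplansky's formula $\tau(p-f_1\wedge f_2)\le\tau(p-f_1)+\tau(p-f_2)$. That formula is where traciality is genuinely used. Norm boundedness of $(x_n)$ is not needed for (iii)$\Rightarrow$(i) or (ii)$\Rightarrow$(iii); it enters only through Proposition~\ref{lm-bounded-comparison}(iii) in (i)$\Rightarrow$(ii).
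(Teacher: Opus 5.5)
Your proof is correct. The cycle (i)$\Rightarrow$(ii)$\Rightarrow$(iii)$\Rightarrow$(i) and the first two implications coincide with the paper's, and the difference lies only in (iii)$\Rightarrow$(i).

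\textbf{How the paper does (iii)$\Rightarrow$(i).} The paper uses a single spectral projection of the modulus. With $b_n=py_np$ and $\delta_n=(\|b_n\|_1+n^{-2})^{1/2}$, it takes $e_n=\one_{[0,\delta_n]}(|b_n|)$ in $pMp$. It notes that $\|b_ne_n\|^2=\|e_n|b_n|^2e_n\|\le\delta_n^2$. It then uses $\|e_nb_ne_n\|\le\|b_ne_n\|$ together with the Chebyshev bound $\tau(p-e_n)\le\delta_n^{-1}\|b_n\|_1\le\|b_n\|_1^{1/2}$.

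\textbf{What this saves.} A right cutoff already controls the bilateral compression. So there is no need to split $z_n$ into real and imaginary parts, to intersect two cutoffs, or to invoke Kaplansky's parallelogram law. For the same reason, the factor $\one_{[0,\delta]}(|z_n^*|)$ in your alternative is superfluous.

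\textbf{Your route.} It is valid. The estimate $\|\operatorname{Re}z_n\|_1\le\|z_n\|_1$, the commutation of $f=p-q$ with $w$, and the subadditivity $\tau\bigl((p-f_1)\vee(p-f_2)\bigr)\le\tau(p-f_1)+\tau(p-f_2)$ are all fine. The cost is a factor $2$ in the norm estimate and an extra lemma.

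\textbf{Where traciality enters.} Your closing claim that Kaplansky's formula is ``where traciality is genuinely used'' is an artifact of your route. In the paper's argument the trace enters only through $\tau(\one_{(\delta,\infty)}(|b|))\le\delta^{-1}\tau(|b|)=\delta^{-1}\|b\|_1$, and your proof uses this bound as well.
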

\begin{proof}
Proposition~\ref{lm-bounded-comparison}(iii) proves (i)$\Rightarrow$(ii),
and taking $h=p$ proves (ii)$\Rightarrow$(iii).
For (iii)$\Rightarrow$(i), fix such a $p$, put $b_n=py_np$, and set
$\delta_n=(\|b_n\|_1+n^{-2})^{1/2}$. In $pMp$ take
$e_n=\one_{[0,\delta_n]}(|b_n|)$. Then $e_n |b_n|^2 e_n \leq \delta_n^2 \, e_n$, so that
$\|b_ne_n\|\le\delta_n$.  
\[
 \tau(p-e_n)\le\delta_n^{-1}\|b_n\|_1\longrightarrow0,
 \qquad
 \|e_ny_ne_n\|=\|e_nb_ne_n\|
 \le\|b_ne_n\|\le\delta_n\longrightarrow0,
\]
by definition of $\delta_n$ and since $\|py_np\|_1\to0$. 
\end{proof}

\begin{remark}[Relation to existing measure topologies]
\label{lm-terminology}
For the usual trace on $B(H)$, the one-sided cutoff condition
\eqref{lm-one-sided} is equivalent to strong operator
convergence, while the bilateral condition
\eqref{lm-bilateral} is equivalent to weak operator
convergence. Thus the one-sided and bilateral conditions
can differ even on norm-bounded sequences. 

If $\tau(1)<\infty$, then, on norm-bounded sequences, the one-sided,
bilateral, and symmetric conditions all coincide with
$\|x_n-x\|_1\to0$ and with $\sigma$-strong* convergence.
Indeed, taking $p=1$ in Proposition~\ref{lm-tracial-bilateral}
gives the equivalence of the bilateral condition and the $L^1$ criterion, and
$\tau(y_n^*y_n)=\tau(y_ny_n^*)\le C\tau(|y_n|) \to 0$,
where $C=\sup_n\|y_n\|$. So we have ($\sigma$-)strong* convergence, which is equivalent to the one-sided condition. 
Indeed, bounded strong convergence
implies the one-sided condition and hence implies the bilateral condition.
In the  faithful normal state case, 
suppose that $M$ is $\sigma$-finite and let $\rho$ be a faithful normal
state. For a norm-bounded sequence $(x_n)$ and $x\in M$, the one-sided
condition \eqref{lm-one-sided}, for any faithful normal semifinite
weight $\varphi$, is equivalent to
\[
 \rho\bigl(\one_{(\varepsilon,\infty)}(|x_n-x|)\bigr)
 \longrightarrow0
 \qquad\text{for every }\varepsilon>0.
\]
Indeed, by Lemma~\ref{small-weight-projections} with weight $\rho$,
this condition forces the spectral projections to converge strongly,
and hence weak*, to zero. The equivalence follows from
Lemma~\ref{prop:projection-measure-convergence} and
Proposition~\ref{lm-bounded-comparison}. Requiring the same condition
for $x_n^*-x^*$ gives the symmetric version and hence
$\sigma$-strong* convergence. 
\end{remark}

We now give some applications to dominated convergence.

\paragraph{Local dominated convergence.}
Suppose that $(x_n)$ is norm bounded in $M$ and that
$x_n\to x\in M$ locally $\varphi$-a.u. Put $y_n=x_n-x$.
By Corollary~\ref{gm-modulus-state}, every sequentially
normal state $\psi$ on $M$ satisfies
\[
  \psi(|y_n|)\longrightarrow0,
  \qquad
  \psi(y_n^*y_n)\longrightarrow0.
\]
By scaling, the same conclusions hold for every bounded
sequentially normal positive functional. In particular,
if $g$ is a positive affiliated operator with
$\varphi(g)<\infty$, then
\[
  (g^{1/2}\varphi g^{1/2})(|y_n|)\longrightarrow0,
  \qquad
  (g^{1/2}\varphi g^{1/2})(y_n^*y_n)\longrightarrow0,
\]
since $g^{1/2}\varphi g^{1/2}$ is a bounded normal positive
functional by Lemma~\ref{ldct-merged:integral-construction}.

Under local $\varphi$-b.a.u.\ convergence we also obtain
the bounded-domination conclusion of
Proposition~\ref{bounded-weight}.
Explicitly, let $\Phi$ be a sequentially normal weight,
let $g\in M_+$ satisfy $\Phi(g)<\infty$, and suppose that
$f_n=f_n^*\in M$ satisfy
\[
  -g\leq f_n\leq g,
  \qquad
  f_n\longrightarrow f\in M
  \quad\hbox{locally $\varphi$-b.a.u.}
\]
Then
\[
  -g\leq f\leq g,\qquad
  f_n,f\in\mathfrak m_\Phi,\qquad
  \Phi(f_n)\longrightarrow\Phi(f).
\]
Indeed, $\|f_n\|\leq\|g\|$, so
Theorem~\ref{ldct-addon:theorem}(i) gives weak*
convergence and Proposition~\ref{bounded-weight} applies.

\begin{corollary}[Varying majorants]
\label{ldct-addon:varying}
Suppose $\Phi$ is sequentially weak* lower semicontinuous on $M_+$,
as is every normal weight. Let $a_n,a\in M_+$ and $f_n=f_n^*\in M$
satisfy
\[
 \sup_n\|a_n\|<\infty,\qquad -a_n\le f_n\le a_n,\qquad
 \Phi(a_n)<\infty,\qquad \Phi(a_n)\to\Phi(a)<\infty.
\]
If $a_n\to a$ and $f_n\to f\in M$ locally $\varphi$-b.a.u., then
$-a\le f\le a$, all $f_n,f\in\mathfrak m_\Phi$, and
$\Phi(f_n)\to\Phi(f)$.
Moreover, suppose that $a_n\to a$ locally $\varphi$-b.a.u.,
and let $(z_n)$ be a norm-bounded sequence in $M$ such that
$z_n\to z\in M$ locally $\varphi$-a.u. If $|z_n-z|\le a_n$
for every $n$, then 
$\Phi(|z_n-z|)\longrightarrow0.$
\end{corollary}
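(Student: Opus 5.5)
The plan is to reduce both assertions to Theorem~\ref{thm:varying}, using Theorem~\ref{ldct-addon:theorem} to turn local $\varphi$-b.a.u.\ convergence into weak* convergence. The only input needed there is norm boundedness of the sequences in question.

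\emph{First assertion.} Put $K=\sup_n\|a_n\|<\infty$. Since $-a_n\le f_n\le a_n$, we have $\|f_n\|\le\|a_n\|\le K$. The order bound gives this because for selfadjoint $f$ with $-a\le f\le a$ one has $-\|a\|1\le f\le\|a\|1$. Thus both $(a_n)$ and $(f_n)$ are norm bounded, and they converge locally $\varphi$-b.a.u.\ to $a$ and $f$ respectively. Theorem~\ref{ldct-addon:theorem}(i) therefore gives $a_n\to a$ weak* and $f_n\to f$ weak*. All hypotheses of Theorem~\ref{thm:varying} now hold with $g_n=a_n$ and $g=a$. That theorem yields $f=f^*$, $-a\le f\le a$, $f_n,f\in\mathfrak m_\Phi$, and $\Phi(f_n)\to\Phi(f)$. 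The parenthetical remark that normal weights qualify is Corollary~\ref{cor:weights}(i).

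\emph{Second assertion.} Put $y_n=z_n-z$. Since $(z_n)$ is norm bounded and $z_n\to z$ locally $\varphi$-a.u., Theorem~\ref{ldct-addon:theorem}(ii) gives $|y_n|\to0$ weak*; this is \eqref{ldct-addon:deviations}. As before, $a_n\to a$ weak* by Theorem~\ref{ldct-addon:theorem}(i), and we have $-a_n\le|y_n|\le a_n$. Applying Theorem~\ref{thm:varying} with $f_n=|y_n|$, $f=0$, $g_n=a_n$ and $g=a$ gives $\Phi(|y_n|)\to\Phi(0)=0$. Alternatively, Corollary~\ref{cor:absolute} applies once we know $z_n\to z$ $\sigma$-strongly, which Theorem~\ref{ldct-addon:theorem}(ii) also supplies.

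\emph{Main obstacle.} The only point needing care is that Theorem~\ref{ldct-addon:theorem} requires norm-bounded sequences. For $(f_n)$ this must be derived from the order domination by $a_n$ together with $\sup_n\|a_n\|<\infty$; this is the inequality $\|f\|\le\|a\|$ for $-a\le f\le a$, which follows by applying vector states. Everything else is a direct citation.
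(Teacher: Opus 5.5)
Your proof is correct and follows the paper's argument. You get norm boundedness of $(f_n)$ from $\|f_n\|\le\|a_n\|$, weak* convergence from Theorem~\ref{ldct-addon:theorem}(i), and then apply Theorem~\ref{thm:varying}; for the second assertion the paper goes through $\sigma$-strong convergence and Corollary~\ref{cor:absolute}, whose proof is exactly your direct application of Theorem~\ref{thm:varying} with $f_n=|z_n-z|$ and $f=0$.
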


\begin{proof} Since $\|f_n\|\le\|a_n\|$, both sequences are norm
bounded. Theorem~\ref{ldct-addon:theorem}(i) gives
$a_n\to a$ and $f_n\to f$ weak*. 
Apply Theorem~\ref{thm:varying}.
For the last assertion, Theorem~\ref{ldct-addon:theorem}(ii) gives
$z_n\to z$ $\sigma$-strongly, so Corollary~\ref{cor:absolute} applies.
\end{proof}

\begin{remark} 
\label{ldct-addon:distinction}
In the commutative localizable case the local a.u.\ and b.a.u.\
conditions coincide and are equivalent to a.e.\ convergence: apply
Egoroff on each finite-measure set, and use semifiniteness for the
converse. Hence bounded a.e.\ convergence satisfies both parts of
Theorem~\ref{ldct-addon:theorem}, without a global Egoroff assumption. 
 The normalization argument in the proof of Corollary \ref{clcl}  then yields the full classical dominated convergence theorem. 
\end{remark}

\subsection*{Comparison with spectral convergence}
\label{sec:spectral-comparison} 

We conclude this section by comparing the global cutoff
conditions with spectral formulations of convergence. 
For a trace, our
definitions presented in Definition \ref{cutoff} are exactly the ordinary a.u.\ and b.a.u.\ notions recalled
in \cite[Definition 3.1]{BLS}. These should be distinguished from the
\emph{spectral} notion called $\tau$-a.u.\ in \cite[Definition 4.2]{BLS}.
Proposition 4.10 there shows that spectral $\tau$-a.u.\ convergence implies
ordinary a.u.\ convergence, with a subsequential converse.  
Proposition 4.13 there identifies approximately $\tau$-a.u.\ convergence with
ordinary a.u.\ convergence. These comparisons do not require a uniform
operator-norm bound. The finite-weight projection observation underlying
Lemma~\ref{small-weight-projections} also occurs in the proof of
\cite[Proposition 5.3, p.~19]{BLS}, where it is obtained using Haagerup
$L^1$-duality. Their paper also studies convergence of unbounded operators
and in Haagerup $L^p$-spaces. Our emphasis here is on the
consequences for integration against sequentially normal
states and weights, and on dominated convergence in the
predual. 

For the bounded convergence conclusion needed here, the relevant spectral
formulation has the following direct nontracial variant. 

\begin{proposition}[A bounded nontracial spectral variant]
\label{bls-spectral-variant}
Let $\varphi$ be a faithful normal semifinite weight on $M$, and suppose
$x_n,x\in M$ with $\sup_n\|x_n\|<\infty$. Assume that for every
$\varepsilon,\delta>0$ there exists a projection $e\in M$ such that
\[
 \varphi(1-e)<\delta,\qquad
 \varphi\!\left(\bigvee_{m\ge N}
  1_{(\varepsilon,\infty)}\bigl(|(x_m-x)e|\bigr)\right)
       \longrightarrow0\quad(N\to\infty).
\]
Then $x_n\to x$ $\sigma$-strongly, hence weak*, and
$\psi(x_n)\to\psi(x)$ for every sequentially normal state $\psi$.
The hypothesis holds in particular under our $\varphi$-a.u.\
convergence, or under the spectral condition
\[
 \varphi\!\left(\bigvee_{m\ge N}
  1_{(\varepsilon,\infty)}(|x_m-x|)\right)\longrightarrow0
 \quad\text{for every }\varepsilon>0.
\]
\end{proposition}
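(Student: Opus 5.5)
Put $y_n=x_n-x$ and $C=\sup_n\|y_n\|$. The goal is $\sigma$-strong convergence $y_n\to 0$; by Lemma~\ref{prop:projection-measure-convergence} and Theorem~\ref{main}, weak* convergence and $\psi(x_n)\to\psi(x)$ for sequentially normal $\psi$ then follow. On bounded sets $\sigma$-strong convergence is strong convergence in a faithful normal representation. So I will show $\|y_n\xi\|\to0$ for every vector $\xi$, or equivalently $\rho(y_n^*y_n)\to0$ for every normal state $\rho$.

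\textbf{Step 1 (fixed cutoff).} Fix $\varepsilon,\delta>0$ and take $e$ as in the hypothesis. Put
\[
 P_N=\bigvee_{m\ge N}1_{(\varepsilon,\infty)}(|y_me|),\qquad Q_n=1_{(\varepsilon,\infty)}(|y_ne|).
\]
Then $Q_n\le P_n$, and $\varphi(P_n)\to0$. Lemma~\ref{small-weight-projections} gives $P_n\to0$ strongly, hence $Q_n\to0$ strongly (indeed $\|Q_n\xi\|\le\|P_n\xi\|$). Functional calculus, exactly as in the proof of Lemma~\ref{prop:projection-measure-convergence} applied to $y_ne$, gives
\[
 e y_n^*y_n e=|y_ne|^2\le\varepsilon^2 1+C^2Q_n.
\]
Hence $\limsup_n\rho(ey_n^*y_ne)\le\varepsilon^2$ for each normal state $\rho$. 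Since $\varepsilon$ is arbitrary, but $e$ depends on $\varepsilon$, I will handle this by a diagonal choice. Take $\varepsilon=\delta=2^{-k}$, giving $e_k$ with $\varphi(1-e_k)<2^{-k}$ and $\limsup_n\|y_ne_k\xi\|\le 2^{-k}\|\xi\|$.

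\textbf{Step 2 (removing the cutoff).} This is the main obstacle, because only the right compression $y_ne$ is controlled. I write
\[
 y_n\xi=y_ne_k\xi+y_n(1-e_k)\xi,
\]
so that $\|y_n\xi\|\le\|y_ne_k\xi\|+C\|(1-e_k)\xi\|$. Lemma~\ref{small-weight-projections} gives $e_k\to1$ strongly, so the last term tends to zero as $k\to\infty$. Taking $\limsup_n$ first and then $k\to\infty$ yields $\|y_n\xi\|\to0$. This is strong convergence, hence $\sigma$-strong by boundedness. The one-sided form is what makes this work; a bilateral hypothesis would only give weak* convergence.

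\textbf{Step 3 (the special cases).} Under $\varphi$-a.u.\ convergence, fix $e$ with $\varphi(1-e)<\delta$ and $\|y_ne\|\to0$. Then for $m$ large we have $\|y_me\|\le\varepsilon$, so $1_{(\varepsilon,\infty)}(|y_me|)=0$ and the supremum over $m\ge N$ vanishes for $N$ large. Under the global spectral condition, take $e=1$.
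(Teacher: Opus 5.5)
Your proof is correct and follows essentially the same route as the paper. Both arguments fix the cutoff $e$, use Lemma~\ref{small-weight-projections} to send the spectral projections to zero strongly, remove $1-e$ via $e\to1$ strongly as $\delta\to0$, and finally let $\varepsilon\downarrow0$. The only cosmetic difference is that you control $y_ne$ through the individual projections $Q_n\le P_n$ and the operator inequality $|y_ne|^2\le\varepsilon^2 1+C^2Q_n$, whereas the paper uses the tail join $q_N$ and the norm bound $\|y_ne(1-q_N)\|\le\varepsilon$ for $n\ge N$. Your version makes visible that only $\varphi(Q_n)\to0$ is actually needed.
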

\begin{proof}
Put $y_n=x_n-x$ and $C=\sup_n\|y_n\|$ and pass to a faithful normal
standard representation of $M$. Fix $\varepsilon,\delta>0$, and let $e=e_{\varepsilon,\delta}$
be the projection given by the hypothesis, so that
$\varphi(1-e)<\delta$. Now set
\[
 q_N=\bigvee_{m\ge N}1_{(\varepsilon,\infty)}(|y_me|).
\]
Then $q_N\le e$, since $|y_me|$ is supported on $e$, and
$q_N\to0$ strongly by Lemma~\ref{small-weight-projections}.
For $n\ge N$, spectral calculus gives
$\|y_ne(1-q_N)\|\le\varepsilon$. Hence
\[
 \|y_n\xi\|\le\varepsilon\|\xi\|
       +C\|q_N\xi\|+C\|(1-e)\xi\|.
\]
Taking the upper limit in $n$ and then letting $N\to\infty$ leaves
the first and third terms. For a fixed $\varepsilon$ and $(\delta_n)$ such that $\delta_n\to 0$, 
Lemma~\ref{small-weight-projections} then ensures that $\|(1-e_{\delta_n})\xi\|\to 0$. Finally let $\varepsilon\downarrow0$.
This proves strong convergence in the standard representation, equivalently
$\sigma$-strong convergence on this bounded sequence. Weak* convergence
follows, and Theorem~\ref{main} gives the assertion about sequentially
normal states.

Under our a.u.\ convergence, choose $e$ with $\|y_ne\|\to0$;
the displayed tail spectral projections are eventually zero. Under the
last spectral condition, take $e=1$.
\end{proof}

\begin{remark} 
The hypothesis of Proposition~\ref{bls-spectral-variant} is the direct
weight analogue of \cite[Definition 4.11]{BLS}, restricted to bounded
sequences. \end{remark}

\section{Conclusions} Our objective has been to formulate various convergence theorems, such as versions of Lebesgue's dominated convergence theorem, for quantum integration.
We work usually on completely general von Neumann algebras, however our 
 results recover, and thus generalize, the classical theorems.  We also developed parts of the accompanying
theory. 
We have treated
integrable majorants in the tracial and Haagerup $L^1$ settings
and integrals associated with normal semifinite weights, and
established implications and equivalences among the various
modes of convergence considered here.  The latter are similar to the classical case summarized at the end of the Introduction.
 In the noncommutative case we distinguish (as is also done e.g.\ in \cite{BLS}) between the  one-sided and the bilateral cases of a.u.\ and local measure convergence.  Our global a.u.\ variants imply the local ones.
The dominated convergence theorem that we establish in the predual of $M$ gives norm convergence under
fixed positive order domination and almost uniform convergence,
without a trace or $\sigma$-finiteness assumption on the ambient algebra.
 For a fixed Haagerup $L^1$ majorant, the
normalization and cutoff-transfer results also connect this formulation
with bounded convergence in a $\sigma$-finite supporting corner.

For clarity we include some of  the order comparisons in the diagram below.
Here sequences are norm bounded.   To make the chart fit into a small space
we have used some random symbols for various convergences,
 and we hope the reader will forgive this:
$A_\varphi$ and $B_\varphi$ denote
local $\varphi$-a.u.\ and local $\varphi$-b.a.u.\ convergence, and
$O_{|\cdot|}$ denotes $|x_n-x|\xrightarrow{o}0$. Write LM and BLM
for \eqref{lm-one-sided} and \eqref{lm-bilateral}, and $Q$ for
the sandwiched $L^1$ conclusion \eqref{ldct-addon:l1}. Then
\[
\begin{array}{ccccccc}
 \text{global a.u.}&\Longrightarrow&A_\varphi&\Longleftrightarrow&O_{|\cdot|}
 &\Longrightarrow&\sigma\text{-strong}\ \Longleftrightarrow\ \mathrm{LM}\\[4pt]
 \Downarrow&&\Downarrow&&&&\Downarrow\\[4pt]
 \text{global b.a.u.}&\Longrightarrow&B_\varphi&\Longrightarrow&\mathrm{BLM}
 &\Longrightarrow&Q\ \Longrightarrow\ \text{weak*}.
\end{array}
\]
These implications hold for arbitrary $M$. For selfadjoint sequences
$O_{|\cdot|}$ is equivalent to $x_n\xrightarrow{o}x$.
In the $\sigma$-finite case, every strongly convergent bounded sequence
has, from each subsequence, a further locally a.u.-convergent subsequence,
by the cited  theorem from \cite{CHW} and the local/order comparison.
Proposition~\ref{conv-cutoff-comparison}(ii) identifies local convergence
with global convergence for a faithful normal state.
The bilateral conclusion $Q$ does not imply
$\rho(|x_n-x|)\to0$, even for a faithful normal state
$\rho$. Indeed, let $y_n=e_{n1}+e_{1n}$ in $B(\ell^2)$
and let
$\rho(a)=\sum_{j\ge1}2^{-j}\langle a\xi_j,\xi_j\rangle$.
The projections $p_N=\sum_{j=1}^N e_{jj}$ satisfy
$\rho(1-p_N)=2^{-N}$ and $p_Ny_np_N=0$ for $n>N$.
Thus $y_n\to0$ globally $\rho$-b.a.u., and hence satisfies
$Q$, whereas
\[
  |y_n|=e_{11}+e_{nn},\qquad
  \rho(|y_n|)=\tfrac12+2^{-n}\qquad n\ge2.
\]

The convergence results in our paper  may be useful in quantum measure theory wherever one must justify limits of
expectations, or for approximating integrable observables when
no trace is available on the original algebra.

In a sequel we plan to develop the corresponding global  convergence in measure theory.
We briefly indicate its main results. Let $\varphi$ be a faithful
normal semifinite weight on an arbitrary von Neumann algebra $M$,
and let $x_n,x\in M$ with $\sup_n\|x_n\|<\infty$.
Put $y_n=x_n-x$. Consider the conditions
\[
\begin{aligned}
\mathrm{G}_{\varphi}:\quad&
 \varphi(1-e_n)\longrightarrow0,\qquad
 \|y_ne_n\|\longrightarrow0,\\
\mathrm{BG}_{\varphi}:\quad&
 \varphi(1-e_n)\longrightarrow0,\qquad
 \|e_ny_ne_n\|\longrightarrow0,
\end{aligned}
\]
where in each case $(e_n)$ is a suitable sequence of projections.
These are global convergence-in-measure conditions in the
one-sided and bilateral cutoff senses.

The condition $\mathrm{G}_{\varphi}$ is equivalent to every
subsequence having a further subsequence converging globally
$\varphi$-a.u.\ to $x$. The analogous characterization of
$\mathrm{BG}_{\varphi}$ uses global $\varphi$-b.a.u.\ convergence. 
Moreover, $\mathrm{G}_{\varphi}$ implies $\sigma$-strong
convergence, while $\mathrm{BG}_{\varphi}$ implies weak*
convergence and
\[
 \|h^{1/2}(x_n-x)h^{1/2}\|_1\longrightarrow0
 \qquad h\in L^1(M)_+,
\]
where $L^1(M)$ is the Haagerup space.

If $M$ is $\sigma$-finite and $\rho$ is any faithful normal
state, then, for the bounded sequences under consideration,
\[
 \mathrm{LM}_{\varphi}\ \Longleftrightarrow\ \mathrm{G}_{\rho},
 \qquad
 \mathrm{BLM}_{\varphi}\ \Longleftrightarrow\ \mathrm{BG}_{\rho}.
\] 
For a faithful normal semifinite trace $\tau$, the conditions
$\mathrm{G}_{\tau}$ and $\mathrm{BG}_{\tau}$ above both coincide with
ordinary global convergence in $\tau$-measure.
 The tracial a.u.\ subsequence extraction
result is due to Goldstein--Litvinov
\cite[Proposition~1]{GL}; see also the proof of
\cite[Corollary~3.8]{BLS}. The sequel will give the nontracial
proofs and further comparisons with the local convergence
notions considered here.

Natural next steps are to relate these sequential criteria more
systematically to existing theories of noncommutative uniform
integrability and convergence in measure, and to study convergence
when the integrating weights or states also vary. For unbounded
observables, it would be useful to clarify the relation between
convergence in affiliated-operator models and convergence in
Haagerup $L^p$ spaces. Such developments could turn these
structural continuity results into tools for approximation and
stability in noncommutative integration. As indicated in
\cite{BLd}, we also hope to investigate extensions of some of
these results to monotone $\sigma$-complete $C^*$-algebras,
and to explore applications to concrete problems in
noncommutative ergodic theory and other settings in which
sequential convergence plays a central role.

\bigskip

{\bf Acknowledgements:}  We thank Louis Labuschagne and Claud Steyn for several discussions related to various notions of sequential convergence (usually in a tracial situation), and we thank Louis for teaching us many of the basics of the subject over several years, proposing some variants of results here, 
for giving many useful comments and suggestions on various drafts of this paper.    We
were very influenced throughout by \cite{BLS}, and its perspectives and objectives particularly concerning 
nc versions of almost uniform convergence and tracial versions of convergence in measure.   We thank Brent Nelson for a literature reference on semifinite weights.  After the first arXiv version we had a very helpful conversation with Marten Wortel, during which I realized that there was substantially more overlap with the results of  \cite{CHW} than I had previously thought.   This led to the Appendix and an associated revision and shortening of parts of the paper (also, some additional results were added).
 D.P.B acknowledges support from NSF Grant DMS-2154903, and also from CoE-MaSS Ref No. 2025-048-OPA-Quantum Measures, and the School of Mathematical and Statistical Sciences at NWU Potchefstroom, which helped make some of the discussions mentioned above possible.  Although we had established versions of several results here over several years, most of the present formulations of results here were developed recently through sustained, highly interactive and iterative mathematical discussions between the human author and ChatGPT, initially using Sol 5.6 and subsequently 6.0 Astra. 
 The human author directed and decided the mathematics, suggested most of the theorems and directions and usually provided references and suggestions, and double checked and corrected results.   
 ChatGPT provided substantive assistance with the mathematical development, proposed alternative formulations and variants, constructed and adapted numerous  proofs and arguments, and helped identify and correct errors in arguments supplied by the author. It also assisted with literature searches, exposition, typesetting, and restructuring the organization of the paper  to its final form.  The human author assumes full responsibility for the correctness of all mathematical statements, and for the integrity and accuracy of all citations.

\appendix
\section{Order convergence and local almost uniform convergence}
\label{app:order-local}

We prove that, for norm-bounded sequences, local
$\varphi$-a.u.\ convergence of $x_n$ to $x$ is equivalent
to sequential order convergence of $|x_n-x|$ to zero.
For selfadjoint sequences and limits, these conditions
are also equivalent to sequential order convergence
of $x_n$ to $x$.

 For $x_n,x\in M_{\sa}$,
sequential order convergence $x_n\xrightarrow{o}x$ means that there
are $h_n\in M_+$ with
\[
 h_n\downarrow0,\qquad -h_n\le x_n-x\le h_n.
\]
This is equivalent to the usual increasing lower and decreasing upper
bounds in $M_{\sa}$; see \cite[\S2.1]{CHW}.
It implies $\sigma$-strong convergence without $\sigma$-finiteness
\cite[\S2.2, p.~103, and Proposition 2.7]{CHW}.
For completeness, the useful estimate is
\begin{equation}\label{gm-order-square}
 -h\le y\le h,\quad y=y^*,\ h\ge0
 \quad\Longrightarrow\quad y^2\le\|h\|h.
\end{equation}
Indeed $(h+\epsilon1)^{-1/2}y(h+\epsilon1)^{-1/2}$ is a selfadjoint
contraction; multiplication and then $\epsilon\downarrow0$ give the
estimate. Thus $y_n^2\le\|h_1\|h_n$ for an order-null sequence,
which proves strong convergence by applying normal positive functionals.
Operator monotonicity of square root also gives
$|y_n|\le\|h_1\|^{1/2}h_n^{1/2}\downarrow0$. Consequently selfadjoint
order convergence is equivalent to order convergence of the modulus
of the error.

\begin{lemma}[A common cutoff for weighted positive operators]
\label{gm-resolvent-cutoff}
Let $\psi$ be a normal weight on a von Neumann algebra $N$, and let
$a_k\in N_+$ satisfy
\[
 \sum_{k=1}^{\infty} k^2\psi(a_k)\le1.
\]
For each $\varepsilon>0$ there is a projection $f\in N$ such that
\[
 \psi(1-f)<\varepsilon,
 \qquad \|a_k^{1/2}f\|\longrightarrow0.
\]
\end{lemma}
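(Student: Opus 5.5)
The plan is to build a single positive element controlling all the $a_k$, take a spectral projection of it, and use the weights $k^2$ to get decay of $\|a_k^{1/2}f\|$.

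\emph{Step 1: the majorant.} Since $\psi$ is a normal weight, the partial sums $b_N=\sum_{k\le N}k^2a_k$ form an increasing sequence in $N_+$ with $\psi(b_N)\le1$. These need not be norm bounded, so I would not take a supremum in $N$. Instead I work with the bounded resolvent-type quantities: for each finite $N$ and $\lambda>0$ put
\[
 f_{N,\lambda}=\one_{[0,\lambda]}(b_N), \qquad \lambda(1-f_{N,\lambda})\le b_N .
\]
This gives $\psi(1-f_{N,\lambda})\le\lambda^{-1}$ and
\[
 \|b_N^{1/2}f_{N,\lambda}\|\le\lambda^{1/2}.
\]
Since $k^2a_k\le b_N$ for $k\le N$, it follows that $\|a_k^{1/2}f_{N,\lambda}\|\le\lambda^{1/2}/k$ for $k\le N$.

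\emph{Step 2: removing the dependence on $N$.} This is the main obstacle. I would pass to the unbounded positive self-adjoint operator (or extended positive part) $b=\sup_N b_N$ affiliated with $N$, defined in a faithful normal representation through the increasing closed quadratic forms. By normality, $\psi$ extends to it with $\psi(b)\le1$. I would then take $f=\one_{[0,\lambda]}(b)$, which is a projection in $N$.

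For each $N$, on the range of $f$ the form of $b_N$ is dominated by that of $b$, which is at most $\lambda$. Hence $\|b_N^{1/2}f\|\le\lambda^{1/2}$ for all $N$, and so $\|a_k^{1/2}f\|\le\lambda^{1/2}/k\to0$.

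For the weight estimate I would use $\lambda(1-f)\le\min(b,\lambda)$, where $\min(b,\lambda)$ is a bounded element of $N_+$. It remains to show $\psi(\min(b,\lambda))\le1$. I would get this as a limit of $\min(b_N,\lambda)$: the operators $\min(b_N,\lambda)$ increase strongly to $\min(b,\lambda)$, and $\psi(\min(b_N,\lambda))\le\psi(b_N)\le1$. Normality of $\psi$ then gives $\psi(1-f)\le\lambda^{-1}$.

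\emph{Step 3: conclusion.} Choose $\lambda>\varepsilon^{-1}$; then $f$ is the required projection.

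\emph{Fallback for Step 2.} If handling the unbounded supremum proves awkward, I would use bounded resolvents instead. Set
\[
 c_N=b_N(1+b_N)^{-1}.
\]
These operators increase with $N$ by operator monotonicity, so $c_N\uparrow c$ strongly with $0\le c\le1$. I would then take $f=\one_{[0,\theta]}(c)$ for $\theta<1$ close to $1$. The task then becomes translating the bounds back to $b_N$: on the range of $f$ we have $c_N\le\theta$, hence $b_N\le\theta/(1-\theta)$ in form sense. The bound $\psi(1-f)\le(1-\theta)^{-1}\sup_N\psi(c_N)\le(1-\theta)^{-1}$ alone is too weak. So I would refine it by using $1-f\le(1-\theta)^{-1}\ldots$ combined with $c\le b$-type monotone limits and normality. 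This is the delicate point I expect to need care.
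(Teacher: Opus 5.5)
Your projection is the right one, and your norm bound $\|a_k^{1/2}f\|\le\lambda^{1/2}/k$ is fine. With $t=1/\lambda$ and $B_t=\operatorname*{s-lim}_N(1+tb_N)^{-1}$ one has $\one_{[0,\lambda]}(b)=\one_{[1/2,1]}(B_t)$, which is exactly the paper's projection.

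\textbf{The gap is the weight estimate.} The claim that $\min(b_N,\lambda)$ increases with $N$ is false, because $s\mapsto\min(s,\lambda)$ is not operator monotone. Take $A=\mathrm{diag}(1,0)\le B=\bigl(\begin{smallmatrix}2&1\\1&1\end{smallmatrix}\bigr)$. The $(1,1)$ entry of $\min(B,1)$ is $(15-3\sqrt5)/10<1$, so $\min(A,1)\not\le\min(B,1)$. After scaling, such a pair occurs as $b_1\le b_2$ under your hypotheses: take $a_1=\lambda A$, $a_2=\lambda(B-A)/4$, and $\psi$ a small multiple of the trace on $M_2(\mathbb C)$. Normality only controls suprema of increasing nets, so it cannot be applied to $\min(b_N,\lambda)$ as you do.

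The step can be rescued in two ways, but both invoke more of Haagerup's theory of normal weights than is needed:
\begin{enumerate}
\item The sequence does converge strongly to $\min(b,\lambda)$, so Haagerup's weak* lower semicontinuity of normal weights would give the bound.
\item If you grant your unproved extension of $\psi$ to $b$ with $\psi(b)\le1$, then $\lambda(1-f)\le b$ already suffices, and the detour through $\min$ is unnecessary.
\end{enumerate}

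\textbf{The simple repair} is to truncate with an operator monotone function,
$g_\lambda(s)=\lambda s(\lambda+s)^{-1}=\lambda\bigl(1-(1+ts)^{-1}\bigr)$.
\begin{itemize}
\item Since inversion reverses order, $g_\lambda(b_N)$ genuinely increases, to $\lambda(1-B_t)$.
\item Since $g_\lambda(b_N)\le b_N$, we get $\psi(g_\lambda(b_N))\le1$.
\item Since $g_\lambda(s)>\lambda/2$ for $s>\lambda$, we get $1-f\le2(1-B_t)$, so normality yields $\psi(1-f)\le2/\lambda$.
\end{itemize}
Once this is done the unbounded $b$ is superfluous, and what remains is the paper's proof. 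It works only with the bounded decreasing limit $B_t$. It gets $\psi(1-B_t)\le t$ from $1-(1+tb_N)^{-1}\le tb_N$. It gets $\|a_k^{1/2}f\|^2\le2\|a_k^{1/2}B_ta_k^{1/2}\|\le2/(tk^2)$ from $f\le2B_t$ and $B_t\le(1+tk^2a_k)^{-1}$.

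Your fallback was one step from this, but it lacks the scale parameter. With $c_N=b_N(1+b_N)^{-1}$ and $f=\one_{[0,\theta]}(c)$ one only gets $\psi(1-f)\le\theta^{-1}\psi(c)\le\theta^{-1}$, which never becomes small. With $c_N=tb_N(1+tb_N)^{-1}$ instead, $\psi(c)\le t$ and the threshold can stay at $1/2$.
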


\begin{proof}
Fix $t>0$, set $A_m=\sum_{k=1}^m k^2a_k$, and put
$B_t=\operatorname*{s-lim}_m(1+tA_m)^{-1}$.
This is a decreasing strong limit, since $(A_m)$ is increasing, and we have $0\le B_t\le1$.
Since $1-(1+tA_m)^{-1}\le tA_m$, normality gives
\begin{equation}\label{gm-resolvent-weight}
 \psi(1-B_t)=\sup_m\psi\bigl(1-(1+tA_m)^{-1}\bigr)\le t.
\end{equation}
Let $f=\mathbf1_{[1/2,1]}(B_t)$. Then $1-f\le2(1-B_t)$ and
$f\le2B_t$, so $\psi(1-f)\le2t$.
For every $k$ and $m\ge k$, we have $tk^2a_k\le tA_m$,
and so  $(1+tA_m)^{-1}\le(1+tk^2a_k)^{-1}.$ 
Passing to the strong limit gives
$B_t\le(1+tk^2a_k)^{-1}$.
 Hence
\[
 \|a_k^{1/2}f\|^2
 =\|a_k^{1/2}fa_k^{1/2}\|
 \le2\|a_k^{1/2}B_ta_k^{1/2}\|
 \le\frac{2}{tk^2}.
\]
Choose $0<t<\varepsilon/2$. This proves both assertions.
\end{proof}

\begin{proposition}[Bounded local a.u.\ and order]\label{gm-order-local-au}
Let $M$ be a  von Neumann algebra, $\varphi$ a faithful normal semifinite weight on $M$, and $x_n,x\in M$ with $\sup_n\|x_n\|<\infty$. Then
\begin{equation}\label{gm-modulus-bridge}
 x_n\to x\text{ locally }\varphi\text{-a.u.}
 \quad\Longleftrightarrow\quad |x_n-x|\xrightarrow{o}0.
\end{equation}
We recall that local convergence means that for every projection $p$ with
$\varphi(p)<\infty$ and every $\epsilon>0$ there is a projection
$e\le p$ such that $\varphi(p-e)<\epsilon$ and
$\|(x_n-x)e\|\to0$. If $x_n,x$ are selfadjoint, these are equivalent to $x_n\xrightarrow{o} x$. Consequently this local a.u.\ condition is independent of the chosen faithful normal semifinite weight.
\end{proposition}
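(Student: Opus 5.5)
Put $y_n=x_n-x$ and $C=\sup_n\|y_n\|$. The proof splits into the two directions of \eqref{gm-modulus-bridge}, then the selfadjoint reduction, then weight independence.

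\emph{From order convergence to local a.u.} Suppose $h_n\downarrow0$ and $|y_n|\le h_n$. Since $|y_n|\le h_n$ with $h_n$ bounded by $\|h_1\|$, we have
\[
|y_n|^2\le\|h_1\|\,h_n,
\]
by \eqref{gm-order-square} applied with $y=|y_n|$. Fix $p$ with $\varphi(p)<\infty$ and $\varepsilon>0$. The functional $\eta(a)=\varphi(pap)$ is a normal positive functional. Because $h_n\downarrow0$ strongly, $\eta(h_n)\downarrow0$, so we may choose $n_k$ increasing with $k^2\eta(h_{n_k})\le 2^{-k}$.

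Apply Lemma~\ref{gm-resolvent-cutoff} in the corner $pMp$ with $\psi=\varphi|_{pMp}$ and $a_k=p\,h_{n_k}p$. This gives $f\le p$ with $\varphi(p-f)<\varepsilon$ and $\|h_{n_k}^{1/2}f\|\to0$. For $n\ge n_k$ we have $h_n\le h_{n_k}$, hence
\[
\|y_nf\|^2=\|f|y_n|^2f\|\le\|h_1\|\,\|fh_{n_k}f\|=\|h_1\|\,\|h_{n_k}^{1/2}f\|^2 .
\]
Letting $n\to\infty$ and then $k\to\infty$ gives $\|y_nf\|\to0$, which is local $\varphi$-a.u.\ convergence.

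\emph{From local a.u.\ to order convergence.} This is the harder direction: we must produce a decreasing majorant $h_n\downarrow0$ in $M$ without $\sigma$-finiteness. The natural candidate is the order-theoretic tail supremum
\[
h_n=\bigvee_{m\ge n}|y_m|,
\]
taken in $M_{\sa}$. That supremum need not exist in a nonmonotone-complete sense, so I would instead take $k_n=\inf$ over the set $U_n=\{u\in M_+:u\ge|y_m|\ \forall m\ge n\}$. This set is nonempty since it contains $C1$, and $k_n$ decreases. One cannot simply set $h_n=k_n$, because an infimum of majorants need not itself majorize. So I would construct majorants concretely from cutoffs.

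Take a finite-weight projection $p$, a cutoff $e\le p$ with $\varphi(p-e)<\delta$, and $\varepsilon>0$. For large $n$, $\|y_ne\|<\varepsilon$. Compress $|y_n|$ by $e$ and $1-e$ and use the operator inequality
\[
a\le 2(eae+(1-e)a(1-e))\qquad(a\ge0).
\]
Since $\|e|y_n|^2e\|<\varepsilon^2$, this gives
\[
|y_n|^2\le 2\varepsilon^2e+2C^2(1-e)
\]
for large $n$, and operator monotonicity of the square root controls $|y_n|$. Hence every element of the form $\sqrt2\,(\varepsilon e+C(1-e))$, with $e$ a good cutoff for $p$, majorizes the tail. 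The family of such majorants should have infimum $0$: by Lemma~\ref{small-weight-projections}, $e\to p$ strongly as $\delta\to0$, and then $p\to1$ via Lemma~\ref{ldct-merged:finite-projections}.

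The main obstacle is turning this downward-directed net of tail majorants into a genuine decreasing sequence $h_n\downarrow0$, indexed by $n$ alone. If $\varphi$ were $\sigma$-finite I would diagonalize countably. In general I would rely on the characterization of sequential order convergence by the existence of increasing lower and decreasing upper bounds, and on \cite[\S2.1]{CHW}, to reduce it to showing $\inf_n k_n=0$ with $k_n$ the infimum of tail majorants. The cutoff majorants show that $k_n\le\sqrt2(\varepsilon e+C(1-e))$ eventually, so $\inf_nk_n\le\sqrt2\,\varepsilon p+C'(1-p)$ for every good $p$, forcing $\inf_nk_n=0$.

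\emph{Selfadjoint case and weight independence.} For selfadjoint $x_n,x$, the remarks preceding Lemma~\ref{gm-resolvent-cutoff} already show that $x_n\xrightarrow{o}x$ is equivalent to $|y_n|\xrightarrow{o}0$. Finally, the right side of \eqref{gm-modulus-bridge} does not mention $\varphi$, so the local a.u.\ condition is independent of the choice of faithful normal semifinite weight.
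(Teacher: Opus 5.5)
Your forward direction (order convergence implies local $\varphi$-a.u.) is essentially the paper's argument: Lemma~\ref{gm-resolvent-cutoff} applied in $pMp$ to $ph_{n_k}p$, then monotonicity of $h_n$ together with $|y_n|^2\le\|h_1\|h_n$. The selfadjoint reduction and the weight-independence remark also follow the paper. The converse direction, however, has a genuine gap.

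First, $k_n=\inf U_n$ is not available. Each $|y_m|$ with $m\ge n$ is a lower bound for $U_n$. So if a greatest lower bound of $U_n$ existed, it would dominate every such $|y_m|$ and be their least upper bound. Thus $k_n$ exists precisely when the tail supremum $\bigvee_{m\ge n}|y_m|$ exists, which you had just rightly discarded. Your worry that an infimum of majorants ``need not itself majorize'' is therefore the wrong way round; the real problem is existence.

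Second, \cite[\S2.1]{CHW} does not reduce sequential order convergence to ``$\inf_n k_n=0$''. The characterization by increasing lower and decreasing upper bounds still requires a decreasing \emph{sequence} $b_n\ge|y_n|$ with infimum $0$, and that is exactly the object to be constructed.

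Third, your estimate $|y_n|\le\sqrt2\,(\varepsilon e+C(1-e))$ for large $n$ is correct, but these eventual majorants do not assemble into such a sequence. They are not even known to form a downward directed family. Decreasing majorants of this form need increasing good cutoffs, and $\mathcal E=\{e:\|y_ne\|\to0\}$ is not closed under joins: if $\operatorname{ran}e+\operatorname{ran}e'$ is not closed, then $\|y_ne\|\to0$ and $\|y_ne'\|\to0$ do not force $\|y_n(e\vee e')\|\to0$.

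The missing idea is a device that produces an \emph{increasing sequence} of good projections. The paper first shows $\bigvee\mathcal E=1$, using the local condition, faithfulness and semifiniteness of $\varphi$. Write $z_n=|y_n|$. Given countably many $e_j\in\mathcal E$, the element $a=\sum_j2^{-j}e_j$ satisfies $\|z_na\|\to0$. Its spectral projections $p_m=\mathbf 1_{[1/m,\infty)}(a)$ increase to $\bigvee_je_j$ and satisfy $\|z_np_m\|\le m\|z_na\|\to0$. If $\|z_np_m\|\le1/m$ for $n\ge N_m$, then $z_n\le C(1-p_m)+(2/m)1$ for those $n$. Defining $h_n$ blockwise on $N_m\le n<N_{m+1}$ then gives $z_n\le h_n\downarrow0$, provided $\bigvee_je_j=1$.

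Choosing countably many $e_j$ with join $1$ needs a faithful normal state, that is, $\sigma$-finiteness of $M$ (not of $\varphi$). For general $M$ the paper decomposes $1=\sum_\alpha q_\alpha$ into $\sigma$-finite projections commuting with every $z_n$, using the countable group generated by the unitaries $\exp(iz_n/(C+1))$. It runs the construction in each $q_\alpha Mq_\alpha$ with $a=\sum_j2^{-j}q_\alpha e_jq_\alpha$, and then sums the corner majorants. This summation is legitimate because $z_n=\sum_\alpha q_\alpha z_nq_\alpha$.

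Your phrase ``diagonalize countably'' points at the $\sigma$-finite step. Without the spectral-projection trick it is not a proof even there, and without a reducing decomposition it does not reach arbitrary $M$.
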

\begin{proof}
Put $y_n=x_n-x$, $z_n=|y_n|$, and $C=\sup_n\|z_n\|$. For every projection $e$,
\[
 \|y_ne\|^2=\|ey_n^*y_ne\|=\|ez_n^2e\|=\|z_ne\|^2.
\]
It therefore suffices to treat the positive sequence $z_n$.

Suppose $0\le z_n\le h_n\downarrow0$. Fix a finite-weight projection $p$. The map $a\mapsto\varphi(pap)$ is bounded and normal, so $\varphi(ph_np)\to0$. Select $n_k\uparrow\infty$ with $\sum k^2\varphi(ph_{n_k}p)\le1$. Apply Lemma~\ref{gm-resolvent-cutoff} inside $pMp$ to $ph_{n_k}p$. This gives $e\le p$, $\varphi(p-e)<\epsilon$ and $\|eh_{n_k}e\|\to0$. By monotonicity $\| e h_n e \| \to 0$. Since $z_n^2\le Cz_n\le Ch_n$, we get $\|z_ne\|\to0$.

The converse is implicit in \cite{CHW} in the $\sigma$-finite case.
To do the general case, let $\mathcal E$ be the set of projections $e$ with $\|z_ne\|\to0$. Local convergence and Lemma~\ref{small-weight-projections} imply $\bigvee\mathcal E=1$.
Indeed, fix a projection
$p$ with $\varphi(p)<\infty$, and put
\[
q=\bigvee\{e\in\mathcal E:e\le p\}.
\]
By local $\varphi$-a.u.\ convergence, for every $\varepsilon>0$
there is an $e\in\mathcal E$ with $e\le p$ and
$\varphi(p-e)<\varepsilon$. Since $e\le q\le p$, we have
\[
0\le\varphi(p-q)\le\varphi(p-e)<\varepsilon.
\]
Thus $\varphi(p-q)=0$, and faithfulness gives $q=p$.
Hence every finite-weight projection is a supremum of
projections in $\mathcal E$.  By semifiniteness, the finite-weight projections themselves have supremum $1$.
 Consequently $\bigvee\mathcal E=1$.

We construct orthogonal $\sigma$-finite projections reducing
the sequence $(z_n)$. Put
$u_n=\exp(iz_n/(C+1))$, and let $\Gamma$ be the group of finite
words in the $u_n$ and their inverses. This group is countable.
Since $\sigma(z_n/(C+1))\subset[0,1)$, continuous functional
calculus gives $z_n=(C+1)\operatorname{Arg}(u_n)$, using the
continuous argument function on the spectral arc. Thus a
projection commuting with every $u_n$ also commutes with every
$z_n$.

Let $r$ be a nonzero projection commuting with $\Gamma$, and
choose a normal state $\rho$ with support $p_0\le r$. Set
\[
 q=\bigvee_{u\in\Gamma} \, u \, p_0 \, u^*\le r.
\]
Choose $c_u>0$ with $\sum_{u\in\Gamma}c_u=1$. The normal state
$\eta=\sum_{u\in\Gamma}c_u\rho(u^*\,\cdot\,u)$ has support $q$:
a projection is $\eta$-null exactly when it is null for every
summand. Hence $qMq$ admits a faithful normal state and is
$\sigma$-finite. Moreover, for $v\in\Gamma$,
\[
 vqv^*=\bigvee_{u\in\Gamma}(vu)p_0(vu)^*=q,
\]
by reindexing $\Gamma$. Thus $q$ commutes with every $z_n$.
Take a maximal orthogonal family $(q_\alpha)$ of such
$\sigma$-finite projections. Their sum is $1$, since otherwise
the nonzero complementary projection commutes with $\Gamma$
and the same construction gives another member, a contradiction.
(C.f.\ e.g.\ the proof of \cite[Proposition 3.6]{BHK}.)

Fix $q=q_\alpha$. The supports of $qeq$, $e\in\mathcal E$,
have join $q$. Let $\omega$ be a faithful normal state on
$qMq$. For each finite subset $F\subset\mathcal E$, put
\[
 r_F=\bigvee_{e\in F}s(qeq).
\]
Then $r_F\uparrow q$, so normality allows us to choose finite
sets $F_k\subset\mathcal E$ with
$\omega(q-r_{F_k})<2^{-k}$. Enumerate their union as $(e_j)$,
repeating terms if necessary. If
$r=\bigvee_j s(qe_jq)$, then
\[
 0\le\omega(q-r)\le\omega(q-r_{F_k})<2^{-k}
 \qquad\text{for every }k.
\]
Faithfulness therefore gives $r=q$. Set
\[
 a=\sum_{j\ge1}2^{-j}qe_jq,
 \qquad
 p_m=\mathbf{1}_{[1/m,\infty)}(a).
\]
Then $s(a)=q$ and $p_m\uparrow q$. Moreover, since $q$
commutes with every $z_n$ and $e_j\in\mathcal E$,
\[
 \|z_nqe_jq\|
   =\|qz_ne_jq\|
   \le \|z_ne_j\|\longrightarrow0
 \qquad\text{for each fixed }j.
\]
The tails are uniformly small: writing
$C=\sup_n\|z_n\|$, we have
\[
 \left\|z_n\sum_{j>J}2^{-j}qe_jq\right\|
 \le C\sum_{j>J}2^{-j}
 =C2^{-J}.
\]
Thus
\[
 \|z_na\|
 \le \sum_{j=1}^{J}2^{-j}\|z_nqe_jq\|+C2^{-J}.
\]
First letting $n\to\infty$ and then $J\to\infty$ gives
$\|z_na\|\to0$. Hence
$\|z_np_m\|\le m\|z_na\|\to0$ for each fixed $m$.

Choose strictly increasing $N_m$ such that $\|z_np_m\|\le1/m$ for $n\ge N_m$. In $qMq$, write $r_m=q-p_m$ and $z_n^{(q)}=qz_nq$. The decomposition
\[
 z_n^{(q)}=r_mz_n^{(q)}r_m+z_n^{(q)}p_m+p_mz_n^{(q)}r_m
\]
gives $0\le z_n^{(q)}\le Cr_m+(2/m)q$ when $n\ge N_m$. If $N_m\le n<N_{m+1}$ put $h_n^{(q)}=Cr_m+(2/m)q$, and use $(C+2)q$ to bound the initial terms. Then
\[
 0\le z_n^{(q)}\le h_n^{(q)}\downarrow0,
 \qquad h_n^{(q)}\le(C+2)q.
\]
The strong sums $h_n=\sum_\alpha h_n^{(q_\alpha)}$ exist, decrease to zero, and dominate $z_n$, since the $q_\alpha$ commute with  all $z_n$, and $\sum_\alpha \, z_n^{(q_\alpha)} = z_n$. This proves $z_n\xrightarrow{o}0$.

The selfadjoint assertion follows from \eqref{gm-order-square}
and the modulus comparison preceding Lemma~\ref{gm-resolvent-cutoff}.
\end{proof}

The compression-to-envelope estimate in this proof is done 
in \cite[Theorem 4.3(ii), p.~111]{CHW}. We added the reduction from our local
weight condition to the reducing corners. 

\begin{corollary} \label{gm-modulus-state}
Under the hypotheses of Proposition {\rm \ref{gm-order-local-au}}, put
$y_n=x_n-x$. If $x_n\to x$ locally $\varphi$-a.u., then every
sequentially normal state $\psi$ satisfies
\[
 \psi(|y_n|)\to0,\qquad \psi(y_n^*y_n)\to0,
 \qquad \psi(y_n)\to0.
\]
\end{corollary}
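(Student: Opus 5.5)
The plan is to reduce everything to sequential normality through order convergence, using Proposition~\ref{gm-order-local-au}. By that proposition, local $\varphi$-a.u.\ convergence of the norm-bounded sequence $(x_n)$ gives $|y_n|\xrightarrow{o}0$. So there are $h_n\in M_+$ with
\[
 0\le|y_n|\le h_n,\qquad h_n\downarrow0 .
\]
The infimum here is taken in $M$. Equivalently, $\|h_1\|1-h_n\uparrow\|h_1\|1$ is a bounded increasing sequence with supremum $\|h_1\|1$.

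Sequential normality of $\psi$, applied to this increasing sequence, gives $\psi(h_n)\downarrow0$. Positivity of $\psi$ then yields
\[
 0\le\psi(|y_n|)\le\psi(h_n)\longrightarrow0,
\]
which is the first assertion.

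For the second assertion, I will use $y_n^*y_n=|y_n|^2\le C|y_n|$, where $C=\sup_n\|y_n\|$. This follows from functional calculus for the positive operator $|y_n|$, whose norm is at most $C$. Hence $\psi(y_n^*y_n)\le C\psi(|y_n|)\to0$.

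For the third assertion, I will apply Cauchy--Schwarz for the positive functional $\psi$ (using $\psi(1)=1$):
\[
 |\psi(y_n)|^2\le\psi(1)\,\psi(y_n^*y_n)=\psi(y_n^*y_n)\to0 .
\]
This gives $\psi(x_n)\to\psi(x)$.

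There is an alternative route that avoids order convergence. Theorem~\ref{ldct-addon:theorem}(ii) gives $\sigma$-strong convergence, and Lemma~\ref{one-state} (the implication $(1)\Rightarrow(2)$) then gives $\psi(|y_n|)\to0$ directly. However, the proof of Theorem~\ref{ldct-addon:theorem}(ii) itself cites this corollary. To avoid circularity I will follow the order route, which depends only on Proposition~\ref{gm-order-local-au} and the definition of sequential normality.

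The only step needing care is translating $h_n\downarrow0$ into $\psi(h_n)\to0$. Sequential normality is stated for increasing sequences, so I pass to the complements $\|h_1\|1-h_n$ and check that their supremum is $\|h_1\|1$ exactly because $\inf_n h_n=0$ in $M$. This is immediate, so I expect no real obstacle. The substantive work was already done in Proposition~\ref{gm-order-local-au}.
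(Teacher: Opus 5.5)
Your proposal is correct and follows the paper's proof: take $0\le|y_n|\le h_n\downarrow0$ from Proposition~\ref{gm-order-local-au}, use sequential normality to get $\psi(h_n)\to0$, bound $\psi(y_n^*y_n)$ by $C\psi(|y_n|)\le C\psi(h_n)$, and finish with Cauchy--Schwarz. Your passage to the increasing complements $\|h_1\|1-h_n$, and your point that the order route avoids the circularity through Theorem~\ref{ldct-addon:theorem}(ii), are both sound.
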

\begin{proof}
Take $0\le|y_n|\le h_n\downarrow0$ and $C=\sup_n\|y_n\|$.
Then $\psi(|y_n|)\le\psi(h_n)\to0$,
$\psi(y_n^*y_n)\le C\psi(h_n)\to0$, and
$|\psi(y_n)|^2\le\psi(y_n^*y_n)$. 
\end{proof}


\begin{thebibliography}{99}

\bibitem{ABB}
C. A. Akemann and G. A. Bagheri--Bardi,
\emph{A stronger noncommutative Egoroff's theorem},
Nihonkai Math. J. \textbf{25} (2014), 65--68.

\bibitem{ACW}

 
K.~Abela, E.~Chetcuti, and H.~Weber,
\emph{On different modes of order convergence and some applications},
Positivity \textbf{26} (2022), article~14, doi:10.1007/s11117-022-00885-2, 
arXiv:2012.13752v1.
 

\bibitem{Arveson}
W.~B.~Arveson, \emph{Subalgebras of $C^*$-algebras},
Acta Math. \textbf{123} (1969), 141--224. doi:10.1007/BF02392388.

\bibitem{Bartoszewicz}
A.~Bartoszewicz, \emph{On almost uniform convergence of some sequences of
states on infinite tensor products of von Neumann algebras},
Ann. Univ. Mariae Curie-Sk\l odowska Sect.~A \textbf{46} (1992), 1--4.
\href{https://www.bc.umcs.pl/Content/31206/PDF/czas4050_46_1992_1.pdf}{Available online}.


\bibitem{Black}
B.~Blackadar,
\emph{Operator Algebras: Theory of $C^*$-Algebras and
von Neumann Algebras},
Encyclopaedia of Mathematical Sciences 122,
Springer, Berlin, 2006.

\bibitem{BC} D. P. Blecher and I. Choi,
{\em A solution to Haagerup's problem: Normality of weights is commutatively determined,}
Preprint (2026), arXiv:2610.35774.

\bibitem{BLd} D. P. Blecher and L. E. Labuschagne, {\em Quantum integrals,}
Preprint (2026), arXiv:2610.06771v2 (15 October 2026).

\bibitem{BdIII} D. P. Blecher and L. E. Labuschagne, {\em Quantum integrals III,} (tentative title), Preprint in progress (2026)

\bibitem{BGL} D. P. Blecher, S. Goldstein and L. E. Labuschagne, {\em Abelian von Neumann algebras, measure algebras and $L^\infty$-spaces,}  Expositiones Mathematicae {\bf 40} (2022), 758--818 (61 pages),  DOI: 10.1016/j.exmath.2021.11.005

\bibitem{BHK} M. Bohata, J. Hamhalter and O. F. K. Kalenda, {\em  On Markushevich bases in preduals of von Neumann algebras,} Israel J. Math.\ {\bf 214} (2016), 867--884. DOI: 10.1007/s11856-016-1365-y.

\bibitem{BLS}
C. Budde, L. Labuschagne, and C. Steyn,
\emph{Almost everywhere convergence for noncommutative spaces},
Banach J. Math. Anal. \textbf{16}, article 56 (2022). 

\bibitem{CH} 
E.~Chetcuti and J.~Hamhalter,
\emph{The order topology on duals of $C^*$-algebras and von Neumann
algebras}, Studia Mathematica \textbf{254} (2020), 219--236, doi:10.4064/sm190108-11-7, arXiv:1812.04035v1.


\bibitem{CHW} E. Chetcuti,  J. Hamhalter, and H.  Weber, {\em The order topology for a von Neumann algebra,} Studia Math.\ 
{\bf  230} (2015), 95--120.

\bibitem{CMlocal}
V. I. Chilin and M. A. Muratov,
\emph{Comparison of topologies on $*$-algebras of locally measurable operators},
Positivity \textbf{17} (2013), 111--132.
DOI: 10.1007/s11117-011-0152-6.

\bibitem{Douglas}
R. G. Douglas,
\emph{On majorization, factorization, and range inclusion of operators on Hilbert space},
Proc. Amer. Math. Soc. \textbf{17} (1966), 413--415.

\bibitem{GGLN}
A. Garcia Guinto, M. Lorentz, and B. Nelson,
\emph{Murray--von Neumann dimension for strictly semifinite weights},
arXiv:2405.15725. 
\bibitem{GLbook}  S. Goldstein and L. E. Labuschagne, {\em Noncommutative measures and $L^p$ and Orlicz spaces, with applications to quantum physics,} Oxford Graduate Texts in Mathematics, 33. Oxford University Press, Oxford, (2025).

\bibitem{GL}
M.~Goldstein and S.~Litvinov,
\emph{Banach principle in the space of $\tau$-measurable operators},
Studia Math.\ \textbf{143} (2000), no.~1, 33--41.
doi:10.4064/sm-143-1-33-41.

\bibitem{Haag} U. Haagerup, {\em Normal weights on \(W^*\)-algebras}, J.\ Functional Analysis {\bf 19} (1975), 302--317. DOI: 10.1016/0022-1236(75)90060-9

\bibitem{Hamhalter} J. Hamhalter,
\emph{Quantum Measure Theory}, Fundamental Theories of Physics, 134,
Kluwer Academic Publishers, Dordrecht, 2003.
DOI: 10.1007/978-94-017-0119-8.

\bibitem{HaagerupOVW} U. Haagerup, {\em Operator valued weights  in von Neumann algebras, I},
J. Funct. Anal. \textbf{32} (1979), 175--206. 


\bibitem{HJX}
U. Haagerup, M. Junge, and Q. Xu,
\emph{A reduction method for noncommutative $L_p$-spaces and applications}, Trans. Amer. Math. Soc. \textbf{362}(4)(2010), 2125--2165.


\bibitem{HJKN} B. Hayes, D. Jekel, S. Kunnawalkam Elayavalli, B. Nelson, {\em General solidity phenomena and anticoarse spaces for type III$_1$ factors,} Preprint (2024), to appear J.\ Functional Analysis.

\bibitem{HJP} E. Hensz, R. Jajte and A. Paszkiewicz,
{\em The bundle convergence in von Neumann algebras and their \(L_2\)-spaces,}
Studia Mathematica {\bf 120} (1996), 23--46.

\bibitem{Hiai} F.\ Hiai, \emph{Concise lectures on selected topics of von Neumann algebras},
\href{https://arxiv.org/abs/2004.02383v1}{arXiv:2004.02383v1} (2020).

\bibitem{Hiaibk} F.\ Hiai, {\em Lectures on selected topics in von Neumann algebras,} EMS Series of Lectures in Mathematics. EMS Press, Berlin, 2021,

 \bibitem{Jaj} R. Jajte, {\em Strong limit theorems in noncommutative probability,} Lecture Notes in Math., 1110, Springer, Berlin-Heidelberg-New York-Tokyo, 1985.  
 
\bibitem{Lance} E. C. Lance,
\emph{Ergodic theorems for convex sets and operator algebras},
Invent. Math. \textbf{37} (1976), 201--214.
DOI: 10.1007/BF01390319.

 \bibitem{LGM}
B. Le Gac and F. M\'oricz,
\emph{Beppo Levi and Lebesgue type theorems for bundle
convergence in noncommutative $L_2$-spaces},
in \emph{Recent Advances in Operator Theory and Related Topics},
Operator Theory: Advances and Applications, vol.~127,
Birkh\"auser, Basel, 2001, pp.~447--464.
 
\bibitem{MKX}
M. S. Moslehian, M. Kian, and Q. Xu,
\emph{Positivity of $2\times2$ block matrices of operators}, Banach J.\ Math.\ Analysis {\bf 13} (2019), 726--743, arXiv:1904.08680.



\bibitem{Novikov} A. Novikov, \(L_1\)-\emph{space for a positive operator affiliated with von Neumann algebra,} Positivity {\bf 21} (2017), 359--375, DOI: 10.1007/s11117-016-0422-4

\bibitem{Pasz} A. Paszkiewicz, {\em Convergences in \(W^*\)-algebras,} J.\ Funct.\ Analysis {\bf 69} (1986), 143--154.

\bibitem{Petz}
D.~Petz, \emph{Almost uniform convergence on the predual of a von Neumann
algebra and an ergodic theorem}, in \emph{Aspects of Positivity in Functional
Analysis}, North-Holland Mathematics Studies, vol.~122, North-Holland,
Amsterdam, 1986, pp.~247--254. doi:10.1016/S0304-0208(08)71964-9.

\bibitem{Tak1} M. Takesaki, \emph{Theory of Operator Algebras I},
 Springer-Verlag, New York, 1979. 

\bibitem{TakesakiII}
M. Takesaki, \emph{Theory of Operator Algebras II},
Encyclopaedia of Mathematical Sciences, vol.~125,
Springer, Berlin, 2003.


\end{thebibliography}
\end{document}